\theoremstyle{definition}
\newtheorem{definition}{Definition}[section]
\newtheorem{remark}[definition]{Remark}
\newtheorem{example}[definition]{Example}
\newtheorem{examples}[definition]{Examples}
\theoremstyle{theorem}
\newtheorem{theorem}[definition]{Theorem}
\newtheorem{lemma}[definition]{Lemma}
\newtheorem{corollary}[definition]{Corollary}
\newcommand{\ifff}{{\em i\,f\,{f}\;  }}
\newcommand{\supp}{\ensuremath{{\rm{supp}}}}
\newcommand{\card}{\ensuremath{{\rm{card}}}}
\newcommand{\dom}{\ensuremath{{\rm{dom}}}}
\newcommand{\ran}{\ensuremath{{\rm{ran}}}}
\newcommand{\st}{\ensuremath{{{\rm{st}}}}}
\newcommand{\emb}{\ensuremath{\hookrightarrow}}
\newcommand{\rest}{\ensuremath{\upharpoonright}} 
\newcommand{\conv}{\ensuremath{\circledast}}
\newcommand{\Diff}{\ensuremath{{\rm{Diff}}}}
\begin{document}
\title{Full Algebra of Generalized Functions and Non-Standard Asymptotic Analysis}
\author{Todor D. Todorov*\\ 
                        Mathematics Department\\                
                        California Polytechnic State University\\
                        San Luis Obispo, California 93407, USA\\
																							(ttodorov@calpoly.edu)\\
Hans Vernaeve*\\\
Unit for Engineering Mathematics\\\
University of Innsbruck, A-6020 Innsbruck, Austria\\
(hans.vernaeve@uibk.ac.at)}
\date{}
\maketitle

\begin{abstract} We construct an algebra of generalized functions endowed with a canonical embedding of the
space of Schwartz distributions. We offer a
solution to the problem of multiplication of Schwartz distributions similar to but different from Colombeau's
solution. We show that the set of scalars of our algebra is
an algebraically closed field unlike its counterpart in Colombeau theory, which is a ring with
zero divisors. We prove a Hahn-Banach extension principle which does not
hold in Colombeau theory. We establish a connection between our theory with non-standard analysis and thus
answer, although indirectly, a question raised by J.F. Colombeau. This article provides a bridge between
Colombeau theory of generalized functions and non-standard analysis.
\end{abstract}

\noindent *Both authors were partly supported by START-project Y237 of the Austrian Science Fund.
The second author was also supported by research grant M949 of Austrian Science Fund.

Mathematics Subject Classification. Primary: 46F30; Secondary: 46F10, 03H05, 46S20, 46S10, 35D05.

Key words: Schwartz distributions, generalized functions, Colombeau algebra, multiplication of distributions,
non-standard analysis, infinitesimals, Robinson valuation field,
ultra-metric, Hahn-Banach theorem.
\section{Introduction}\label{S: Introduction}

\quad In the early 70's, A. Robinson introduced a real closed, non-archimedean field 
${^\rho\mathbb{R}}$ (Robinson~\cite{aRob73}) as a factor ring of non-standard numbers  in
$^*\mathbb{R}$ (Robinson~\cite{aRob66}). The field 
${^\rho\mathbb{R}}$ is known as Robinson field of  asymptotic numbers (or Robinson valuation field), because
it is a natural framework of the classical asymptotic analysis  (Lightstone \& Robinson~\cite{LiRob}). 
Later W.A.J. Luxemburg~\cite{wLux} established a connection between $^\rho\mathbb{R}$ and $p$-adic analysis
(see also the beginning of  Section~\ref{S: A Hahn-Banach Like Theorem for
Asymptotic Functionals}  in this article). Li Bang-He~\cite{leBangHe78} studied the connection between  
$^\rho\mathbb{R}$ and the analytic representation of Schwartz distributions, and V. Pestov~\cite{vPes} involved
the field
$^\rho\mathbb{R}$ and similar constructions in the theory of Banach spaces.  More recently, it was shown
that the field $^\rho\mathbb{R}$ is isomorphic to a particular Hahn field of generalized power series
(Todorov \& Wolf~\cite{TodWolf}). The algebras
$^\rho\mathcal{E}(\Omega)$ of
$\rho$-asymptotic functions were introduced in (Oberguggenberger \&
Todorov~\cite{OberTod98}) and studied in Todorov~\cite{tTod99}. It is a differential algebra over
Robinson's field
$^\rho\mathbb{C}$ containing a copy of the Schwartz distributions $\mathcal{D}^\prime(\Omega)$
(Vladimirov~\cite{vVladimirov}). Applications of $^\rho\mathcal{E}(\Omega)$ to partial differential
equations were presented in Oberguggenberger~\cite{mOber95}. We sometimes refer to the
mathematics associated directly or indirectly with the fields
${^\rho\mathbb{R}}$ as {\bf non-standard asymptotic analysis}.  

	\quad On the other hand, in the early 80's,  J.F. Colombeau developed his theory of
{\em new generalized functions} without any connection, at least initially, with non-standard analysis
(Colombeau~\cite{jfCol84a}-\cite{jfCol92}). This theory is known as \textbf{Colombeau theory} or 
\textbf{non-linear theory of generalized functions} because it solves the {\em problem of the
multiplication of Schwartz distributions}.  Here is a \textbf{summary of Colombeau
theory} presented in axiomatic like fashion: Let $\mathcal{T}^d$ denote
the usual topology on $\mathbb{R}^d$ and let
$G$ be an open set of  $\mathbb{R}^d$. A set 
$\mathcal{G}(G)$ is called a \textbf{special} \textbf{algebra of generalized functions on $G$} (of
Colombeau type) if there exists a family
$\mathcal{G}=:\{\mathcal{G}(\Omega)\}_{\Omega\in\mathcal{T}^d}$ (we use $=:$ for ``equal by
definition'') such that:
\begin{enumerate}
\item Each $\mathcal{G}(\Omega)$ is a \textbf{commutative differential ring}, i.e. $\mathcal{G}(\Omega)$ is a
commutative ring supplied with partial derivatives $\partial^\alpha, \alpha\in\mathbb{N}_0^d$ (linear
operators obeying the chain rule). Here $\mathbb{N}_0=\{0, 1, 2,\dots\}$. Let $\overline{\mathbb{C}}$ denote
the \textbf{ring of generalized scalars} of the family 
$\mathcal{G}$ defined as the set of the functions in $\mathcal{G}(\mathbb{R}^d)$ with
zero gradient. Each
$\mathcal{G}(\Omega)$ becomes a
\textbf{differential algebra over the ring}
$\overline{\mathbb{C}}$ (hence, the terminology {\em Colombeau algebras}, for short).  

\item The ring of generalized scalars $\overline{\mathbb{C}}$ is of the form 
$\overline{\mathbb{C}}=\overline{\mathbb{R}}\oplus i\overline{\mathbb{R}}$, where 
$\overline{\mathbb{R}}$ is a partially ordered real ring, which is a proper extension of
$\mathbb{R}$. ({\em Real ring} means a ring with the property that $a_1^2+a_1^2+\dots+a_n^2=0$
implies $a_1=a_2=\dots=a_n=0$). The formula $|x+iy|=\sqrt{x^2+y^2}$ defines an absolute value on $\overline{\mathbb{C}}$. 
Consequently,
$\overline{\mathbb{C}}$ is a proper extension of 
$\mathbb{C}$ and both
$\overline{\mathbb{R}}$ and
$\overline{\mathbb{C}}$  contain non-zero infinitesimals. In Colombeau theory the infinitesimal
relation $\approx$ in $\overline{\mathbb{C}}$ is called \textbf{association}. 
\item   $\overline{\mathbb{C}}$ is
\textbf{spherically complete} under some ultra-metric $d_v$ on $\overline{\mathbb{C}}$ which agrees with
the  partial order in $\overline{\mathbb{R}}$ in the sense that  $|z_1|<|z_2|$ implies $d_v(0, z_1)\leq
d_v(0, z_2)$.
\item For every $f\in\mathcal{G}(\Omega)$ and every test function $\tau\in\mathcal{D}(\Omega)$  a
\textbf{pairing} $\left(f\left.\right|\tau\right)\in\overline{\mathbb{C}}$  is
defined (with the usual linear properties). Here $\mathcal{D}(\Omega)$
stands for the class of $\mathcal{C}^\infty$-functions from $\Omega$ to $\mathbb{C}$ with compact
supports. Let
$f, g\in\mathcal{G}(\Omega)$. The functions
$f$ and $g$ are called \textbf{weakly equal} (or {\em equal in the sense of generalized distributions}), in
symbol
$f\cong g$, if $\left(f\left.\right|\tau\right)=\left(g\left.\right|\tau\right)$ for all
$\tau\in\mathcal{D}(\Omega)$. Similarly,  $f$ and  $g$ are \textbf{weakly associated} (or simply,
{\em associated}, for short), in symbol
$f\approx g$, if
$\left(f\left.\right|\tau\right)\approx\left(g\left.\right|\tau\right)$ for all $\tau\in\mathcal{D}(\Omega)$,
where $\approx$ in the latter formula stands for the infinitesimal relation in $\overline{\mathbb{C}}$.
\item The family $\mathcal{G}$ is a {\bf sheaf}. That means that  $\mathcal{G}$ is
supplied with a restriction $\rest$ to an open set (with the usual sheaf properties, cf.  A.
Kaneko~\cite{aKan88}) such that
$\mathcal{T}^d\ni\mathcal{O}\subseteq\Omega$ and $f\in\mathcal{G}(\Omega)$
implies $f\rest\mathcal{O}\in\mathcal{G}(\mathcal{O})$. Consequently, each generalized function
$f\in\mathcal{G}(\Omega)$ has a {\bf support} $\supp(f)$ which is a closed subset of $\Omega$. 
\item Let  $\Omega, \Omega^\prime\in\mathcal{T}^d$ and $\Diff(\Omega^\prime,
\Omega)$ denote the set of  all 
$\mathcal{C}^\infty$-diffeomorphisms from
$\Omega^\prime$ to
$\Omega$ ($\mathcal{C}^\infty$-bijections with $\mathcal{C}^\infty$-inverse).  A
\textbf{composition} (change of variables) $f\circ\psi\in\mathcal{G}(\Omega^\prime)$ is defined for all
$f\in\mathcal{G}(\Omega)$ and all $\psi\in\Diff(\Omega^\prime,
\Omega)$.
\item For every $\Omega\in\mathcal{T}^d$ there exists an \textbf{embedding}
$E_\Omega:\mathcal{D}^\prime(\Omega)\to\mathcal{G}(\Omega)$  of the space of Schwartz distributions
$\mathcal{D}^\prime(\Omega)$ into  $\mathcal{G}(\Omega)$ such that: 
\begin{description}
\item[(a)]
$E_\Omega$ preserves the vector operations and
partial differentiation in
$\mathcal{D}^\prime(\Omega)$; 
\item[(b)]  $E_\Omega$ is sheaf-preserving, i.e.  $E_\Omega$
preserves the restriction to open sets. Consequently, $E_\Omega$
preserves the support of the Schwartz distributions. 
\item[(c)] $E_\Omega$ preserves the ring operations and partial differentiation in the class
$\mathcal{E}(\Omega)$. Here $\mathcal{E}(\Omega)$ stands for the class of 
$\mathcal{C}^\infty$-functions from $\Omega$ to $\mathbb{C}$ (where $\mathcal{E}(\Omega)$ is treated as a
subspace of $\mathcal{D}^\prime(\Omega)$).

\item[(d)] $E_\Omega$ preserves the pairing between $\mathcal{D}^\prime(\Omega)$ and the class of test
functions $\mathcal{D}(\Omega)$. Consequently, $E_\Omega$ preserves weakly the Schwartz multiplication in
$\mathcal{D}^\prime(\Omega)$ (multiplication by duality). 

\item[(e)]  $E_\Omega$ preserves the usual multiplication in the
class of continuous functions $\mathcal{C}(\Omega)$ up to functions in $\mathcal{G}(\Omega)$ that are weakly
associated to zero. 
\item[(f)]  $E_\Omega$ preserves weakly the composition with
diffeomorphisms (change of variables) in the sense that for every $\Omega, \Omega^\prime\in\mathcal{T}^d$,
every $T\in\mathcal{D}^\prime(\Omega)$ and every
$\psi\in\Diff(\Omega^\prime, \Omega)$ we have  $\left(E_\Omega(T)\circ\psi
\left.\right|\tau\right)=$$\left(E_{\Omega^\prime}(T\circ\psi)
\left.\right|\tau\right)$ for all test functions $\tau\in\mathcal{D}(\Omega^\prime)$. Here $T\circ\psi$
stands for the composition in the sense of the distribution theory (V. Vladimirov~\cite{vVladimirov}).
\end{description}

\item A special algebra is called a \textbf{full algebra of
generalized functions} (of Colombeau type) if the embedding
$E_\Omega$ is \textbf{canonical} in the sense that $E_\Omega$ can be uniquely determined by
properties  expressible only in terms which are  already involved in the definition of the family
$\mathcal{G}=:\{\mathcal{G}(\Omega)\}_{\Omega\in\mathcal{T}^d}$. 
\item A family $\mathcal{G}=\{\mathcal{G}(\Omega)\}_{\Omega\in\mathcal{T}^d}$ of algebras of
generalized functions (special or full)  is called
\textbf{diffeomorphism-invariant} if  $E_\Omega$ preserves the composition with
diffeomorphisms in the sense that  $E_\Omega(T)\circ\psi
=E_{\Omega^\prime}(T\circ\psi)$  for all $\Omega, \Omega^\prime\in\mathcal{T}^d$, all
$T\in\mathcal{D}^\prime(\Omega)$ and all $\psi\in\Diff(\Omega^\prime, \Omega)$.
\end{enumerate} 

	  We should mention that embeddings $E_\Omega$ (canonical or not) of the type described above are, in
a sense, {\em optimal} in view of the restriction imposed by the Schwartz impossibility results
(Schwartz~\cite{lSchwartz54}). For a discussion on the topic we refer to (Colombeau~\cite{jfCol92},
p. 8). Every family of algebras 
$\mathcal{G}(\Omega)$ (special or full) of the type described above offers a
\textbf{solution to the problem of the multiplication of Schwartz distributions} because the Schwartz distributions
can be multiplied within an associative and commutative differential algebra.

		 Full algebras of generalized functions were constructed first by J. F. Colombeau~\cite{jfCol84a}. Several
years later, in an attempt to simplify  Colombeau's original construction
J.F. Colombeau and A.Y. Le Roux~\cite{ColRoux88}  (and other authors, H. A. Biagioni~\cite{aBiag90}) defined
the so called {\em simple algebras} of generalized functions. Later M. Oberguggenberger~(\cite{mOber92},
Ch.III, \S 9) proved that the simple algebras are, actually, special algebras  in the sense explained above.
Diffeomorphism invariant full algebras were developed in (Grosser, Kunzinger, Oberguggenberger \&
Steinbauer~\cite{mGrosser at al 1}-\cite{mGrosser at al 2}). The sets of generalized scalars of all these algebras
are rings with zero divisors (Colombeau~\cite{jfCol84a}, pp. 136). The algebras of
$\rho$-asymptotic functions $^\rho\mathcal{E}(\Omega)$ \cite{OberTod98}, mentioned earlier, are special
algebras of Colombeau type with set of generalized scalars which is an
algebraically closed field. The counterpart of the embedding $E_\Omega$ in \cite{OberTod98} is denoted by
$\Sigma_{D,\Omega}$. It is certainly not canonical because the existence of 
$\Sigma_{D,\Omega}$ is proved in \cite{OberTod98} by saturation principle (in a non-standard analysis
framework) and then ``fixed by hand'' (see Remark~\ref{R: Non-Canonical Embedding}). Among other
things the purpose of this article is to construct a canonical embedding  $E_\Omega$ in
$^\rho\mathcal{E}(\Omega)$. We achieve this by means of the choice of a particular ultra-power
non-standard model (Section~\ref{S: Distributional Non-Standard Model}) and a particular choice of the
positive infinitesimal  $\rho$ within this model (Definition~\ref{D: Distributional Non-Standard
Model}, \#\ref{No: Canonical Infinitesimal}).

	 Colombeau theory has numerous applications to ordinary and partial differential equations,  the
theory of elasticity, fluid mechanics, theory of shock waves (Colombeau\cite{jfCol84a}-\cite{jfCol92}, 
Oberguggenberger~\cite{mOber92}), to differential geometry and relativity theory (Grosser, Kunzinger,
Oberguggenberger \& Steinbauer~\cite{mGrosser at al 2}) and, more recently, to quantum field
theory (Colombeau, Gsponer \& Perrot~\cite{jfCol07}).

	 Despite the remarkable achievement and promising applications  the theory of Colombeau 
has some features which can be certainly improved. Here are some of them: 
\begin{description}

	\item{(a)} The ring of generalized scalars
$\overline{\mathbb{C}}$ and  the algebras of generalized  functions $\mathcal{G}(\Omega)$ in Colombeau
theory are constructed as factor rings within   the ultrapowers $\mathbb{C}^I$ and
$\mathcal{E}(\Omega)^I$, respectively, for a particular  index set $I$.
The rings of nets such as $\mathbb{C}^I$ and $\mathcal{E}(\Omega)^I$ however (as well their subrings) lack
general theoretical principles similar to the axioms of $\mathbb{R}$ and $\mathbb{C}$, for example.
Neither $\mathbb{C}^I$ and
$\mathcal{E}(\Omega)^I$ are endowed with principles such as the transfer
principle or internal definition principle in non-standard analysis.  For that reason
Colombeau theory has not been able so far to get rid of the index set $I$ even after the factorization
which transforms $\mathbb{C}^I$ and $\mathcal{E}(\Omega)^I$ into $\overline{\mathbb{C}}$ and
$\mathcal{G}(\Omega)$, respectively. As a result Colombeau theory remains overly constructive: there
are too many technical parameters (with origin in the index set
$I$) and too many quantifiers in the definitions and theorems. To a certain extent, Colombeau theory
resembles what would be the real analysis if it was based not on the axioms of the
reals $\mathbb{R}$ but rather on Cauchy's construction of the real numbers as equivalence classes of
fundamental sequences in $\mathbb{Q}$.

\item{(b)} In a recent article  M. Oberguggenberger and H. Vernaeve~\cite{OberVernaeve07} defined the
concept of {\em internal sets} of  $\overline{\mathbb{C}}$ and $\mathcal{G}(\Omega)$ and showed that
theoretical principles similar to order completeness, underflow and overflow principles and
saturation principle for internal sets of
$\overline{\mathbb{C}}$ and $\mathcal{G}(\Omega)$ hold in Colombeau theory as well although  in
more restrictive sense compared with non-standard analysis.  However the sets of generalized scalars for
$\overline{\mathbb{R}}$ and
$\overline{\mathbb{C}}$ are still rings with zero divisors and  $\overline{\mathbb{R}}$ is only a partially
ordered (not totally ordered) ring. These facts lead to technical complications. For example 
Hahn-Banach extension principles do not hold in Colombeau theory (Vernaeve~\cite{hVernaeve}).
\end{description}

	In \textbf{this article}:  
\begin{description}
\item[(i)]  We construct a family of algebras of
generalized functions
$\widehat{\mathcal{E}(\Omega)^{\mathcal{D}_0}}$ called {\em asymptotic functions}
(Section~\ref{S: Asymptotic Numbers and Asymptotic Functions}).   We show that
$\widehat{\mathcal{E}(\Omega)^{\mathcal{D}_0}}$ are full algebras of Colombeau type 
(Section~\ref{S: A Solution to the Problem
of Multiplication of Schwartz Distributions}) in the sense explained above. Thus we offer a
solution to the problem of the multiplication of Schwartz distributions similar to but different 
from Colombeau's solution (Colombeau~\cite{jfCol84a}). Since the full algebras
are commonly considered to be more naturally connected to the theory of Schwartz distributions than
the special algebras, we look upon 
$\widehat{\mathcal{E}(\Omega)^{\mathcal{D}_0}}$ as an \textbf{improved alternative to the algebra of
$\rho$-asymptotic functions}
$^\rho\mathcal{E}(\Omega)$ defined in \cite{OberTod98}. 
\item[(ii)] We believe that our theory is a \textbf{modified and improved alternative to the original Colombeau
theory} for the following reasons: (a) The set of scalars $\widehat{\mathbb{C}^{\mathcal{D}_0}}$ of the algebra
$\widehat{\mathcal{E}(\Omega)^{\mathcal{D}_0}}$, called here {\em asymptotic numbers}, is an algebraically
closed field  (Theorem~\ref{T: Algebraic Properties}). Recall for comparison that
its counterpart in Colombeau theory  $\overline{\mathbb{C}}$ is a ring with zero divisors
(Colombeau~\cite{jfCol84a}, pp. 136).  (b) As a consequence we show that a Hahn-Banach extension
principle holds for linear functionals with values in
$\widehat{\mathbb{C}^{\mathcal{D}_0}}$ (Section~\ref{S: A Hahn-Banach Like
Theorem for Asymptotic Functionals}). This result does not have a counterpart in Colombeau theory
(Vernaeve~\cite{hVernaeve}). (c) At this stage the construction of
$\widehat{\mathcal{E}(\Omega)^{\mathcal{D}_0}}$ is already simpler than its counterpart in
Colombeau~\cite{jfCol84a}; our theory has one (regularization) parameter less.

\item[(iii)] Our next goal is to simplify our theory even more by establishing a connection with non-standard
analysis (Section~\ref{S: J.F. Colombeau's Non-Linear Theory of Generalized Functions and Non-Standard
Analysis}). For this purpose we construct a particular ultrapower
non-standard model called in this article the {\em distributional non-standard model} (Section~\ref{S:
Distributional Non-Standard Model}). Then we replace the rings of nets $\mathbb{C}^I$ and
$\mathcal{E}(\Omega)^I$ in Colombeau theory by the non-standard $^*\mathbb{C}$ and
$^*\mathcal{E}(\Omega)$, respectively and the regularization parameter $\varepsilon$ in Colombeau theory
by a particular (canonical) infinitesimal $\rho$ in $^*\mathbb{R}$. We show that  the field of asymptotic numbers
$\widehat{\mathbb{C}^{\mathcal{D}_0}}$ (defined in Section~\ref{S: Asymptotic Numbers and
Asymptotic Functions}) is isomorphic to a particular Robinson field ${^\rho\mathbb{C}}$
(Robinson~\cite{aRob73}). We also prove that the algebra of asymptotic
functions
$\widehat{\mathcal{E}(\Omega)^{\mathcal{D}_0}}$ (defined in Section~\ref{S: Asymptotic Numbers and
Asymptotic Functions}) is
isomorphic to a particular algebra of
$\rho$-asymptotic functions $^\rho\mathcal{E}(\Omega)$ introduced in (Oberguggenberger \&
Todorov\cite{OberTod98}) in the framework of non-standard analysis.
\item[(iv)] Among other things  this article provides a bridge between Colombeau theory of
generalized functions and non-standard analysis and we hope that it will be beneficial for both.
After all Robinson's non-standard analysis (Robinson~\cite{aRob66}) is historically at least several
decades older than Colombeau theory. A lot of work had been already done in the non-standard setting
on topics similar to those which appear in Colombeau theory. By establishing a connection with
non-standard analysis we answer, although indirectly, a question raised by J.F. Colombeau himself in one
of his ``research projects'' (Colombeau~\cite{jfCol92}, pp. 5).
\end{description}

	  Since the article establishes a connection between two different fields of mathematics, it is written mostly
with  two types of readers in mind.The readers with background in non-standard analysis might find in
Section~\ref{S: Ultrafilter on Test Functions}-\ref{S: A Solution to the Problem of Multiplication of Schwartz
Distributions} and Section~\ref{S: A Hahn-Banach Like Theorem for Asymptotic Functionals} (along with the
axiomatic summary of Colombeau theory presented above) a short introduction to the non-linear theory of
generalized functions. Notice however that in these sections we do not present the original Colombeau theory but
rather a modified (and improved) version of this theory. The reader without background in non-standard
analysis will find in Section~\ref{S: Distributional Non-Standard Model} a short introduction to the subject. The
reading of Sections~\ref{S: Ultrafilter on Test Functions}-\ref{S: A Solution to the Problem of Multiplication of
Schwartz Distributions} does not require background in non-standard analysis. 

\section{Ultrafilter on Test Functions}\label{S: Ultrafilter on Test Functions}

	\quad In this section we define a particular ultrafilter on the class of
test functions $\mathcal{D}(\mathbb{R}^d)$ closely related to Colombeau theory of
generalized functions (Colombeau~\cite{jfCol84a}). We shall often use the \textbf{shorter
notation
$\mathcal{D}_0$ instead of} $\mathcal{D}(\mathbb{R}^d)$. 

	\quad In what follows we denote by $R_\varphi$ the \textbf{radius of support} of
$\varphi\in\mathcal{D}(\mathbb{R}^d)$ defined by
\begin{equation}\label{E: RadiusSupport}
R_\varphi=
\begin{cases}
 \sup\{||x|| : x\in\mathbb{R}^d,\;  \varphi(x)\not= 0 \}, &\varphi\not=0,\\
1, 																																																										&\varphi=0. 
\end{cases}
\end{equation}

\begin{definition}[Directing Sets]\label{D: Directing Sets} We
define the directing sequence of sets $\mathcal{D}_0, \mathcal{D}_1, \mathcal{D}_2\dots$ by letting
$\mathcal{D}_0=\mathcal{D}(\mathbb{R}^d)$ and

\begin{align}
{\mathcal D}_{n}= \Big\{&
\varphi\in{\mathcal D}(\mathbb{R}^d):\notag\\
&\varphi \text{ is real-valued},\notag\\
&(\forall x\in\mathbb{R}^d)(\varphi(-x)=\varphi(x)),\notag\\
&R_\varphi\leq 1/n,\notag\\ 
&\int_{\mathbb{R}^d}\varphi(x)\, dx=1,\notag\\
&(\forall\alpha\in\mathbb{N}_0
^d)\left(1 \leq |\alpha|\leq n \Rightarrow\int_{\mathbb{R}^d}x^\alpha\varphi(x)\, dx=0\right),\notag\\ 
&\int_{\mathbb{R}^d}\left|\varphi(x)\right|\, dx \leq 1 + \frac{1}{n},\notag\\
&(\forall\alpha\in\mathbb{N}_0^d)\left( |\alpha|\leq n \Rightarrow\sup_{x\in\mathbb{R}^d}|{\partial^\alpha
\varphi(x)|}\leq (R_\varphi)^{-2(|\alpha| +d)}\right)\notag\; \Big\},\; n=1, 2, \dots.
\end{align}  
\end{definition}
\begin{theorem}[Base for a Filter]\label{T: Directing Sequence of D(Rd)} 
The directing sequence $(\mathcal{D}_n)$ is a base for a free filter on
$\mathcal{D}_0$ in the sense that
\begin{description}
\item[(i)] $\mathcal{D}(\mathbb{R}^d)=\mathcal{D}_0\supseteq \mathcal{D}_1\supseteq
\mathcal{D}_2\supseteq
\mathcal{D}_3\supseteq\dots$.
\item[(ii)] $\mathcal{D}_n\not=\varnothing$ for all $n\in\mathbb{N}$.
\item[(iii)] $\bigcap_{n=0}^\infty \mathcal{D}_n=\varnothing$.
\end{description}
\end{theorem}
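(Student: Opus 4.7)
The three parts are of quite different character. Parts (i) and (iii) follow almost immediately from the defining conditions of $\mathcal{D}_n$, while part (ii) --- the non-emptiness --- is the substantive step and requires an explicit construction.

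For (i), I would verify $\mathcal{D}_{n+1}\subseteq\mathcal{D}_n$ by running down the list of conditions in Definition~\ref{D: Directing Sets}. Each numerical bound in $\mathcal{D}_{n+1}$ is at least as strong as the corresponding one in $\mathcal{D}_n$: $R_\varphi \leq 1/(n+1) \leq 1/n$ and $\int|\varphi| \leq 1 + 1/(n+1) \leq 1 + 1/n$; the moment condition at level $n+1$ covers a strictly larger range of multi-indices than at level $n$; the derivative bound $\sup_x|\partial^\alpha\varphi(x)| \leq R_\varphi^{-2(|\alpha|+d)}$, whose right-hand side depends only on $\varphi$, imposed for $|\alpha|\leq n+1$ implies the same bound for $|\alpha|\leq n$; and reality and evenness are inherited trivially. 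For (iii), suppose $\varphi\in\bigcap_{n\geq 0}\mathcal{D}_n$. Then $R_\varphi\leq 1/n$ for every $n\geq 1$, forcing $R_\varphi = 0$. By the piecewise definition~\eqref{E: RadiusSupport}, the zero function has $R_\varphi = 1$; hence $\varphi$ is nonzero. But any nonzero continuous function is nonzero on an open set, so $R_\varphi>0$, a contradiction.

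For (ii), I would fix once and for all an even, nonnegative $\mathcal{C}^\infty$ bump $\rho\in\mathcal{D}(\mathbb{R}^d)$ with $\int\rho = 1$ and $\supp\rho\subseteq B(0,1)$, and then build $\varphi\in\mathcal{D}_n$ as a rescaled multi-scale superposition. Set $m = \lfloor n/2\rfloor$ (odd moments vanish automatically by evenness) and, for a parameter $\mu>1$, define
$$\widetilde\varphi(x) = \sum_{j=0}^{m} a_j\,\mu^{jd}\rho(\mu^j x),$$
which is even and smooth with $\supp\widetilde\varphi\subseteq B(0,1)$. The conditions $\int\widetilde\varphi = 1$ together with $\int x^{2k}\widetilde\varphi = 0$ for $1\leq k\leq m$ form a nonsingular Vandermonde system in $(a_j)_{j=0}^m$ with nodes $\mu^{-2j}$; by Lagrange interpolation its unique solution satisfies $a_m\to 1$ and $a_j\to 0$ for $j<m$ as $\mu\to\infty$. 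In particular $\sum_j|a_j|$ can be made as close to $1$ as we wish, so $\int|\widetilde\varphi| \leq \sum_j|a_j| \leq 1 + 1/n$ once $\mu$ is large enough. Finally I set $\varphi(x) = \lambda^{-d}\widetilde\varphi(x/\lambda)$ for some $\lambda > 0$: this preserves $\int\varphi = 1$, the vanishing moments, the $L^1$ norm and evenness, while $R_\varphi = \lambda$.

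\emph{The main obstacle} is reconciling the derivative bound with the support constraint through a single choice of $\lambda$. Direct differentiation gives $\sup|\partial^\alpha\varphi| \leq C_\alpha (\mu^m/\lambda)^{|\alpha|+d}$, with $C_\alpha = \sup|\partial^\alpha\rho|$ depending only on $\rho$ and $\alpha$, whereas the target is $R_\varphi^{-2(|\alpha|+d)} = \lambda^{-2(|\alpha|+d)}$. The exponent $-2$ (rather than $-1$) in the defining bound is exactly what saves the construction: it leaves a spare factor $\lambda^{-(|\alpha|+d)}$ that can absorb the $\mu$-dependent constants, provided $\lambda \leq \mu^{-m}C_\alpha^{-1/(|\alpha|+d)}$. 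The order of parameter choice is then: first fix $\mu$ large enough for the $L^1$ bound; then take $\lambda>0$ smaller than both $1/n$ and $\mu^{-m}\min_{|\alpha|\leq n}C_\alpha^{-1/(|\alpha|+d)}$. Both upper bounds are strictly positive, so such a $\lambda$ always exists and produces the required $\varphi_n\in\mathcal{D}_n$.
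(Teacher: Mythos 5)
Your proposal is correct, but it takes a genuinely different route from the paper's in both nontrivial parts. For (ii), the paper works first in one dimension and kills one moment per step through the recursion $\varphi_n(x)=a\varphi_{n-1}(x)+b\varphi_{n-1}(mx)$, tracking the $L^1$ and sup-norm constants through auxiliary classes $\mathcal{B}_{n,m,d}$, then passes to dimension $d$ by the tensor product $\varphi(x_1)\cdots\varphi(x_d)$, and finally rescales, using exactly the observation you isolate: the exponent $-2(|\alpha|+d)$ in the derivative bound leaves a spare power of $R_\varphi$ that absorbs all accumulated constants once the scale is small enough. Your one-shot multi-scale ansatz $\sum_{j=0}^{m}a_j\mu^{jd}\rho(\mu^j x)$ with coefficients determined by a Vandermonde system replaces both the induction and the tensorization (your reduction of the $d$-dimensional moment conditions to the scalar equations $\sum_j a_j\mu^{-2jk}=0$, odd degrees vanishing by evenness, is valid); it buys shorter bookkeeping at the price of justifying the asymptotics $a_m\to1$, $a_j\to0$ as $\mu\to\infty$, which indeed follow from the Lagrange formula $a_j=\prod_{i\neq j}t_i/(t_i-t_j)$ with nodes $t_j=\mu^{-2j}$ and which give the $1+1/n$ bound on $\int|\varphi|$. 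One harmless omission: your derivative estimate should carry the extra factor $\sum_j|a_j|\le 1+1/n$, absorbed by shrinking $\lambda$ slightly. For (iii), the routes differ more sharply: the paper ignores the support condition and shows that the moment conditions alone are contradictory --- vanishing of all moments of order at least one makes the Fourier transform an entire function that is constant (Paley--Wiener), so $\varphi$ would be a multiple of the delta distribution, hence $0$, contradicting $\int\varphi=1$ --- whereas you use only $R_\varphi\le 1/n$ for all $n$, forcing $R_\varphi=0$, impossible both for $\varphi=0$ (where $R_\varphi=1$ by convention) and for $\varphi\neq0$ (a nonzero continuous function is nonzero on an open set). Your argument is more elementary and fully sufficient for the theorem as stated; the paper's Fourier argument proves the stronger fact that no single test function can satisfy the moment conditions of all orders, which is the conceptual reason one must work with the whole directed family rather than a single mollifier.
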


\begin{proof}
  (i) Clear.

  (ii)  Let
$\varphi_0\in\mathcal{D}(\mathbb{R})$ be the test function
\[
\varphi_0(x)=
\begin{cases}
\frac{1}{c}\exp({-\frac{1}{1-x^2}}),&-1\leq x\leq 1,\\
0,&\text{otherwise},
\end{cases}
\]
where $c=\int_{-1}^1 \exp({-\frac{1}{1-x^2}})\,dx$. We let
$C_k=:\sup_{x\in\mathbb{R}}\left|\frac{d^k}{dx^k}\varphi_0(x)\right|$ for each $k\in\mathbb{N}_0$  and
also  $C_\alpha=C_{\alpha_1}\cdots C_{\alpha_d}$ for each multi-index $\alpha\in\mathbb{N}_0^d$. 
For each $n,m\in\mathbb{N}$ we let
\begin{align}
\mathcal{B}_{n,m,d}= \Big\{&\varphi\in\mathcal{D}(\mathbb{R}^d): \notag\\
&\varphi \text{ is real-valued},\notag\\
&\varphi(-x)=\varphi(x) \text{ for all } x\in\mathbb{R}^d,\notag\\
&R_\varphi=\sqrt d,\notag\\
&\int_{\mathbb{R}^d}\varphi(x)\, dx=1,\notag\\
&\int_{\mathbb{R}^d} x^\alpha\varphi(x)\,dx=0\mbox{ for all }\alpha\in\mathbb{N}^d\text{ with } 1\leq
|\alpha|\leq n,\notag\\ 
&\int_{\mathbb{R}^d} |\varphi(x)|\, dx\leq \exp\left(\frac{3d}{m-1}\right),\notag\\
&\sup_{x\in\mathbb{R}^d}\left|\partial^\alpha \varphi(x)\right|\leq C_\alpha (2^d
m^{|\alpha|+d})^{n} \mbox{ for all }\alpha\in\mathbb{N}_0^d\, \Big\}.\notag
\end{align}

	 {\bf Step 1.} We show that, if $m>2$, then $\mathcal {B}_{n,m,d}\not= \varnothing$.
Let first $d=1$. Then $\varphi_0\in\mathcal{B}_{0,m,1}$. By induction on $n$, let
$\varphi_{n-1}\in\mathcal {B}_{n-1,m,1}$. Define $\varphi_n(x) = a\varphi_{n-1}(x)+
b\varphi_{n-1}(mx)$, for some constants $a, b\in\mathbb{R}$ to be determined. Then
\[
\int_{\mathbb{R}}\varphi_n(x)\,dx = a + \frac{b}{m}\quad \text{and} \quad\int_{\mathbb{R}}
x^n\varphi_n(x)\,dx=\left(a+\frac{b}{m^{n+1}}\right)\int_{\mathbb{R}} x^n\varphi_{n-1}(x)\,dx.
\]
To ensure that $\varphi_n\in\mathcal B_{n,m,1}$, we choose $a + \frac{b}{m} = 1$ and
$a+\frac{b}{m^{n+1}}=0$. Solving for $a$, $b$, we find that $a=-\frac{1}{m^n-1}<0$ and
$b=\frac{m^{n+1}}{m^n-1}>0$. Since $a\not= 0$, also $R_{\varphi_n}=1$. Further, since
$\frac{1+x}{1-x}\leq 1 + 3x\leq \exp(3x)$ if $0\le x\le \frac{1}{3}$, we have
\begin{align}
&\int_{\mathbb{R}}\left|\varphi_n(x)\right|\, dx  \leq
\left(|a|+\frac{|b|}{m}\right)\int_{\mathbb{R}}\left|\varphi_{n-1}(x)\right|\, dx=\notag\\  
&\frac{m^n +1}{m^n -1}\int_{\mathbb{R}}\left|\varphi_{n-1}(x)\right|\,  dx\leq
\exp\left(\frac{3}{m^n}\right)\int_{\mathbb{R}}\left|\varphi_{n-1}(x)\right|\, dx,\notag
\end{align}
so inductively,
\[
\int_{\mathbb{R}}\left|\varphi_n(x)\right|\,  dx\leq\prod_{j=1}^n\exp\left(\frac{3}{
m^j}\right)\int_{\mathbb{R}}\left|\varphi_0(x)\right|\, dx\leq
\exp\Big(\sum_{j=1}^\infty\frac{3}{m^j}\Big) =\exp\Big({\frac{3}{m-1}}\Big).
\]
Further, $|a| + |b| m^k= \frac{m^{n+k+1}+1}{m^n-1}\leq 2m^{k+1}$ for $k\geq 0$,  $m> 2$ and $n\geq 1$.
Thus we have
\begin{align}
\sup_{x\in\mathbb{R}}\left|\frac{d^k}{dx^k}\varphi_n(x)\right|
&\leq (|a|+|b|m^k)\sup_{x\in\mathbb{R}}\left|\frac{d^k}{dx^k}\varphi_{n-1}(x)\right|\notag\\
&\le 2m^{k+1} C_k (2 m^{k+1})^{n-1}=C_k (2 m^{k+1})^n.\notag
\end{align}
Hence $\varphi_n\in{\mathcal B}_{n,m,1}$.
Now let $d\in\mathbb{N}$ and $\varphi\in\mathcal {B}_{n,m,1}$ arbitrary. We have
$\psi(x)=:\varphi(x_1)\cdots\varphi(x_d)\in\mathcal{B}_{n,m,d}$.

 {\bf Step 2.} Fix $d\in\mathbb{N}$. Let $n\geq 1$, let $M=\max\{1, \max_{|\alpha|\leq n}C_\alpha\}$,
let
$\psi\in\mathcal{B}_{n,9dn,d}$, let
$\varepsilon=\frac{1}{dM(18dn)^{dn}}$ and let
$\varphi(x)=\frac{1}{\varepsilon^d}\psi(x/\varepsilon)$. We show that
$\varphi\in \mathcal{D}_n$. If $||x||\geq 1/n\geq\varepsilon\sqrt{d}$, then $\varphi(x)=0$. Further, since
$\exp(x)\leq \frac{1}{1-x}$ if $0\leq x < 1$, we have
\[
\int_{\mathbb{R}^d}\left|\varphi(x)\right|\, dx=\int_{\mathbb{R}^d}\left|\psi(x)\right|\,
dx\leq\exp{\left(\frac{3d}{9dn-1}\right)}
\leq 1 + \frac{3d}{9dn-1-3d}\leq 1 + \frac{1}{n}.
\]
Finally, notice that  $R_\varphi=\varepsilon R_\psi=\varepsilon\sqrt{d}$. Thus for $|\alpha|\leq n$ we have
\begin{align}
\sup_{x\in\mathbb{R}^d}\left|\partial^\alpha \varphi(x)\right| 
&\leq
\varepsilon^{-|\alpha|-d}\sup_{x\in\mathbb{R}^d}\left|\partial^\alpha\psi(x)\right|
\leq \varepsilon^{-|\alpha|-d} C_\alpha(2^d(9dn)^{|\alpha| +d})^n\leq\notag\\
&\leq \varepsilon^{-|\alpha|-d} C_\alpha (18dn)^{dn(|\alpha|+d)}
=\varepsilon^{-|\alpha|-d} C_\alpha (\varepsilon dM)^{-|\alpha|-d}\leq\notag\\
&\leq C_\alpha M^{-1}(R_\varphi)^{-2(|\alpha| + d)}.\notag
\end{align}
Hence $\varphi\in\mathcal{D}_n$ as required.

		 (iii) Suppose (on the contrary) that there exists $\varphi\in\bigcap_{n=1}^\infty\,
\mathcal{D}_n$.  That means
(among other things) that $\int_{\mathbb{R}^d}\varphi(x)x^\alpha dx=0$ for all $\alpha\not=0$.
Thus we have $\partial^\alpha\widehat{\varphi}(0)=0$ for all  $\alpha\not=0$, where $\widehat{\varphi}$ denotes the 
Fourier transform of $\varphi$. It follows that $\widehat{\varphi}=C$ for some
constant $C\in\mathbb{C}$ since $\widehat{\varphi}$ is an  entire function on $\mathbb{C}^d$ by the
Paley-Wiener Theorem (Bremermann~\cite{hBremermann65},  Theorem 8.28, pp. 97). Hence by Fourier
inversion, $\varphi=(2\pi)^dC\delta\in\mathcal{D}(\mathbb{R}^d)$, where $\delta$ stands for the Dirac
delta function. The latter implies $C=0$, thus $\varphi=0$,
contradicting the  property $\int_{\mathbb{R}^d}\varphi(x)dx=1$ in the definition of $\mathcal{D}_n$. 
\end{proof}

	 In what follows 
$\mathfrak{c}=:\card{\mathbb{(R)}}$ and $\mathfrak{c}^+$ stands for the successor of
$\mathfrak{c}$. 

\begin{theorem}[Existence of Ultrafilter]\label{T: Existence of Ultrafilter}
There exists a $\mathfrak{c}^+$-good ultrafilter (maximal filter) $\mathcal{U}$ on
$\mathcal{D}_0=:\mathcal{D}(\mathbb{R}^d)$ such that
$\mathcal{D}_n\in\mathcal{U}$ for all $n\in\mathbb{N}_0$ 
(Definition~\ref{D: Directing Sets}).

\end{theorem}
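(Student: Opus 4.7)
The plan is to verify that the hypotheses of the classical existence theorem for $\kappa^+$-good countably incomplete ultrafilters are met, and then cite that theorem. No new technical construction is needed here; the content is entirely cardinal-arithmetic bookkeeping plus invocation of a well-known model-theoretic result (Keisler under GCH, Kunen in ZFC).

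First I would observe that by parts (i) and (ii) of Theorem~\ref{T: Directing Sequence of D(Rd)}, the sequence $(\mathcal{D}_n)_{n \in \mathbb{N}_0}$ is a descending chain of nonempty subsets of $\mathcal{D}_0$, hence a base for a (proper) filter $\mathcal{F}$ on $\mathcal{D}_0$, and by part (iii) this filter is free; equivalently, any ultrafilter extending $\mathcal{F}$ will be countably incomplete, since it will contain the countable family $\{\mathcal{D}_n\}$ whose intersection is empty.

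Next I would record the cardinality estimates. On the one hand, $\card(\mathcal{D}_0) = \card(\mathcal{D}(\mathbb{R}^d)) = \mathfrak{c}$, since $\mathcal{D}(\mathbb{R}^d)$ is a separable topological vector space of continuous functions. On the other hand, $\mathcal{F}$ has a countable filter base, so $\card(\mathcal{F}) \leq \mathfrak{c}$. Thus we are exactly in the setting $\card(\mathcal{D}_0) = \mathfrak{c}$ and $\mathcal{F}$ generated by at most $\mathfrak{c}$ sets.

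Finally I would invoke the Keisler--Kunen theorem: for every infinite cardinal $\kappa$ and every filter $\mathcal{F}$ on a set $I$ of cardinality $\kappa$ that is generated by at most $\kappa$ sets, $\mathcal{F}$ extends to a $\kappa^+$-good countably incomplete ultrafilter on $I$ (see, e.g., Chang--Keisler, \emph{Model Theory}, Thm.~6.1.4, or Kunen's original ZFC construction). Applying this with $\kappa = \mathfrak{c}$, $I = \mathcal{D}_0$, and $\mathcal{F}$ as above produces a $\mathfrak{c}^+$-good ultrafilter $\mathcal{U} \supseteq \mathcal{F}$; since each $\mathcal{D}_n$ lies in the base of $\mathcal{F}$, it lies in $\mathcal{U}$, as required. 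The only real obstacle is the invoked theorem itself, whose proof is a delicate transfinite construction refining monotone maps $[\kappa]^{<\omega} \to \mathcal{U}$ to additive ones using an independent family of functions on $I$; we do not reproduce it here.
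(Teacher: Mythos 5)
Your proposal is correct and follows essentially the same route as the paper: form the free filter $\mathcal{F}$ generated by the descending chain $(\mathcal{D}_n)$, note $\card(\mathcal{D}_0)=\mathfrak{c}$, and invoke the Chang--Keisler existence theorem for $\mathfrak{c}^+$-good ultrafilters extending $\mathcal{F}$. The extra observations you make (countable incompleteness, the countable filter base) are harmless refinements of the same argument.
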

\begin{proof}  We observe  that
$\card{(\mathcal{D}_0)}=\mathfrak{c}$. The existence of a (free) ultrafilter containing
all
$\mathcal{D}_n$ follows easily by Zorn's lemma since the set
$\mathcal{F}=\{A\in\mathcal{P}(\mathcal{D}_0) :  \mathcal{D}_n\subseteq A
\mbox{ for some } n\in\mathbb{N}_0  \}$ is clearly a free filter on $\mathcal{D}_0$. Here
$\mathcal{P}(\mathcal{D}_0)$ stands for the power set of $\mathcal{D}_0$. For the existence of a
$\mathfrak{c}^+$-good ultrafilter containing $\mathcal{F}$ we refer the reader to 
(Chang \& Keisler~\cite{CKeis}) (for a presentation we also mention the  Appendix in Lindstr\o
m~\cite{tLin}).
\end{proof} 

	 Let $\mathcal{U}$ be a $\mathfrak{c}^+$-good ultrafilter on
$\mathcal{D}_0=:\mathcal{D}(\mathbb{R}^d)$ containing all
$\mathcal{D}_n$. We shall \textbf{keep $\mathcal{U}$ fixed} to the end of this article.

	 For those readers who are unfamiliar with the used terminology we present a list
of the most important properties of
$\mathcal{U}$. The properties (1)-(3) below express the fact that $\mathcal{U}$ is a \textbf{filter},  the
property (1)-(4) express the fact that
$\mathcal{U}$ is a
\textbf{free filter}, the property (1)-(5) means that $\mathcal{U}$ is a \textbf{free ultrafilter} (maximal filter)
and (6) expresses the property of $\mathcal{U}$  to be \textbf{$c^+$-good}.

	\begin{lemma}[List of Properties of $\mathcal{U}$]\label{L: List of Properties of U} The ultrafilter
$\mathcal{U}$ is a set of subsets of $\mathcal{D}_0=\mathcal{D}(\mathbb{R}^d)$ such that 
$\mathcal{D}_n\in\mathcal{U}$ for all $n\in\mathbb{N}_0$ and such that: 
\begin{enumerate} 
\item\label{No: F1} If  $A\in\mathcal{U}$ and $B\subseteq \mathcal{D}_0$, then $A\subseteq B$ implies
$B\in\mathcal{U}$. 

\item\label{No: F2} $\mathcal{U}$ is \textbf{closed under finite intersections}. 

\item\label{No: F3} $\varnothing\notin \mathcal{U}$. 
 \item\label{No: Free} $\mathcal{U}$ is a
\textbf{free} filter in the sense that $\cap_{A\in\mathcal{U}}\, A=\varnothing$.

\item\label{No: Ultra} Let $A_k\in\mathcal{P}(\mathcal{D}_0),\, k=1, 2,\dots, n$, for some
$n\in\mathbb{N}$. Then
$\cup_{k=1}^n A_k\in\mathcal{U}$ implies $A_k\in\mathcal{U}$ for at least one $k$. Moreover, 
if the sets $A_k$ are mutually disjoint, then $\cup_{k=1}^n
A_k\in\mathcal{U}$ implies $A_k\in\mathcal{U}$ for exactly one $k$. In particular, for every set
$A\in\mathcal{P}(\mathcal{D}_0)$ exactly one of $A\in\mathcal{U}$ or
$\mathcal{D}_0\setminus A\in\mathcal{U}$ is true. Consequently,  $\mathcal{U}$ has the
\textbf{finite intersection property} (f.i.p.) since $F\notin\mathcal{U}$ and
$\mathcal{D}_0\setminus F\in\mathcal{U}$ for every finite set $F$ of 
$\mathcal{D}_0$.
\item\label{No: Good} $\mathcal{U}$ is \textbf{$\mathfrak{c}^+$-good} in the sense that for every set
$\Gamma\subseteq
\mathcal{D}_0$, with
$\card(\Gamma)\leq \mathfrak{c}$, and every reversal $R:
\mathcal{P}_\omega(\Gamma)\to\mathcal{U}$ there exists a strict reversal
$S: \mathcal{P}_\omega(\Gamma)\to\mathcal{U}$ such that $S(X)\subseteq R(X)$ for all 
$X\in\mathcal{P}_\omega(\Gamma)$. Here 
$\mathcal{P}_\omega(\Gamma)$ denotes the set of all  finite subsets of $\Gamma$. 
\end{enumerate}
\end{lemma}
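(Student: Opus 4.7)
The plan is to unpack this lemma as essentially a catalog of properties that follow directly from Theorem~\ref{T: Existence of Ultrafilter} together with the standard definitions of ``free,'' ``ultrafilter,'' and ``$\mathfrak{c}^+$-good.'' Properties (\ref{No: F1})--(\ref{No: F3}) are just the three axioms of a filter (upward closure, closure under finite intersections, and the non-triviality $\varnothing\notin\mathcal{U}$), so I would dispatch them in one line by quoting the fact that $\mathcal{U}$ is a filter on $\mathcal{D}_0$.

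For property (\ref{No: Free}) I would argue as follows. Since $\mathcal{D}_n\in\mathcal{U}$ for every $n\in\mathbb{N}_0$, we have $\bigcap_{A\in\mathcal{U}}A\subseteq\bigcap_{n\in\mathbb{N}_0}\mathcal{D}_n$, and by Theorem~\ref{T: Directing Sequence of D(Rd)}(iii) the right-hand side is empty. Hence $\bigcap_{A\in\mathcal{U}}A=\varnothing$, which is the definition of a free filter.

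For property (\ref{No: Ultra}) I would first prove the two-set case and then iterate. If $A\cup B\in\mathcal{U}$ but neither $A$ nor $B$ is in $\mathcal{U}$, then by maximality both $\mathcal{D}_0\setminus A$ and $\mathcal{D}_0\setminus B\in\mathcal{U}$; closure under finite intersections (property (\ref{No: F2})) would give $(\mathcal{D}_0\setminus A)\cap(\mathcal{D}_0\setminus B)=\mathcal{D}_0\setminus(A\cup B)\in\mathcal{U}$, and intersecting with $A\cup B\in\mathcal{U}$ would violate (\ref{No: F3}). Induction on $n$ extends this to finite unions. When the $A_k$ are pairwise disjoint, if two of them, say $A_i$ and $A_j$, both belonged to $\mathcal{U}$, then $A_i\cap A_j=\varnothing\in\mathcal{U}$, again contradicting (\ref{No: F3}); so exactly one belongs. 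The dichotomy $A\in\mathcal{U}$ or $\mathcal{D}_0\setminus A\in\mathcal{U}$ is the special case $A_1=A$, $A_2=\mathcal{D}_0\setminus A$. Finally, the f.i.p.\ claim follows since finite sets $F\subseteq\mathcal{D}_0$ cannot belong to a free ultrafilter: if $F=\{\varphi_1,\dots,\varphi_k\}\in\mathcal{U}$, the disjoint decomposition $F=\{\varphi_1\}\cup\dots\cup\{\varphi_k\}$ and the above would force some $\{\varphi_i\}\in\mathcal{U}$, contradicting freeness.

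Property (\ref{No: Good}) is built into the choice of $\mathcal{U}$ made in Theorem~\ref{T: Existence of Ultrafilter}, so I would simply state that this is the definition of $\mathfrak{c}^+$-good (and refer to Chang \& Keisler~\cite{CKeis} as already cited). The only mild subtlety in the whole proof is handling the disjoint-union refinement in (\ref{No: Ultra}), and even that reduces to the filter axiom $\varnothing\notin\mathcal{U}$; no step should involve substantive new work beyond what Theorem~\ref{T: Existence of Ultrafilter} has supplied.
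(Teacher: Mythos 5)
The paper states this lemma without any proof at all: it is offered as a catalogue of standard facts about the ultrafilter $\mathcal{U}$ fixed in Theorem~\ref{T: Existence of Ultrafilter}, with the $\mathfrak{c}^+$-goodness delegated to Chang \& Keisler~\cite{CKeis}. Your verifications are exactly the standard arguments the authors are implicitly invoking, and they are correct: (\ref{No: F1})--(\ref{No: F3}) are the filter axioms, freeness follows from $\bigcap_{A\in\mathcal{U}}A\subseteq\bigcap_n\mathcal{D}_n=\varnothing$ via Theorem~\ref{T: Directing Sequence of D(Rd)}(iii), and the finite-set argument (a singleton in $\mathcal{U}$ would put its element in every member of $\mathcal{U}$, contradicting freeness) is fine. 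The one step you assert rather than prove is that maximality yields the dichotomy ``$A\in\mathcal{U}$ or $\mathcal{D}_0\setminus A\in\mathcal{U}$,'' which you then use to get the finite-union property; since you later present the dichotomy as a \emph{consequence} of that property, you should make explicit that it is obtained independently from maximality (if $A\notin\mathcal{U}$ and $A$ met every member of $\mathcal{U}$, the filter generated by $\mathcal{U}\cup\{A\}$ would properly extend $\mathcal{U}$; hence some $F\in\mathcal{U}$ is disjoint from $A$ and so $\mathcal{D}_0\setminus A\supseteq F$ lies in $\mathcal{U}$). With that one line added, your write-up is a complete proof of everything the paper leaves unproved except \#\ref{No: Good}, which, as you say, is simply the defining property secured in Theorem~\ref{T: Existence of Ultrafilter}.
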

	
	 Recall that a function $R: \mathcal{P}_\omega(\Gamma)\to\mathcal{U}$ is called a \textbf{reversal}  if
$X\subseteq Y$ implies $R(X)\supseteq R(Y)$ for every $X, Y\in\mathcal{P}_\omega(\Gamma)$. A
\textbf{strict reversal} is a function  $S: \mathcal{P}_\omega(\Gamma)\to\mathcal{U}$ such that $S(X\cup
Y)= S(X)\cap S(Y)$ for every
$X, Y\in\mathcal{P}_\omega(\Gamma)$. 	It is clear that every strict reversal is a reversal (which justifies the
terminology). 
\begin{definition}[Almost Everywhere] \label{D: Almost Everywhere}  Let $P(x)$ be a predicate in one
variable defined on
$\mathcal{D}_0$ (expressing some property of the test functions). We say that $P(\varphi)$ holds almost
everywhere in $\mathcal{D}_0$ or, simply,
$P(\varphi)$ a.e. (where a.e. stands for ``almost everywhere''), if  $\{\varphi\in\mathcal{D}_0 :
P(\varphi)\}\in\mathcal{U}$.
\end{definition}

\begin{example}[Radius of Support]\label{Ex: Radius of Support} Let $R_\varphi$ be the support of
$\varphi$ (cf. (\ref{E: RadiusSupport})) and let $n\in\mathbb{N}$. Then $(R_\varphi\in\mathbb{R}_+\,
\&\, R_\varphi<1/n)$ a.e. because $\mathcal{D}_n\subseteq\{\varphi\in
\mathcal{D}_0 : R_\varphi\in\mathbb{R}_+\, \&\,
R_\varphi<1/n\}$ implies $\{\varphi\in
\mathcal{D}_0 : R_\varphi\in\mathbb{R}_+\, \&\,
R_\varphi<1/n\}\in\mathcal{U}$ by \#\ref{No: F1} of Lemma~\ref{L: List of Properties of U}.
\end{example}

	 The justification of  the terminology ``almost everywhere'' is based on the observation that  the mapping
$\mathcal{M}_\mathcal{U}:\mathcal{P}(\mathcal{D}_0)\to \{0,1\}$,
defined by $\mathcal{M}_\mathcal{U}(A)=1$ if $A\in\mathcal{U}$ and 
$\mathcal{M}_\mathcal{U}(A)=0$ if $A\notin\mathcal{U}$  is finitely additive probability measure on
$\mathcal{D}_0$. 
\section{$\mathcal{D}_0$-Nets and Schwartz Distributions}\label{S: D0-Nets and Schwartz
Distributions}
\begin{definition}[Index Set and Nets]\label{D: Index Set and Nets}  Let $\mathcal{D}_0, \mathcal{D}_1,
\mathcal{D}_2,\dots$ be the directing sequence defined in (Definition~\ref{D: Directing Sets}), where
$\mathcal{D}_0=\mathcal{D}(\mathbb{R}^d)$. Let $S$ be a set. The functions of the form $A:
\mathcal{D}_0\to S$ are called  \textbf{$\mathcal{D}_0$-nets} in
$S$  or, simply \textbf{nets in $S$} for short (Kelley~\cite{jKelley}, p. 65). We denote by $S^{\mathcal{D}_0}$ the set of all 
$\mathcal{D}_0$-nets in $S$. The space of test functions $\mathcal{D}_0$ is the
\textbf{index set} of the nets. If  $A\in S^{\mathcal{D}_0}$ is a net in $S$, we shall often write $A_\varphi$ and
$(A_\varphi)$ instead of $A(\varphi)$ and $A$, respectively. 
\end{definition}

		 In this section we present several technical lemmas about  $\mathcal{D}_0$-nets which are closely
related to the Schwartz theory of distributions and the directing sequence $(\mathcal{D}_n)$ (Section~\ref{S:
Ultrafilter on Test Functions}). Our terminology and notation in distribution theory is close to those in
Vladimirov~\cite{vVladimirov}. We start with several examples of  $\mathcal{D}_0$-nets.
\begin{examples}[Nets and Distributions]\label{Exs: Nets and Distributions} 
\begin{enumerate} 
\item\label{No: Nets in C} We denote by
$\mathbb{C}^{\mathcal{D}_0}$  the set of all
nets of the form $A: \mathcal{D}_0\to\mathbb{C}$. We shall
often write $(A_\varphi)$ instead of $A$ for the nets in
$\mathbb{C}^{\mathcal{D}_0}$. It is clear that
$\mathbb{C}^{\mathcal{D}_0}$ is a ring with zero divisors under the usual pointwise operations. Notice
that the nets in $\mathbb{C}^{\mathcal{D}_0}$ can be viewed as \textbf{complex valued functionals}
(not necessarily linear) on the space of test functions
$\mathcal{D}(\mathbb{R}^d)$. 
\item\label{No: Nets in E(Omega)}   Let $\Omega$ be an open subset of $\mathbb{R}^d$ and
$\mathcal{E}(\Omega)=:\mathcal{C}^\infty(\Omega)$. We denote by
$\mathcal{E}(\Omega)^{\mathcal{D}_0}$ the set of all nets of the form $f:
\mathcal{D}_0\to\mathcal{E}(\Omega)$. We shall often write $(f_\varphi)$ or $(f_\varphi(x))$  instead of $f$
for the nets in
$\mathcal{E}(\Omega)^{\mathcal{D}_0}$.

\item\label{No: Cut-off Net}  Let  $S$ be a set and  $\mathcal{P}(S)$ stand for the power set of $S$. We denote
by $\mathcal{P}(S)^{\mathcal{D}_0}$ the set of all nets of the form $\mathcal{A}:
\mathcal{D}_0\to\mathcal{P}(S)$. We shall often write $(\mathcal{A}_\varphi)$ instead of
$\mathcal{A}$ for the nets in $\mathcal{P}(S)^{\mathcal{D}_0}$.

\item\label{No: Cut-Off} Let $\mathcal{T}^d$ denote the \textbf{usual topology on} $\mathbb{R}^d$. For
every open set 
$\Omega\in\mathcal{T}^d$ we let
\begin{align}
&\Omega_\varphi=\left\{x\in\Omega\mid d(x,\partial\Omega)>R_\varphi \right\},\notag\\
&\widetilde{\Omega}_\varphi=\left\{x\in\Omega\mid d(x,\partial\Omega)>2R_\varphi \; \&\;
||x||<1/R_\varphi 
\right\},\notag
\end{align}
where $d(x,\partial\Omega)$ stands for the Euclidean distance between $x$ and the boundary
$\partial\Omega$ of $\Omega$ and $R_\varphi$ is defined by (\ref{E: RadiusSupport}). Let
$\chi_{\Omega,\varphi}:
\mathbb{R}^d\to
\mathbb{R}$ be the characteristic function of the set
$\widetilde{\Omega}_\varphi$. The \textbf{cut-off net}
$(C_{\Omega,\varphi})\in\mathcal{E}(\mathbb{R}^d)^{\mathcal{D}_0}$ associated with $\Omega$ is defined
by the formula 
$C_{\Omega,\varphi}=: \chi_{\Omega,\varphi}\star\varphi$, where  $\star$ stands for the usual
\textbf{convolution}, i.e. 
\[
C_{\Omega,\varphi}(x)=\int_{\widetilde{\Omega}_\varphi}\, \varphi(x-t)\, dt,
\]
for all $x\in\mathbb{R}^d$ and all $\varphi\in\mathcal{D}_0$. Notice that
$\supp(C_{\Omega,\varphi})\subseteq\Omega_\varphi$ (Vladimirov~\cite{vVladimirov},
Ch.I,
\S4, 6.T).
\item\label{No: phi-regularization} Let $T\in\mathcal{D}^\prime(\Omega)$ be a \textbf{Schwartz
distribution}  on
$\Omega$.  The $\varphi$-\textbf{regularization of}
$T$ is the net $(T_\varphi)\in\mathcal{E}(\Omega)^{\mathcal{D}_0}$ defined by the formula $T_\varphi=:
T\circledast\varphi$, where \textbf{$T\circledast\varphi$ is a short notation for}
$(C_{\Omega,\varphi}T)\star\varphi$  and  $\star$ stands (as before) for
the \textbf{usual convolution}. In other words, we have
\[
T_\varphi(x)=\left(T(t)\left.\right|C_{\Omega,\varphi}(t)\varphi(x-t)\right),
\]
for all $x\in\Omega$ and all $\varphi\in\mathcal{D}_0$. Here $(\,\cdot\,|\,\cdot\,)$ stands for the pairing
between  $\mathcal{D}^\prime(\Omega)$ and $\mathcal{D}(\Omega)$
(Vladimirov~\cite{vVladimirov}). 

\item\label{No: Loc} We denote by $L_\Omega:
\mathcal{L}_{loc}(\Omega)\to\mathcal{D}^\prime(\Omega)$ the
\textbf{Schwartz embedding} of $\mathcal{L}_{loc}(\Omega)$ into
$\mathcal{D}^\prime(\Omega)$ defined by  $L_\Omega(f)=T_f$. Here
$T_f\in\mathcal{D}^\prime(\Omega)$ stands for the (regular) distribution with kernel $f$, i.e.
$(T_f|\tau)=\int_\Omega f(x)\tau(x)\, dx$ for all $\tau\in\mathcal{D}^\prime(\Omega)$
(Vladimirov~\cite{vVladimirov}). Also, $\mathcal{L}_{loc}(\Omega)$ denotes the space of the locally
integrable (Lebesgue) functions from $\Omega$ to $\mathbb{C}$. Recall that $L_\Omega$
preserves the addition and multiplication by complex numbers. The restriction of $L_\Omega$ on
$\mathcal{E}(\Omega)$ preserves also the partial differentiation (but not the multiplication). We shall write
$f\circledast\varphi$ and
$f\star\varphi$ instead of
$T_f\circledast\varphi$ and $T_f\star\varphi$, respectively. Thus  for every
$f\in\mathcal{L}_{loc}(\Omega)$, every
$\varphi\in\mathcal{D}_0$ and every $x\in\Omega$ we have
\begin{equation}\label{E: Lloc}
(f\circledast\varphi)(x)=\int_{||x-t||< R_\varphi} f(t)C_{\Omega,\varphi}(t)\varphi(x-t)\, dt.
\end{equation}
\end{enumerate}
\end{examples}

		In what follows we shall often write $K\Subset\Omega$ to indicate that $K$ is a compact subset of $\Omega$.

\begin{lemma}[Localization]\label{L: Localization} Let $\Omega$ be (as before) an open set of
$\mathbb{R}^d$ and
$T\in\mathcal{D}^\prime(\Omega)$ be a Schwartz distribution. Then for every compact set
$K\subset\Omega$ there exists
$n\in\mathbb{N}_0$ such that for every $x\in K$ and every $\varphi\in\mathcal{D}_n$ we have:

{\bf (a)}  $C_{\Omega,\varphi}(x)=1$. 

{\bf (b)} $(T\circledast\varphi)(x)=(T\star\varphi)(x)$.

{\bf (c)} Consequently, $(\forall K\Subset\Omega)(\forall\alpha\in\mathbb{N}_0^d)(\exists
n\in\mathbb{N}_0)(\forall x\in
K)(\forall\varphi\in\mathcal{D}_n)$ we have $\partial^\alpha (T\circledast\varphi)(x)=(\partial^\alpha
T\circledast\varphi)(x)=(T\circledast\partial^\alpha\varphi)(x)$.

\end{lemma}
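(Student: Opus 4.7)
The plan is to prove (a), (b), (c) in sequence, at each step passing to a slightly enlarged compact set inside $\Omega$.

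For (a), fix $K\Subset\Omega$ and set $\delta:=d(K,\partial\Omega)>0$ and $M:=\sup_{x\in K}\|x\|<\infty$. For $x\in K$ the support of $t\mapsto\varphi(x-t)$ is the closed ball $\overline{B}(x,R_\varphi)$, and for $t$ in this ball we have $d(t,\partial\Omega)\geq\delta-R_\varphi$ and $\|t\|\leq M+R_\varphi$. Choosing $n$ so large that $1/n<\delta/3$ forces $\delta-R_\varphi>2R_\varphi$, and choosing it so that also $(M+1/n)/n<1$ forces $\|t\|<1/R_\varphi$; these are exactly the two defining conditions of $\widetilde{\Omega}_\varphi$. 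Hence $\overline{B}(x,R_\varphi)\subseteq\widetilde{\Omega}_\varphi$, and the normalization $\int_{\mathbb{R}^d}\varphi\,dx=1$ built into $\mathcal{D}_n$ yields $C_{\Omega,\varphi}(x)=\int_{\widetilde{\Omega}_\varphi}\varphi(x-t)\,dt=\int_{\mathbb{R}^d}\varphi(x-t)\,dt=1$.

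For (b), I would enlarge $K$ to a compact $K'\Subset\Omega$ with $K\subset\mathrm{int}(K')$, set $r:=d(K,\mathbb{R}^d\setminus K')>0$, and apply (a) to $K'$ to obtain $n_1$ with $C_{\Omega,\varphi}\equiv 1$ on $K'$ for every $\varphi\in\mathcal{D}_{n_1}$. Then for $n\geq\max(n_1,\lceil 1/r\rceil+1)$, $\varphi\in\mathcal{D}_n$, and $x\in K$, the support of $\varphi(x-\cdot)$ lies inside $K'$, so the two test functions $C_{\Omega,\varphi}(t)\varphi(x-t)$ and $\varphi(x-t)$ coincide. Consequently $(T\circledast\varphi)(x)=\left(T(t)\,\big|\,\varphi(x-t)\right)=(T\star\varphi)(x)$.

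For (c), apply (b) to a further-enlarged $K''\Subset\Omega$ with $K\subset\mathrm{int}(K'')$ to get an $n$ for which $T\circledast\varphi$ and $T\star\varphi$ coincide as smooth functions on an open neighborhood of $K$; differentiating pointwise on $K$ gives $\partial^\alpha(T\circledast\varphi)(x)=\partial^\alpha(T\star\varphi)(x)$. The classical distributional convolution identities $\partial^\alpha(T\star\varphi)=(\partial^\alpha T)\star\varphi=T\star(\partial^\alpha\varphi)$ are valid because $\varphi(x-\cdot)$ has compact support inside $\Omega$. A further application of (b) with $\partial^\alpha T$ in place of $T$—noting that the threshold $n$ in (b) depends only on $K$ and $\Omega$, not on $T$—converts $(\partial^\alpha T)\star\varphi$ back into $(\partial^\alpha T)\circledast\varphi$, and a parallel localization argument produces the remaining equality with $T\circledast(\partial^\alpha\varphi)$. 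The main obstacle is the careful nested choice $K\subset K'\subset K''$ with matching thresholds on $n$ at each level; once that bookkeeping is in place, everything reduces to the standard distributional convolution calculus.
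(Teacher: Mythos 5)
Your proposal is correct and follows essentially the same route as the paper's proof: for (a) you choose $n$ so that $3R_\varphi<d(K,\partial\Omega)$ together with a bound tied to $\sup_{x\in K}\|x\|$, which places $\overline{B}(x,R_\varphi)$ inside $\widetilde{\Omega}_\varphi$ and makes the cut-off integral equal to $1$ by the normalization $\int\varphi=1$; for (b) you enlarge $K$ to a compact neighborhood and apply (a); and for (c) you reduce to the classical identities $\partial^\alpha(T\star\varphi)=(\partial^\alpha T)\star\varphi=T\star\partial^\alpha\varphi$. You merely spell out a few details the paper delegates to Vladimirov or leaves implicit (e.g., that the threshold in (b) is independent of $T$), so there is nothing to correct.
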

\begin{proof} (a) Let $d(K,\partial\Omega)$ denote the Euclidean distance between $K$ and
$\partial\Omega$. It suffices to choose
$n\in\mathbb{N}$ such that
$3/n<d(K,\partial\Omega)$ and  $n>\sup_{x\in K}||x||+1$. It follows that  $3R_\varphi<d(K,\partial\Omega)$
for all
$\varphi\in\mathcal{D}_n$ because
$R_\varphi\leq 1/n$ holds by the definition of $\mathcal{D}_n$. Now (a) follows from the
property of the convolution (Vladimirov~\cite{vVladimirov}, Ch.I, \S4, 6.T).

(b) If $K\Subset\Omega$, then there exists $m\in\mathbb{N}$ such that $L=:\{t\in\Omega : d(t, K)\leq 1/m\}
\Subset\Omega$. Hence, by part (a), there exists $n\in\mathbb{N}$ (with $n\geq m$) such that
$C_{\Omega,\varphi}(x)\varphi(x-t)=\varphi(x-t)$ for all $x\in K$, all $t\in\Omega$ and all
$\varphi\in\mathcal{D}_n$.

	(c) follows directly from (b) bearing in mind that we have $\partial^\alpha (T\star\varphi)(x)=(\partial^\alpha
T\star\varphi)(x)=(T\star\partial^\alpha\varphi)(x)$.
\end{proof} 
\begin{lemma}[Schwartz Distributions]\label{L: Schwartz Distributions} Let $\Omega$ be an open set of 
$\mathbb{R}^d$ and $T\in\mathcal{D}^\prime(\Omega)$ be a Schwartz distribution. Then for every
compact set $K\subset\Omega$ and every multi-index
$\alpha\in\mathbb{N}_0^d$ there exist $m, n\in\mathbb{N}_0$ such that for every
$\varphi\in\mathcal{D}_n$ we have $
\sup_{x\in K}\left|\partial^\alpha(T\circledast\varphi)(x)\right|\leq (R_\varphi)^{-m}$.

\end{lemma}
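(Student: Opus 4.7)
The plan is to reduce the question to a convolution estimate for $\partial^\alpha T \star \varphi$, then exploit the local order of a Schwartz distribution, and finally absorb all constants into a power of $R_\varphi^{-1}$ using the growth condition built into the definition of $\mathcal{D}_n$.

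First I would apply Lemma~\ref{L: Localization}(c): pick $n_1 \in \mathbb{N}$ large enough that for all $\varphi \in \mathcal{D}_{n_1}$ and all $x \in K$,
\[
\partial^\alpha(T \circledast \varphi)(x) = (\partial^\alpha T \star \varphi)(x) = \bigl\langle T(t), (\partial^\alpha\varphi)(x-t)\bigr\rangle,
\]
where the pairing makes sense because $x-\supp(\varphi)$ sits inside a fixed compact set $L \Subset \Omega$ (e.g.\ $L = \{t \in \Omega : d(t,K) \leq 1/n_1\}$) for all $x \in K$ and all $\varphi \in \mathcal{D}_{n_1}$, provided $n_1$ is chosen so that $1/n_1 < d(K,\partial\Omega)/3$.

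Next I would use the fact that a Schwartz distribution has finite order on each compact set: there exist $C > 0$ and $k \in \mathbb{N}_0$ (depending only on $T$ and $L$) such that
\[
|\langle T, \psi\rangle| \leq C \sum_{|\beta|\leq k} \sup_{t \in L} |\partial^\beta \psi(t)|
\]
for every $\psi \in \mathcal{D}(\Omega)$ with $\supp(\psi) \subseteq L$ (cf.\ Vladimirov~\cite{vVladimirov}). Applying this with $\psi(t) = (\partial^\alpha\varphi)(x-t)$ gives
\[
|\partial^\alpha(T\circledast\varphi)(x)| \leq C \sum_{|\beta|\leq k} \sup_{y \in \mathbb{R}^d} |\partial^{\alpha+\beta}\varphi(y)|
\]
uniformly in $x \in K$.

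Now I would invoke the derivative bound in Definition~\ref{D: Directing Sets}: for $\varphi \in \mathcal{D}_n$ and $|\gamma| \leq n$, $\sup |\partial^\gamma \varphi| \leq R_\varphi^{-2(|\gamma|+d)}$. Choosing $n \geq \max(n_1, |\alpha|+k)$ yields
\[
|\partial^\alpha(T\circledast\varphi)(x)| \leq C \cdot N_k \cdot R_\varphi^{-2(|\alpha|+k+d)},
\]
where $N_k$ is the number of multi-indices $\beta$ with $|\beta|\leq k$. Finally, enlarging $n$ once more so that $R_\varphi \leq 1/n \leq 1/(CN_k)$ absorbs $CN_k$ into a single extra factor $R_\varphi^{-1}$, and the bound $\sup_{x\in K}|\partial^\alpha(T\circledast\varphi)(x)| \leq (R_\varphi)^{-m}$ holds with $m = 2(|\alpha|+k+d)+1$.

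The only real obstacle is bookkeeping: the integer $n$ must be chosen sequentially to satisfy the hypotheses of Lemma~\ref{L: Localization}, to activate the finite-order estimate in a fixed neighborhood $L$ of $K$, to make the order $n$ in $\mathcal{D}_n$ at least $|\alpha|+k$ so that the derivative bound applies, and finally to make $R_\varphi$ small enough to swallow the constant $CN_k$. Once $n$ and $L$ are chosen in that order, everything fits together and no further analytic input is needed.
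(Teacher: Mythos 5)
Your proposal is correct and takes essentially the same route as the paper's proof: localize via Lemma~\ref{L: Localization}, invoke the finite local order of the distribution on a compact neighborhood of $K$, bound the resulting derivatives of $\varphi$ by the estimate $\sup|\partial^\gamma\varphi|\leq (R_\varphi)^{-2(|\gamma|+d)}$ built into $\mathcal{D}_n$, and absorb the constant by one extra power of $R_\varphi$ using $R_\varphi\leq 1/n$. The only (immaterial) difference is that you pair $T$ against $\partial^\alpha\varphi$ while the paper pairs $\partial^\alpha T$ against $\varphi$, which changes the particular exponent $m$ but not the argument.
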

\begin{proof}  Let $K$ and $\alpha$ be chosen arbitrarily. By Lemma~\ref{L: Localization}, there exists
$q\in\mathbb{N}$ such that $\partial^\alpha(T\circledast\varphi)(x)=(\partial^\alpha T\star\varphi)(x)$
for all $x\in K$ and all $\varphi\in\mathcal{D}_q$. Let
$\mathcal{O}$ be an open relatively compact subset of $\Omega$ containing $K$ and let
$k\in\mathbb{N}$ be greater than $1/d(K,\partial\mathcal{O})$. We observe that
$\varphi_x\in\mathcal{D}(\mathcal{O})$ for all $x\in K$ and all $\varphi\in\mathcal{D}_k$, where
$\varphi_x(t)=:\varphi(x-t)$. On the other hand, there exist
$M\in\mathbb{R}_+$ and $b\in\mathbb{N}_0$ such that
$\left|\left(\partial^\alpha T\left.\right|\tau\right)\right|\leq M\sum_{|\beta|\leq
b}\sup_{t\in\mathcal{O}}\left|\partial^\beta\tau(t)\right|$ for all $\tau\in \mathcal{D}(\mathcal{O})$ by
the continuity of $\partial^\alpha T$. Thus $\left|(\partial^\alpha
T\star\varphi)(x)\right|=\left|\left(\partial^\alpha T\left.\right|\varphi_x(t)\right)\right|\leq
M\sum_{|\beta|\leq b}\sup_{t\in\mathbb{R}^d}\left|\partial^\beta\varphi(t)\right|$ for all $x\in K$ and all
$\varphi\in\mathcal{D}_k$. With this in mind we choose $m=2(b+d)+1$ and 
$n\geq\max\{q, k, C, b\}$, where $C=M \sum_{|\beta|\leq b}1$. Now, for every $x\in K$ and every
$\varphi\in\mathcal{D}_n$ we have
\[
\left|\partial^\alpha(T\circledast\varphi)(x)\right|\leq
M \sum_{|\beta|\leq b}(R_\varphi)^{-2(|\beta|+d)}        
\leq C(R_\varphi)^{-2(b+d)}\leq (R_\varphi)^{-m},
\]
as required, where the last inequality holds because $R_\varphi\leq 1/n$ by the definition of
$\mathcal{D}_n$ (Definition~\ref{D: Directing Sets}) and  $1/n\leq 1/C$ 
by the choice of $n$.
\end{proof} 
\begin{lemma}[$\mathcal{C}^\infty$-Functions]\label{L: Cinfinity-Functions} Let $\Omega$ be an open set
of $\mathbb{R}^d$ and $f\in\mathcal{E}(\Omega)$ be a $\mathcal{C}^\infty$-function. Then for
every compact set $K\subset\Omega$, every multi-index $\alpha\in\mathbb{N}_0^d$ and every
$p\in\mathbb{N}$ there exists $n\in\mathbb{N}_0$ such that for every $\varphi\in\mathcal{D}_n$ we have
\[
\sup_{x\in K}\left|\partial^\alpha(f\circledast\varphi)(x)-\partial^\alpha
f(x)\right|\leq (R_\varphi)^{p}.
\]

\end{lemma}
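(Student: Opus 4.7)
The plan is to reduce to ordinary convolution via Lemma~\ref{L: Localization}, then exploit the vanishing moments of test functions in $\mathcal{D}_n$ by means of a Taylor expansion of $\partial^\alpha f$ around $x$. Concretely, set $g = \partial^\alpha f \in \mathcal{E}(\Omega)$, enlarge $K$ to a compact set $K' \Subset \Omega$ with $K + \overline{B(0, 1/n_0)} \subseteq K'$ for some $n_0$, and use Lemma~\ref{L: Localization}(c) to pick $q$ so that
\[
\partial^\alpha (f\circledast\varphi)(x) = (g \star \varphi)(x) = \int_{\mathbb{R}^d} g(x-t)\,\varphi(t)\,dt
\]
for all $x \in K$ and all $\varphi \in \mathcal{D}_q$. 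Since $\int \varphi = 1$, this rewrites the quantity of interest as $\int [g(x-t) - g(x)]\,\varphi(t)\,dt$.

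Next, I would apply Taylor's theorem to $g$ at $x$ to order $p$, writing
\[
g(x-t) - g(x) = \sum_{1 \leq |\beta| \leq p} \frac{(-t)^\beta}{\beta!}\, \partial^\beta g(x) + R_p(x,t),
\]
with integral remainder $R_p(x,t) = \sum_{|\beta|=p+1} \frac{(-t)^\beta (p+1)}{\beta!} \int_0^1 (1-s)^p \partial^\beta g(x-st)\,ds$. Provided $n \geq \max\{n_0,p\}$ and $\varphi \in \mathcal{D}_n$, the segment $\{x - st : s \in [0,1]\}$ stays inside $K'$ (since $R_\varphi \leq 1/n \leq 1/n_0$), so
\[
|R_p(x,t)| \leq M\,R_\varphi^{p+1}, \qquad M \eqdef \sum_{|\beta|=p+1} \frac{1}{\beta!}\, \sup_{y \in K'} |\partial^\beta g(y)|.
\]
The polynomial part integrates to zero: for $1 \leq |\beta| \leq p \leq n$ the defining property of $\mathcal{D}_n$ gives $\int t^\beta \varphi(t)\,dt = 0$. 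Combined with $\int |\varphi(t)|\,dt \leq 1 + 1/n \leq 2$, this yields
\[
\sup_{x \in K}|\partial^\alpha(f\circledast\varphi)(x) - \partial^\alpha f(x)| \leq 2M\, R_\varphi^{p+1}.
\]

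Finally, I would choose $n \geq \max\{n_0, q, p, 2M+1\}$. For $\varphi \in \mathcal{D}_n$, $R_\varphi \leq 1/n \leq 1/(2M+1)$, whence $2M\,R_\varphi^{p+1} = (2M R_\varphi)\,R_\varphi^p \leq R_\varphi^p$, giving the desired estimate. The only real subtlety is the order in which parameters are picked: one must fix $p$ first to determine the Taylor order and thereby the constant $M$ (which depends on $f$, $\alpha$, $K'$, $p$ but not on $n$), and only afterwards inflate $n$ to both activate the moment conditions up to order $p$ and absorb the factor $2M$; so I do not expect any genuine obstacle beyond this careful bookkeeping.
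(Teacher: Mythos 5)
Your proposal is correct and follows essentially the same route as the paper's proof: reduce to an ordinary convolution via Lemma~\ref{L: Localization}, Taylor-expand $\partial^\alpha f$ so that the moment conditions in $\mathcal{D}_n$ kill the polynomial terms, bound the order-$(p+1)$ remainder by a constant times $R_\varphi^{p+1}$ using $\int|\varphi|\leq 1+1/n$, and absorb the constant by taking $n$ large so that $R_\varphi\leq 1/n$. The only differences are cosmetic (your constant $2M$ versus the paper's $2C/(p+1)!$ and the explicit enlargement $K'$ playing the role of the paper's $\mathcal{O}$), so there is nothing to fix.
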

\begin{proof}  Suppose that $p\in\mathbb{N}$, $K\Subset\Omega$ and
$\alpha\in\mathbb{N}_0^d$. By Lemma~\ref{L: Localization}, there exists $q\in\mathbb{N}_0$ such that
$\partial^\alpha (f\circledast\varphi)(x)=(\partial^\alpha f\star\varphi)(x)$ for all $x\in K$ and all
$\varphi\in\mathcal{D}_q$. As before, let
$\mathcal{O}$ be an open relatively compact subset of $\Omega$ containing $K$ and let
$k\in\mathbb{N}$ be greater than $1/d(K,\partial\mathcal{O})$. Let
$n\geq\max\left\{p, q, k,
\frac{2C}{(p+1)!}\right\}$, where
$C=:\sum_{|\beta|=p+1}\sup_{\xi\in \mathcal{O}}\left|(\partial^{\alpha+\beta}f)(\xi)\right|$. Let $x\in K$
and $\varphi\in\mathcal{D}_n$. By involving the definition of the sets $\mathcal{D}_n$, we calculate:
\begin{align}
&\left|\partial^\alpha (f\circledast\varphi)(x)-\partial^\alpha f(x)\right|=(\mbox{Lemma~\ref{L:
Localization} and } n\geq q)=\left|(\partial^\alpha f\star\varphi)(x)-\partial^\alpha f(x)\right|=\notag\\
&(\mbox{since } n\geq 1)=\left|\int_{||y||\leq R_\varphi}\left[\partial^\alpha
f(x-y)-\partial^\alpha f(x)\right]\varphi(y)\, dy\right|= \small{\left(\!\!{\text{Taylor
expansion}\atop\text{ for some }t\in[0,1]}\!\right)}=\notag
\end{align}
\begin{align}
&\left|\sum_{|\beta|=1}^p\frac{(-1)^{|\beta|}\partial^{\alpha+\beta}f(x)}{|\beta|!}\underbrace{\int_{||y||\leq
R_\varphi}y^\beta\varphi(y)\, dy}_{= 0 \mbox{ since } n\geq
p}+\frac{(-1)^{p+1}}{(p+1)!}\sum_{|\beta|=p+1}\int_{||y||\leq
R_\varphi}y^\beta\varphi(y)\partial^{\alpha+\beta}f(x-yt)\, dy\right|=\notag
\end{align}
\begin{align} 
&=\frac{R_\varphi^{p+1}}{(p+1)!}\left(C\int_{||y||\leq R_\varphi}|\varphi(y)|\,
dy\right)\leq \frac{R_\varphi^{p+1}}{(p+1)!}\,C\, (1+1/n)<\frac{R_\varphi^{p+1}}{(p+1)!}\,2C\leq
R_\varphi^p,\notag  
\end{align}
as required, where the last inequality follows from $R_\varphi\leq1/n\leq (p+1)!/2C$. 
\end{proof} 
\begin{lemma}[Pairing]\label{L: Pairing} Let $\Omega$ be an open set
of $\mathbb{R}^d$, $T\in\mathcal{D}^\prime(\Omega)$ be a Schwartz distribution and
$\tau\in\mathcal{D}(\Omega)$ be a test function. Then for every $p\in\mathbb{N}$ there exists
$n\in\mathbb{N}_0$ such that  for every $\varphi\in\mathcal{D}_n$ we have
\begin{equation}\label{E: Pairing}
\left|\left(T\circledast\varphi\left.\right|\tau\right)-
\left(T\left.\right|\tau\right)\right|\leq (R_\varphi)^{p}.
\end{equation}
\end{lemma}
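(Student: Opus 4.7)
The idea is to reduce $(T\circledast\varphi|\tau)$ to the standard convolution pairing $(T\star\varphi|\tau)=(T|\varphi\star\tau)$ and then control $\varphi\star\tau-\tau$ by a Taylor expansion argument, exactly as was done for smooth functions in Lemma~\ref{L: Cinfinity-Functions}. Set $K=\supp(\tau)$ and pick an open relatively compact set $\mathcal{O}$ with $K\subset\mathcal{O}\subset\overline{\mathcal{O}}\subset\Omega$.

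First I would apply Lemma~\ref{L: Localization} with the compact set $K$: for $\varphi\in\mathcal{D}_{n_0}$ with $n_0$ large enough, $(T\circledast\varphi)(x)=(T\star\varphi)(x)$ for every $x\in K$, so
\[
(T\circledast\varphi|\tau)=\int_\Omega (T\star\varphi)(x)\tau(x)\,dx=(T\star\varphi|\tau)=(T|\check\varphi\star\tau)=(T|\varphi\star\tau),
\]
the last equality because $\varphi(-x)=\varphi(x)$ holds for $\varphi\in\mathcal{D}_n$, $n\geq 1$. Note also that for $R_\varphi$ small enough, $\varphi\star\tau\in\mathcal{D}(\mathcal{O})$. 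Thus
\[
(T\circledast\varphi|\tau)-(T|\tau)=(T\,|\,\varphi\star\tau-\tau),
\]
and the continuity of $T$ yields constants $M>0$ and $b\in\mathbb{N}_0$ with $|(T|\sigma)|\leq M\sum_{|\beta|\leq b}\sup_{\mathcal{O}}|\partial^\beta\sigma|$ for every $\sigma\in\mathcal{D}(\mathcal{O})$, giving
\[
|(T\circledast\varphi|\tau)-(T|\tau)|\leq M\sum_{|\beta|\leq b}\sup_{x\in\mathcal{O}}|(\varphi\star\partial^\beta\tau)(x)-\partial^\beta\tau(x)|,
\]
using $\partial^\beta(\varphi\star\tau)=\varphi\star\partial^\beta\tau$.

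Next I would bound each $\sup_{x\in\mathcal{O}}|(\varphi\star\partial^\beta\tau)(x)-\partial^\beta\tau(x)|$ by the same Taylor trick as in Lemma~\ref{L: Cinfinity-Functions}: for $n\geq p$ and $\varphi\in\mathcal{D}_n$, expanding $\partial^\beta\tau(x-y)$ to order $p$ around $x$, the vanishing-moment conditions of $\mathcal{D}_n$ kill the intermediate terms, and $\int|\varphi|\leq 1+1/n$ controls the remainder. This yields
\[
\sup_{x\in\mathcal{O}}|(\varphi\star\partial^\beta\tau)(x)-\partial^\beta\tau(x)|\leq \frac{2\,C_\beta}{(p+1)!}\,R_\varphi^{p+1},
\]
where $C_\beta=\sum_{|\gamma|=p+1}\sup_{\xi\in\mathcal{O}}|\partial^{\beta+\gamma}\tau(\xi)|$.

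Finally, setting $C'=M\sum_{|\beta|\leq b}\tfrac{2C_\beta}{(p+1)!}$ and choosing $n\geq \max\{n_0,\,p,\,C'\}$ (together with whatever is needed to keep $\varphi\star\tau\subset\mathcal{O}$), one gets $R_\varphi\leq 1/n\leq 1/C'$, hence
\[
|(T\circledast\varphi|\tau)-(T|\tau)|\leq C'\,R_\varphi^{p+1}\leq R_\varphi^{p},
\]
as required. The only mildly delicate step is the reduction to $(T|\varphi\star\tau)$ on the first line: it needs Lemma~\ref{L: Localization} to replace $\circledast$ by $\star$ on $K$ and the evenness of $\varphi\in\mathcal{D}_n$ to turn $\check\varphi\star\tau$ into $\varphi\star\tau$. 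After that, the argument is a direct repetition of the Taylor estimate already established in Lemma~\ref{L: Cinfinity-Functions}, with $\partial^\beta\tau$ in place of $f$.
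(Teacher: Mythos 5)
Your proof is correct and follows essentially the same route as the paper: reduce $(T\circledast\varphi|\tau)-(T|\tau)$ to $T$ acting on the smoothed test function minus $\tau$, invoke the continuity estimate for $T$ on $\mathcal{D}(\mathcal{O})$, and kill the smoothing error with the moment conditions of $\mathcal{D}_n$, absorbing the constant by taking $n$ large. The only cosmetic difference is that the paper simply cites Lemma~\ref{L: Cinfinity-Functions} (applied to the smooth function $\tau$, with exponent $p+1$) for the estimate you re-derive by hand via the Taylor expansion.
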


\begin{proof} Let $p\in\mathbb{N}$ and let $\mathcal{O}$ be an
open relatively compact subset of $\Omega$ containing $\supp(\tau)$. There exist $M\in\mathbb{R}_+$ and
$a\in\mathbb{N}_0$ such that $|\left(T\left.\right|\psi \right)|\leq M\sum_{|\alpha|\leq
a}\sup_{x\in\mathcal{O}}|\partial^\alpha\psi(x)|$ for all $\psi\in\mathcal{D}(\mathcal{O})$ by the
continuity of $T$. Also, there exists $q\in\mathbb{N}_0$ such that
$\left|\partial^\alpha(\tau\circledast\varphi)(x)-\partial^\alpha
\tau(x)\right|\leq (R_\varphi)^{p+1}$
for all $x\in \overline{\mathcal{O}}$, all $|\alpha|\leq a$ and all $\varphi\in\mathcal{D}_q$ by Lemma~\ref{L:
Cinfinity-Functions}. We  observe as well that there exists $m\in\mathbb{N}_0$ such that
$\tau\circledast\varphi-\tau\in\mathcal{D}(\mathcal{O})$ whenever
$\varphi\in\mathcal{D}_m$. Let $\varphi\in\mathcal{D}_n$, where
$n\geq\max\{1, q, m, M\sum_{|\alpha|\leq a}1\}$. Since $\varphi(-x)=\varphi(x)$ for all
$x\in\mathbb{R}^d$, we have $\left|\left(T\circledast\varphi\left.\right|\tau\right)-
\left(T\left.\right|\tau\right)\right|=\left|\left(T\left.\right|\tau\circledast\varphi-\tau\right)\right|\leq
M\sum_{|\alpha|\leq a}(R_\varphi)^{p+1}=(R_\varphi)^{p}(R_\varphi)M(\sum_{|\alpha|\leq
a}1)\leq(R_\varphi)^{p}$ as required.
\end{proof} 
\section{Asymptotic Numbers and Asymptotic Functions}\label{S: Asymptotic Numbers and Asymptotic
Functions}

	\quad We define a field  $\widehat{\mathbb{C}^{\mathcal{D}_0}}$ of {\em asymptotic numbers} and 
the differential algebra of {\em asymptotic functions} $\widehat{\mathcal{E}(\Omega)^{\mathcal{D}_0}}$ over
the field $\widehat{\mathbb{C}^{\mathcal{D}_0}}$. No background in non-standard analysis 
is required of the reader: our framework is still the usual standard analysis. Both
$\widehat{\mathbb{C}^{\mathcal{D}_0}}$ and  $\widehat{\mathcal{E}(\Omega)^{\mathcal{D}_0}}$,
however, do have alternative non-standard representations, but we shall postpone the discussion of the
connection with non-standard analysis to Section~\ref{S: J.F. Colombeau's Non-Linear Theory of
Generalized Functions and Non-Standard Analysis}.

 	 The readers who are unfamiliar with the non-linear theory of generalized functions 
(Colombeau~\cite{jfCol84a}-\cite{jfCol92}) might treat this and the next  sections as an introduction
to a (modified and improved version) of  Colombeau theory. The readers who are familiar with Colombeau
theory will observe the strong similarity between the construction of
$\widehat{\mathbb{C}^{\mathcal{D}_0}}$ and the definition of the ring $\overline{\mathbb{C}}$  of 
Colombeau generalized numbers (Colombeau~\cite{jfCol84a}, pp. 136). The
definition of $\widehat{\mathcal{E}(\Omega)^{\mathcal{D}_0}}$ also resembles the definition of the special 
algebra $\mathcal{G}(\Omega)$ of Colombeau generalized functions (Colombeau~\cite{jfCol85}).
We believe, however, that our asymptotic numbers and asymptotic functions offer an important
improvement of Colombeau theory because $\widehat{\mathbb{C}^{\mathcal{D}_0}}$ is an algebraically
closed field (Theorem~\ref{T: Algebraic Properties}) in contrast to
$\overline{\mathbb{C}}$, which is a ring with zero divisors.
\begin{definition}[Asymptotic Numbers]\label{D: Asymptotic Numbers} Let $R_\varphi$ be the radius of
support of $\varphi$ (cf.(\ref{E: RadiusSupport})).
\begin{enumerate} 
\item\label{No: Moderate} We define the sets of the
\textbf{moderate} and \textbf{negligible} nets in $\mathbb{C}^{\mathcal{D}_0}$ by
\begin{align}
&\mathcal{M}(\mathbb{C}^{\mathcal{D}_0})=\left\{(A_\varphi)\in{\mathbb{C}^{\mathcal{D}_0}} : \;
(\exists m\in\mathbb{N})\left(|A_\varphi|\leq (R_\varphi)^{-m}\mbox{ a.e.}\right) \right\},
\label{E: Moderate}\\ 
&\mathcal{N}(\mathbb{C}^{\mathcal{D}_0})=\left\{(A_\varphi)\in{\mathbb{C}^{\mathcal{D}_0}}:\;
(\forall p\in\mathbb{N})\left(|A_\varphi|< (R_\varphi)^{p} \mbox{ a.e.}\right)\right\}, \label{E: Negligible}
\end{align}
respectively, where ``a.e'' stands for ``almost everywhere'' (Definition~\ref{D: Almost Everywhere}).  We
define the factor ring  $\widehat{\mathbb{C}^{\mathcal{D}_0}}=:
\mathcal{M}(\mathbb{C}^{\mathcal{D}_0})/\mathcal{N}(\mathbb{C}^{\mathcal{D}_0})$ and we
denote by
$\widehat{A_\varphi}\in\widehat{\mathbb{C}^{\mathcal{D}_0}}$ the equivalence class of
the net $(A_\varphi)\in\mathcal{M}(\mathbb{C}^{\mathcal{D}_0})$.

\item If $\mathcal{S}\subseteq\mathbb{C}^{\mathcal{D}_0}$, we let 
$\widehat{\mathcal{S}}=:\left\{\widehat{A_\varphi} :
(A_\varphi)\in\mathcal{S}\cap\mathcal{M}(\mathbb{C}^{\mathcal{D}_0})\right\}$. We call the
elements of  $\widehat{\mathbb{C}^{\mathcal{D}_0}}$ \textbf{complex asymptotic numbers}
 and the elements of $\widehat{\mathbb{R}^{\mathcal{D}_0}}$
\textbf{real asymptotic numbers}. We define an \textbf{order relation} on
$\widehat{\mathbb{R}^{\mathcal{D}_0}}$ as follows: Let
$\widehat{A_\varphi}\in\widehat{\mathbb{R}^{\mathcal{D}_0}}$ and
$\widehat{A_\varphi}\not= 0$. Then  
$\widehat{A_\varphi}>0$  if $A_\varphi>0$ a.e., that is 
$\{\varphi\in\mathcal{D}_0 : A_\varphi>0\}\in\mathcal{U}$.
\item We define the embeddings $\mathbb{C}\subset\widehat{\mathbb{C}^{\mathcal{D}_0}}$ and 
$\mathbb{R}\subset\widehat{\mathbb{R}^{\mathcal{D}_0}}$ by the constant nets, i.e. by
$A\to\widehat{A}$.
\end{enumerate}
\end{definition} 
\begin{theorem}[Algebraic Properties]\label{T: Algebraic Properties}
$\widehat{\mathbb{C}^{\mathcal{D}_0}}$ is an algebraically closed field, 
$\widehat{\mathbb{R}^{\mathcal{D}_0}}$ is a real closed field and we have the usual
connection
$\widehat{\mathbb{C}^{\mathcal{D}_0}}=\widehat{\mathbb{R}^{\mathcal{D}_0}}\oplus
i\,\widehat{\mathbb{R}^{\mathcal{D}_0}}$. 
\end{theorem}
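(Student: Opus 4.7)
The plan is to prove the three assertions in the order: (i) $\widehat{\mathbb{C}^{\mathcal{D}_0}}$ is a field, (ii) it is algebraically closed, (iii) the decomposition into real and imaginary parts makes sense and $\widehat{\mathbb{R}^{\mathcal{D}_0}}$ inherits the structure of a real closed field via Artin--Schreier.

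First I would verify the routine ring-theoretic facts that $\mathcal{M}(\mathbb{C}^{\mathcal{D}_0})$ is a subring of $\mathbb{C}^{\mathcal{D}_0}$ and $\mathcal{N}(\mathbb{C}^{\mathcal{D}_0})$ is an ideal in it; this uses nothing more than closure of ``almost everywhere'' under finite intersections (property \#\ref{No: F2} of Lemma~\ref{L: List of Properties of U}) together with moderate$\times$negligible $=$ negligible. To show $\widehat{\mathbb{C}^{\mathcal{D}_0}}$ is a field, take a nonzero class $\widehat{A_\varphi}$. Nonzero means $(A_\varphi)\notin\mathcal{N}$, so the negation of the negligibility clause together with the ultrafilter dichotomy (\#\ref{No: Ultra}) yields some $p\in\mathbb{N}$ with $\{\varphi : |A_\varphi|\geq R_\varphi^{p}\}\in\mathcal{U}$. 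Define $B_\varphi=A_\varphi$ where $A_\varphi\neq 0$ and $B_\varphi=R_\varphi^{p}$ elsewhere; then $\widehat{B_\varphi}=\widehat{A_\varphi}$, $B_\varphi$ never vanishes, and on the good set $|1/B_\varphi|\leq R_\varphi^{-p}$, so $(1/B_\varphi)\in\mathcal{M}$ and gives the multiplicative inverse.

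Next, for algebraic closedness, let $P(z)=z^n+c_{n-1}z^{n-1}+\cdots+c_0$ with coefficients $c_j=\widehat{(c_j)_\varphi}\in\widehat{\mathbb{C}^{\mathcal{D}_0}}$, $n\geq 1$. For each $\varphi\in\mathcal{D}_0$ choose by the fundamental theorem of algebra some root $z_\varphi\in\mathbb{C}$ of the pointwise polynomial $P_\varphi(z)=z^n+(c_{n-1})_\varphi z^{n-1}+\cdots+(c_0)_\varphi$. The classical bound on polynomial roots gives $|z_\varphi|\leq 1+\max_j|(c_j)_\varphi|$; since each coefficient net is moderate, picking a common $m$ with $|(c_j)_\varphi|\leq R_\varphi^{-m}$ a.e.\ for all $j$, we obtain $|z_\varphi|\leq 2R_\varphi^{-m}\leq R_\varphi^{-m-1}$ a.e.\ (using that $R_\varphi\leq 1/n$ for $\varphi\in\mathcal{D}_n$, Example~\ref{Ex: Radius of Support}). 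Hence $(z_\varphi)\in\mathcal{M}$, and since $P_\varphi(z_\varphi)=0$ a.e.\ (actually everywhere), the class $\widehat{z_\varphi}$ is a root of $P$.

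Finally, every moderate (resp.\ negligible) complex net decomposes as $A_\varphi=\Re A_\varphi+i\,\Im A_\varphi$ with both parts again moderate (resp.\ negligible) since $|\Re A_\varphi|,|\Im A_\varphi|\leq|A_\varphi|$; this gives $\widehat{\mathbb{C}^{\mathcal{D}_0}}=\widehat{\mathbb{R}^{\mathcal{D}_0}}+i\widehat{\mathbb{R}^{\mathcal{D}_0}}$, and the sum is direct because $\widehat{a_\varphi}+i\widehat{b_\varphi}=0$ with $a_\varphi,b_\varphi$ real-valued forces both parts negligible. The order relation on $\widehat{\mathbb{R}^{\mathcal{D}_0}}$ is well-defined and total by the ultrafilter trichotomy applied to the partition $\mathcal{D}_0=\{A_\varphi>0\}\cup\{A_\varphi<0\}\cup\{A_\varphi=0\}$ (the last piece cannot be in $\mathcal{U}$ when the class is nonzero, since it would force negligibility). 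Thus $\widehat{\mathbb{R}^{\mathcal{D}_0}}$ is an ordered, hence formally real, subfield properly contained in $\widehat{\mathbb{C}^{\mathcal{D}_0}}=\widehat{\mathbb{R}^{\mathcal{D}_0}}[i]$, and since the latter is algebraically closed, the Artin--Schreier theorem delivers that $\widehat{\mathbb{R}^{\mathcal{D}_0}}$ is real closed.

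The main obstacle I anticipate is the moderateness bound in the algebraic-closedness step: one must verify that the pointwise roots $z_\varphi$, which are not canonically chosen, nonetheless give a moderate net. The classical Cauchy root bound keeps this routine, but it is the one place where one must exploit both the moderateness of the coefficient nets and the fact that $R_\varphi$ is almost-everywhere less than $1$ (which is what allows absorbing the ``$1+$'' into a power of $R_\varphi^{-1}$).
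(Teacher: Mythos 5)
Your proposal is correct and follows essentially the same route as the paper's proof: invertibility via the ultrafilter dichotomy (a nonzero class satisfies $|A_\varphi|\geq R_\varphi^{p}$ a.e.\ for some $p$, so one can invert on a large set), algebraic closedness via pointwise roots together with the Cauchy root bound to secure moderateness, and real closedness of $\widehat{\mathbb{R}^{\mathcal{D}_0}}$ deduced from the decomposition $\widehat{\mathbb{C}^{\mathcal{D}_0}}=\widehat{\mathbb{R}^{\mathcal{D}_0}}\oplus i\,\widehat{\mathbb{R}^{\mathcal{D}_0}}$ and the Artin--Schreier-type result (the paper's citation of Van Der Waerden, Chapter 11). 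The only cosmetic differences are that you modify the representative before inverting rather than defining the inverse net directly, and that you verify orderability from the ultrafilter trichotomy up front, whereas the paper defines the order in Definition~\ref{D: Asymptotic Numbers} and recovers totality afterwards in Corollary~\ref{C: Total Order}.
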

\begin{proof}  It is clear that
$\widehat{\mathbb{C}^{\mathcal{D}_0}}$ is a ring and
$\widehat{\mathbb{C}^{\mathcal{D}_0}}=
\widehat{\mathbb{R}^{\mathcal{D}_0}}+i\,\widehat{\mathbb{R}^{\mathcal{D}_0}}$. To show that 
$\widehat{\mathbb{C}^{\mathcal{D}_0}}$ is a field, suppose that
$(A_\varphi)\in\mathcal{M}(\mathbb{C}^{\mathcal{D}_0})\setminus
\mathcal{N}(\mathbb{C}^{\mathcal{D}_0})$. Thus there exist $m, p\in\mathbb{N}$ such that 
$\Phi =: \{\varphi\in\mathcal{D}_0 : (R_\varphi)^p\leq |A_\varphi|\leq
(R_\varphi)^{-m}\}\in\mathcal{U}$. We define the net $(B_\varphi)\in\mathbb{C}^{\mathcal{D}_0}$ by 
$B_\varphi=1/A_\varphi$ if $\varphi\in\Phi$ and $B_\varphi=1$ if
$\varphi\in\mathcal{D}_0\setminus\Phi$. It is clear that $A_\varphi\, B_\varphi=1$ a.e. thus
$\widehat{A_\varphi}\, \widehat{B_\varphi}=1$ as required. To show that
$\widehat{\mathbb{C}^{\mathcal{D}_0}}$ is an algebraically closed field, let
$P(x)=x^p+a_{n-1}x^{p-1}+\cdots +a_0$ be a polynomial with coefficients in
$\widehat{\mathbb{C}^{\mathcal{D}_0}}$ and degree $p\geq 1$. Since
$\widehat{\mathbb{C}^{\mathcal{D}_0}}$ is a field, we have assumed without loss of generality that the leading
coefficient is $1$. We have 
$a_k=\widehat{A_{\varphi, k}}$, for some moderate nets $(A_{\varphi, k})$. Denote
$P_\varphi(x) =: x^p+A_{\varphi, p-1}x^{p-1}+\cdots +A_{\varphi, 0}$ and observe that for every
$\varphi\in\mathcal{D}_0$ there exists a complex number $X_\varphi\in\mathbb{C}$ such that
$P_\varphi(X_\varphi)=0$ since $\mathbb{C}$ is an algebraically closed field. Thus there exists a net
$(X_\varphi)\in\mathbb{C}^{\mathcal{D}_0}$ such that
$P(X_\varphi)=0$ for all $\varphi\in\mathcal{D}_0$. Also the estimation $|X_\varphi|\leq 
1+|A_{\varphi, p-1}|+\cdots +|A_{\varphi, 0}|$ implies that the net $(X_\varphi)$ is a moderate net. The
asymptotic number $\widehat{X_\varphi}\in\widehat{\mathbb{C}^{\mathcal{D}_0}}$ is the zero of the
polynomial $P$ we are looking for because
$P(\widehat{X_\varphi})=\widehat{X_\varphi}^p+a_{p-1}\widehat{X_\varphi}^{p-1}+\cdots
+a_0=\widehat{X_\varphi}^p+\widehat{A_{\varphi, p-1}}\widehat{X_\varphi}^{p-1}+\cdots
+\widehat{A_{\varphi, 0}}=\widehat{P_\varphi (X_\varphi)}=\widehat{0}=0$ as required. The fact that
$\widehat{\mathbb{R}^{\mathcal{D}_0}}$ is a real closed field follows directly from the fact that
$\widehat{\mathbb{C}^{\mathcal{D}_0}}$ is an algebraically closed field and the connection
$\widehat{\mathbb{C}^{\mathcal{D}_0}}=
\widehat{\mathbb{R}^{\mathcal{D}_0}}+i\,\widehat{\mathbb{R}^{\mathcal{D}_0}}$
(Van Der Waerden~\cite{VanDerWaerden}, Chapter 11). 
\end{proof} 
\begin{corollary}[Total Order]\label{C: Total Order} $\widehat{\mathbb{R}^{\mathcal{D}_0}}$ is a
\textbf{totally ordered field} and we have the following characterization of the order relation: if 
$a\in\widehat{\mathbb{R}^{\mathcal{D}_0}}$ then $a\geq0$ \ifff $a=b^2$ for some
$b\in\widehat{\mathbb{R}^{\mathcal{D}_0}}$. Consequently, the mapping $|\cdot|:
\widehat{\mathbb{C}^{\mathcal{D}_0}}\to\widehat{\mathbb{R}^{\mathcal{D}_0}}$, defined by the formula
$|a+ib|=\sqrt{a^2+b^2}$, is an \textbf{absolute value} on $\widehat{\mathbb{C}^{\mathcal{D}_0}}$
(Ribenboim~\cite{pRib}, pp.3-6).
\end{corollary}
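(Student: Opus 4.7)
The plan is to establish the three claims of the corollary in sequence: totality of the order on $\widehat{\mathbb{R}^{\mathcal{D}_0}}$, the characterization of non-negativity as squareness, and the absolute-value axioms for $|\cdot|$.

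For totality, I would argue via ultrafilter trichotomy. Given a representative $(A_\varphi)$ of $a\in\widehat{\mathbb{R}^{\mathcal{D}_0}}$, the three sets $\Phi_+=\{\varphi:A_\varphi>0\}$, $\Phi_0=\{\varphi:A_\varphi=0\}$, $\Phi_-=\{\varphi:A_\varphi<0\}$ form a disjoint partition of $\mathcal{D}_0$, so by item \#\ref{No: Ultra} of Lemma~\ref{L: List of Properties of U} exactly one of them lies in $\mathcal{U}$. If $\Phi_0\in\mathcal{U}$, then $|A_\varphi|=0<(R_\varphi)^p$ a.e.\ for every $p$, hence $(A_\varphi)$ is negligible and $a=0$. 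Otherwise exactly one of $a>0$, $-a>0$ holds. Well-definedness on equivalence classes is the one delicate point: if $\widehat{A_\varphi}=\widehat{A'_\varphi}\neq 0$, then $(A_\varphi)$ is non-negligible, so $|A_\varphi|\ge (R_\varphi)^m$ on some $\Phi\in\mathcal{U}$ for a fixed $m$; combined with $|A_\varphi-A'_\varphi|<(R_\varphi)^{m+1}$ a.e., this forces $A'_\varphi$ to have the same sign as $A_\varphi$ on an ultrafilter set, by the finite-intersection property. Compatibility of the sign with $+$ and $\cdot$ is then immediate.

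For the square characterization, the reverse direction is automatic since $B_\varphi^2\geq 0$ pointwise. For the forward direction, suppose $a=\widehat{A_\varphi}\geq 0$; if $a=0$ take $b=0$, and if $a>0$ choose $\Phi\in\mathcal{U}$ on which $A_\varphi>0$, define $B_\varphi:=\sqrt{A_\varphi}$ on $\Phi$ and $B_\varphi:=0$ elsewhere. From $|A_\varphi|\leq (R_\varphi)^{-m}$ a.e.\ together with $R_\varphi\leq 1$ a.e.\ (Example~\ref{Ex: Radius of Support}) we get $|B_\varphi|\leq (R_\varphi)^{-m}$ a.e., so $(B_\varphi)$ is moderate and $\widehat{B_\varphi}^2=a$. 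This conclusion could alternatively be read off from real-closedness (Theorem~\ref{T: Algebraic Properties}), but the pointwise construction is direct. Uniqueness of the non-negative square root follows in any field from $(b-c)(b+c)=0$.

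Finally, to deduce that $|z|:=\sqrt{a^2+b^2}$ (for $z=a+ib$) is an absolute value on $\widehat{\mathbb{C}^{\mathcal{D}_0}}$, the preceding step guarantees that $a^2+b^2\geq 0$ has a unique non-negative square root in $\widehat{\mathbb{R}^{\mathcal{D}_0}}$. Positive definiteness, multiplicativity $|zw|=|z||w|$, and the triangle inequality then follow by the standard arguments valid in any totally ordered field in which non-negative elements admit square roots; these are carried out in Ribenboim~\cite{pRib}. The main obstacle throughout is the careful bookkeeping of moderateness bounds combined with the ultrafilter's finite-intersection property when passing between representatives; once trichotomy and well-definedness are in hand, the remaining steps are mostly algebraic consequences of $\widehat{\mathbb{R}^{\mathcal{D}_0}}$ being a real closed, totally ordered field.
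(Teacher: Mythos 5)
Your proof is correct, but it takes a different route from the paper's. The paper disposes of the corollary in two lines: by Theorem~\ref{T: Algebraic Properties}, $\widehat{\mathbb{R}^{\mathcal{D}_0}}$ is a real closed field, and the Artin--Schreier theory cited from Van Der Waerden says that in a real closed field the algebraic structure determines a unique total order whose non-negative cone is exactly the set of squares, and that non-negative elements admit square roots; the absolute-value axioms are then delegated to Ribenboim, exactly as you do. You instead work directly with representatives and the ultrafilter: trichotomy of the sets $\Phi_+,\Phi_0,\Phi_-$ plus \#\ref{No: Ultra} of Lemma~\ref{L: List of Properties of U} gives totality, and you build the square root pointwise as $\sqrt{A_\varphi}$ on a set of $\mathcal{U}$, checking moderateness from $|A_\varphi|\leq (R_\varphi)^{-m}$ and $R_\varphi\leq 1$ a.e. Your argument is more self-contained (it does not lean on the algebraically-closed-field machinery behind Theorem~\ref{T: Algebraic Properties}) and it makes explicit two points the paper leaves implicit: that the relation $\widehat{A_\varphi}>0$ is well defined on equivalence classes (your $m$ versus $m+1$ estimate, using that a non-negligible net is bounded below by some $(R_\varphi)^m$ a.e.), and that the a.e.-defined order coincides with the square-cone order. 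What the paper's approach buys is brevity and the additional structural information that this is the \emph{unique} order compatible with the field operations; what yours buys is an elementary, constructive verification in the spirit of Theorem~\ref{T: Algebraic Properties}'s own proof, at the cost of some bookkeeping. Both are sound, and your deferral of positive definiteness, multiplicativity and the triangle inequality to the standard ordered-field arguments in Ribenboim matches what the statement itself presupposes.
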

\begin{proof} The algebraic operations in any
real closed field uniquely determine a total order (Van Der Waerden~\cite{VanDerWaerden}, Chapter 11). Thus
the characterization of the order relation in $\widehat{\mathbb{R}^{\mathcal{D}_0}}$ follows directly from
the fact that $\widehat{\mathbb{R}^{\mathcal{D}_0}}$ is a real closed field.  The
existence of the root 
$\sqrt{x}$ for any non-negative $x$ in $\widehat{\mathbb{R}^{\mathcal{D}_0}}$ also follows from the fact
that $\widehat{\mathbb{R}^{\mathcal{D}_0}}$ is a real closed field.
\end{proof} 
\begin{definition}[Infinitesimals, Finite and Infinitely Large]
\label{D: Infinitesimals, Finite and Infinitely Large}  An asymptotic number
$z\in\widehat{\mathbb{C}^{\mathcal{D}_0}}$ is called
\textbf{infinitesimal}, in symbol $z\approx 0$, if $|z|<1/n$ for all $n\in\mathbb{N}$.  Similarly, $z$ is called
\textbf{finite} if $|z|<n$ for some $n\in\mathbb{N}$. And $z$ is \textbf{infinitely large} if $n<|z|$ for all
$n\in\mathbb{N}$. We denote by $\mathcal{I}(\widehat{\mathbb{C}^{\mathcal{D}_0}}),
\mathcal{F}(\widehat{\mathbb{C}^{\mathcal{D}_0}})$ and
$\mathcal{L}(\widehat{\mathbb{C}^{\mathcal{D}_0}})$ the sets of the infinitesimal, finite and infinitely large
numbers in
$\widehat{\mathbb{C}^{\mathcal{D}_0}}$, respectively. We define the \textbf{standard part mapping} 
$\widehat{\st}: \mathcal{F}(\widehat{\mathbb{C}^{\mathcal{D}_0}})\to
\mathbb{C}$ by the formula $\widehat{\st}(z)\approx z$.
\end{definition}

	 The next result shows that both $\widehat{\mathbb{R}^{\mathcal{D}_0}}$ and
$\widehat{\mathbb{C}^{\mathcal{D}_0}}$ are non-archimedean fields in the sense that they contain
non-zero infinitesimals.
\begin{lemma}[Canonical Infinitesimal in $\widehat{\mathbb{R}^{\mathcal{D}_0}}$] \label{D: Canonical
Infinitesimal in widehatRD0} Let $R_\varphi$ be the radius of
support of $\varphi$ (cf.(\ref{E: RadiusSupport})).  Then the asymptotic number
$\widehat{\rho}=:\widehat{R_\varphi}$ is a positive infinitesimal in
$\widehat{\mathbb{R}^{\mathcal{D}_0}}$.  We call  $\widehat{\rho}$ the
\textbf{canonical infinitesimal} in $\widehat{\mathbb{R}^{\mathcal{D}_0}}$ (the choice of the notation 
$\widehat{\rho}$ will be justified in Section~\ref{S: J.F. Colombeau's Non-Linear Theory of Generalized
Functions and Non-Standard Analysis}).
\end{lemma}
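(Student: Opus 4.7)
The plan is simply to verify the three conditions packaged into the conclusion: that $(R_\varphi)$ is a moderate net (so that $\widehat{R_\varphi}$ is a well-defined element of $\widehat{\mathbb{R}^{\mathcal{D}_0}}$), that $\widehat{R_\varphi} > 0$, and that $|\widehat{R_\varphi}| < 1/n$ for every $n \in \mathbb{N}$. All three follow directly from the built-in bounds on $R_\varphi$ baked into the directing sets $\mathcal{D}_n$ and from the fact that $\mathcal{D}_n \in \mathcal{U}$ for every $n$ (Theorem on the existence of $\mathcal{U}$).

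First, I would handle moderateness. Observe that on $\mathcal{D}_1$ one has $R_\varphi \leq 1$ and (as I justify below) $R_\varphi > 0$, so $R_\varphi \leq R_\varphi^{-1}$ holds a.e., giving $(R_\varphi) \in \mathcal{M}(\mathbb{R}^{\mathcal{D}_0})$ with $m = 1$ in (\ref{E: Moderate}). The required positivity of $R_\varphi$ a.e. comes from the fact that on $\mathcal{D}_n$ for any $n \geq 1$ the defining condition $\int_{\mathbb{R}^d}\varphi(x)\,dx = 1$ forces $\varphi \not\equiv 0$, and therefore $R_\varphi > 0$ by (\ref{E: RadiusSupport}). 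Since $\mathcal{D}_1 \in \mathcal{U}$, we have both $R_\varphi > 0$ a.e. and $R_\varphi \leq 1$ a.e.

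Next, positivity of $\widehat{R_\varphi}$ follows from Definition \ref{D: Asymptotic Numbers}: the set $\{\varphi : R_\varphi > 0\}$ contains $\mathcal{D}_1 \in \mathcal{U}$ and hence belongs to $\mathcal{U}$ by property \#\ref{No: F1} of Lemma \ref{L: List of Properties of U}, so $\widehat{R_\varphi} > 0$. (One should also check $\widehat{R_\varphi} \neq 0$, but that is a special case of the infinitesimal bound below, or equivalently follows since $R_\varphi$ is not negligible — take any $p$: $R_\varphi < R_\varphi^p$ would fail on all of $\mathcal{D}_1$.)

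Finally, for the infinitesimal property, fix $n \in \mathbb{N}$. By Definition \ref{D: Directing Sets}, every $\varphi \in \mathcal{D}_{n+1}$ satisfies $R_\varphi \leq 1/(n+1) < 1/n$. Since $\mathcal{D}_{n+1} \in \mathcal{U}$, the inequality $R_\varphi < 1/n$ holds a.e., which translates to $\widehat{R_\varphi} < 1/n$ in $\widehat{\mathbb{R}^{\mathcal{D}_0}}$ by the order definition. As this holds for every $n$, $\widehat{\rho} = \widehat{R_\varphi}$ is a positive infinitesimal per Definition \ref{D: Infinitesimals, Finite and Infinitely Large}. There is no genuine obstacle here; the lemma is essentially a sanity check that the directing filter was designed precisely to make $R_\varphi$ an infinitesimal generator — all the real work sits in Theorem \ref{T: Directing Sequence of D(Rd)} (showing the $\mathcal{D}_n$ are nonempty).
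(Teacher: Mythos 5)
Your proof is correct and follows essentially the same route as the paper's: the authors likewise deduce $0\leq\widehat{\rho}<1/n$ from the fact that $R_\varphi\in\mathbb{R}_+$ and $R_\varphi<1/n$ hold a.e.\ (via $\mathcal{D}_n\in\mathcal{U}$, their Example on the radius of support) and note $\widehat{\rho}\neq 0$ because $(R_\varphi)\notin\mathcal{N}(\mathbb{C}^{\mathcal{D}_0})$. Your version merely adds the routine checks (moderateness, and using $\mathcal{D}_{n+1}$ to secure the strict inequality) that the paper leaves implicit.
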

\begin{proof}  We have $0\leq\widehat{\rho}<1/n$ for all
$n\in\mathbb{N}$ because $R_\varphi\in\mathbb{R}_+\, \&\,
R_\varphi<1/n$ a.e.  (cf. Example~\ref{Ex: Radius of Support}). Also, $\widehat{\rho}\not=0$ because
$(R_\varphi)\notin\mathcal{N}(\mathbb{C}^{\mathcal{D}_0})$.
\end{proof} 

\begin{definition}[Topology, Valuation, Ultra-Norm,  Ultra-Metric]\label{D: Topology, Valuation, Ultra-Norm,
Ultra-Metric}  We supply $\widehat{\mathbb{C}^{\mathcal{D}_0}}$ with the \textbf{order topology},
i.e. the product topology inherited from the  order topology on $\widehat{\mathbb{R}^{\mathcal{D}_0}}$. 
We define a
\textbf{valuation}
$v:\widehat{\mathbb{C}^{\mathcal{D}_0}}\to\mathbb{R}\cup\{\infty\}$  on
$\widehat{\mathbb{C}^{\mathcal{D}_0}}$ by 
$v(z)=\sup\{q\in\mathbb{Q}\mid z/\widehat{\rho}\,^q\approx 0\}$ if $z\not= 0$ and $v(0)=\infty$.
We define  the
\textbf{ultra-norm}
$|\cdot|_v: \widehat{\mathbb{C}^{\mathcal{D}_0}}\to\mathbb{R}$ by the formula $|z|_v=e^{-v(z)}$ (under
the convention that $e^{-\infty}=0$). The formula
$d(a, b)=|a-b|_v$ defines an \textbf{ultra-metric} on $\widehat{\mathbb{C}^{\mathcal{D}_0}}$.
\end{definition}
\begin{theorem}[Ultra-Properties]\label{T: Ultra-Properties} Let $a, b, c\in
{\widehat{\mathbb{C}^{\mathcal{D}_0}}}$. Then
\begin{description}
\item[(i)]  {\bf (a)} $v(a)=\infty$ \ifff $a=0$; 
{\bf (b)} $v(ab)=v(a)+v(b)$. {\bf (c)} $v(a+b)\geq\min\{v(a), v(b)\}$;
{\bf (d)} $|a|<|b|$ implies $v(a)\geq v(b)$.
\item[(ii)] {\bf (a)} $|0|_v=0,  |\pm1|_v=1,\; \text{and}\; 
|a|_v>0\; \text{whenever }  a\not=0$ ; {\bf (b)}  $|ab|_v=|a|_v\, |b|_v$; 
{\bf (c)} $|a+b|_v\leq\max\{|a|_v, |b|_v\}$ (\textbf{ultra-norm inequality}); {\bf (d)}
$|a|<|b|\;\text{implies}\;  |a|_v\leq |b|_v$.
\item[(iii)]  $d(a,b)\leq\max\{
d(a,c), d(c, b)\}$ (\textbf{ultra-metric inequality}). Consequently, 
$(\widehat{\mathbb{C}^{\mathcal{D}_0}}, d)$ and
$(\widehat{\mathbb{R}^{\mathcal{D}_0}}, d)$ are {\bf ultra-metric spaces}.
\end{description}
\end{theorem}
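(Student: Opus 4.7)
The plan is to establish part (i) in full, then derive (ii) by applying the order-reversing exponential $t \mapsto e^{-t}$, and deduce (iii) from (ii)(c). Before turning to the valuation identities themselves, I would first check that $v$ is well defined by showing that for every $a \neq 0$ the set $\Sigma_a := \{q \in \mathbb{Q} : a/\widehat{\rho}^{\,q} \approx 0\}$ is nonempty, bounded above, and downward closed in $\mathbb{Q}$. Downward closure follows from the factorization $a/\widehat{\rho}^{\,q} = (a/\widehat{\rho}^{\,q'}) \cdot \widehat{\rho}^{\,q'-q}$ for $q < q'$, together with the observation that $\widehat{\rho}^{\,s}$ is a positive infinitesimal (hence bounded) for every rational $s > 0$. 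Nonemptiness is immediate from moderateness: if $|A_\varphi| \leq R_\varphi^{-m}$ a.e., every rational $q < -m$ lies in $\Sigma_a$. Boundedness above is what forces $v(a) \in \mathbb{R}$: were $\Sigma_a$ unbounded, downward closure would give $a/\widehat{\rho}^{\,p} \approx 0$ for every $p \in \mathbb{N}$, so the representing net would be negligible, contradicting $a \neq 0$.

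Parts (i)(a), (c), (d) are short once this is in hand. Item (a) is the preceding observation. Item (c) follows since for any rational $q < \min\{v(a), v(b)\}$ the quotient $(a+b)/\widehat{\rho}^{\,q}$ is a sum of two infinitesimals. Item (d) is a monotonicity check: $|a| < |b|$ combined with $\widehat{\rho}^{\,q} > 0$ gives $|a/\widehat{\rho}^{\,q}| < |b/\widehat{\rho}^{\,q}|$, so $\Sigma_b \subseteq \Sigma_a$ and the inequality on $v$ follows upon taking suprema.

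The real content lies in (i)(b). For the inequality $v(a) + v(b) \leq v(ab)$, take arbitrary rationals $p < v(a)$ and $r < v(b)$; then $a/\widehat{\rho}^{\,p}$ and $b/\widehat{\rho}^{\,r}$ are both infinitesimal, so their product $(ab)/\widehat{\rho}^{\,p+r}$ is infinitesimal, placing $p+r$ in $\Sigma_{ab}$; taking suprema yields the bound. The reverse inequality is the main obstacle. Assuming $v(ab) > v(a) + v(b)$, I would choose rationals $p > v(a)$ and $r > v(b)$ close enough to their targets that $p + r < v(ab)$; then $a/\widehat{\rho}^{\,p}$ and $b/\widehat{\rho}^{\,r}$ are both non-infinitesimal. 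The hardest step is to convert ``non-infinitesimal'' into an explicit lower bound: here I would invoke the total order on $\widehat{\mathbb{R}^{\mathcal{D}_0}}$ from Corollary~\ref{C: Total Order} to extract $n, m \in \mathbb{N}$ with $|a/\widehat{\rho}^{\,p}| \geq 1/n$ and $|b/\widehat{\rho}^{\,r}| \geq 1/m$, so that $|(ab)/\widehat{\rho}^{\,p+r}| \geq 1/(nm)$. This contradicts $p + r \in \Sigma_{ab}$, which holds by downward closure since $p + r < v(ab) = \sup \Sigma_{ab}$.

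Once (i) is established, (ii) is mechanical: (ii)(a) combines $v(\pm 1) = 0$ with the definition $|z|_v = e^{-v(z)}$; (ii)(b) is the exponentiation of (i)(b); (ii)(c) exponentiates (i)(c) using $e^{-\min\{x,y\}} = \max\{e^{-x}, e^{-y}\}$; and (ii)(d) exponentiates (i)(d). Statement (iii) is then the ultra-metric inequality, which is (ii)(c) rewritten via $d(a,b) = |a-b|_v$ applied to the identity $(a-b) = (a-c) + (c-b)$.
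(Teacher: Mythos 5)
Your proof is correct. The paper itself gives no argument here --- it simply says that (i)--(iii) ``follow easily from the definition of $v$'' and leaves the verification to the reader --- and your write-up supplies exactly that verification: the well-definedness of $v$ via the downward-closed set $\Sigma_a$, the routine items (i)(a),(c),(d), and, for the only genuinely non-trivial point, the inequality $v(ab)\le v(a)+v(b)$, which you correctly obtain by using the total order on $\widehat{\mathbb{R}^{\mathcal{D}_0}}$ (Corollary~\ref{C: Total Order}) to convert non-infinitesimality of $a/\widehat{\rho}^{\,p}$ and $b/\widehat{\rho}^{\,r}$ into lower bounds $1/n$, $1/m$ --- precisely the step where the field structure enters and where the Colombeau counterpart only yields a pseudo-valuation (cf. Remark~\ref{R: Colombeau Theory}).
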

\begin{proof}  The properties (i)-(iii) follow easily from the definition of $v$ and we leave the verification to
the reader.
\end{proof} 
\begin{remark}[Colombeau Theory]\label{R: Colombeau Theory}  The counterpart $\bar{v}$ of $v$ in
Colombeau theory is only a pseudo-valuation, not a valuation, in the sense that $\bar{v}$ satisfies the
property
$\bar{v}(ab)\geq \bar{v}(a)+\bar{v}(b)$, not $v(ab)=v(a)+v(b)$. Consequently, the the counterpart
$|\cdot|_{\bar{v}}$ of
$|\cdot|_v$ in Colombeau theory is pseudo-ultra-metric, not a ultra-metric, in the sense that it satisfies the
property
$|ab|_{\bar{v}}\leq |a|_{\bar{v}}\, |b|_{\bar{v}}$, not $|ab|_v=|a|_v\, |b|_v$. For the concept of {\em classical
valuation} we refer the reader to (Ribenboim~\cite{pRib}).

\end{remark}
\begin{definition}[Asymptotic Functions]\label{D: Asymptotic Functions}  Let $\Omega$ be an open set
of
$\mathbb{R}^d$ and $R_\varphi$
be the radius of support of $\varphi$ (cf.(\ref{E: RadiusSupport})). 
\begin{enumerate} 
\item We define the sets of the
\textbf{moderate nets} $\mathcal{M}(\mathcal{E}(\Omega)^{\mathcal{D}_0})$ and \textbf{negligible nets}
$\mathcal{N}(\mathcal{E}(\Omega)^{\mathcal{D}_0})$ of
$\mathcal{E}(\Omega)^{\mathcal{D}_0}$ by:
$(f_\varphi)\in\mathcal{M}(\mathcal{E}(\Omega)^{\mathcal{D}_0})$ if (by definition)
\begin{equation}\label{E: rho-moderate}\notag
(\forall K\Subset\Omega)(\forall\alpha\in\mathbb{N}^d)(\exists
m\in\mathbb{N}_0)(\sup_{x\in K}|\partial^\alpha f_\varphi(x)|\leq (R_\varphi)^{-m}\mbox{ a.e.}),
\end{equation}
and, similarly, $(f_\varphi)\in\mathcal{N}(\mathcal{E}(\Omega)^{\mathcal{D}_0})$ if (by definition)
\begin{equation}\label{E: rho-moderate}\notag
(\forall K\Subset\Omega)(\forall\alpha\in\mathbb{N}^d)(\forall
p\in\mathbb{N})(\sup_{x\in K}|\partial^\alpha f_\varphi(x)|\leq (R_\varphi)^{p}\mbox{ a.e.}),
\end{equation}
respectively. Here  $\partial^\alpha f_\varphi(x)$ stands for the
$\alpha$-partial derivative  of
$f_\varphi(x)$ with respect to $x$ and ``a.e'' stands
(as before) for ``almost everywhere'' (Definition~\ref{D: Almost Everywhere}).  We define the factor ring 
$\widehat{\mathcal{E}(\Omega)^{\mathcal{D}_0}}=:
\mathcal{M}(\mathcal{E}(\Omega)^{\mathcal{D}_0})/\mathcal{N}\left(\mathcal{E}(\Omega)^
{\mathcal{D}_0}\right)$
and we denote by
$\widehat{f_\varphi}\in\widehat{\mathcal{E}(\Omega)^{\mathcal{D}_0}}$ the equivalence
class of the net $(f_\varphi)\in\mathcal{M}(\mathcal{E}(\Omega)^{\mathcal{D}_0})$. We call the
elements of  $\widehat{\mathcal{E}(\Omega)^{\mathcal{D}_0}}$ \textbf{asymptotic functions} on $\Omega$.
More generally, if $\mathcal{S}\subseteq\mathcal{E}(\Omega)^{\mathcal{D}_0}$, we let 
$\widehat{\mathcal{S}}=:\left\{\widehat{f_\varphi} :
(f_\varphi)\in\mathcal{S}\cap\mathcal{M}(\mathcal{E}(\Omega)^{\mathcal{D}_0})\right\}$. 
\item We supply  $\widehat{\mathcal{E}(\Omega)^{\mathcal{D}_0}}$ with the \textbf{ring
operations and partial differentiation} of any order inherited from
$\mathcal{E}(\Omega)$. Also, for every asymptotic number
$\widehat{A_\varphi}\in\widehat{\mathbb{C}^{\mathcal{D}_0}}$ and every asymptotic function
$\widehat{f_\varphi}\in\widehat{\mathcal{E}(\Omega)^{\mathcal{D}_0}}$ we
define the \textbf{product} $\widehat{A_\varphi}\,
\widehat{f_\varphi}\in\widehat{\mathcal{E}(\Omega)^{\mathcal{D}_0}}$ by 
$\widehat{A_\varphi}\, \widehat{f_\varphi}=\widehat{A_\varphi\, f_\varphi}$.
\item We define the \textbf{pairing} between $\widehat{\mathcal{E}(\Omega)^{\mathcal{D}_0}}$ and
$\mathcal{D}(\Omega)$ by the formula $(\widehat{f_\varphi}|\tau)=$
$\widehat{(f_\varphi|\tau)}$, where 
$(f_\varphi|\tau)=: \int_\Omega f_\varphi(x)\tau(x)\, dx$. 
\item\label{No: Weakly Equal} We say that an asymptotic function
$\widehat{f_\varphi}\in\widehat{\mathcal{E}(\Omega)^{\mathcal{D}_0}}$  is \textbf{weakly equal to zero} in
$\widehat{\mathcal{E}(\Omega)^{\mathcal{D}_0}}$, in symbol
$\widehat{f_\varphi}\cong 0$, if $(\widehat{f_\varphi}|\tau)=0$ for all
$\tau\in\mathcal{D}(\Omega)$.  We say
that $\widehat{f_\varphi}, \widehat{g_\varphi}\in\widehat{\mathcal{E}(\Omega)^{\mathcal{D}_0}}$ are
\textbf{weakly equal}, in symbol
$\widehat{f_\varphi}\cong\widehat{g_\varphi}$, if
$(\widehat{f_\varphi}|\tau)=(\widehat{g_\varphi}|\tau)$ in
$\widehat{\mathbb{C}^{\mathcal{D}_0}}$ for all $\tau\in\mathcal{D}(\Omega)$. 

\item\label{No: Weakly Infinitesimal} We say that an asymptotic function
$\widehat{f_\varphi}\in\widehat{\mathcal{E}(\Omega)^{\mathcal{D}_0}}$  is \textbf{weakly infinitesimal} (or,
\textbf{associated to zero}), in symbol
$\widehat{f_\varphi}\approx 0$, if $(\widehat{f_\varphi}|\tau)\approx 0$ for all
$\tau\in\mathcal{D}(\Omega)$, where the latter $\approx$ is the infinitesimal relation on
$\widehat{\mathbb{C}^{\mathcal{D}_0}}$ (Definition~\ref{D: Canonical Infinitesimal in widehatRD0}). 
We
say that $\widehat{f_\varphi}, \widehat{g_\varphi}\in\widehat{\mathcal{E}(\Omega)^{\mathcal{D}_0}}$ are
\textbf{weakly infinitesimal} (or, \textbf{associated}), in symbol
$\widehat{f_\varphi}\approx\widehat{g_\varphi}$, if
$(\widehat{f_\varphi}|\tau)\approx (\widehat{g_\varphi}|\tau)$ for all $\tau\in\mathcal{D}(\Omega)$, where
in the latter formula $\approx$ stands for the infinitesimal relation in
$\widehat{\mathbb{C}^{\mathcal{D}_0}}$. 

\item Let $\widehat{f_\varphi}\in\widehat{\mathcal{E}(\Omega)^{\mathcal{D}_0}}$ and
let $\mathcal{O}$ be an open subset of $\Omega$. We define the \textbf{restriction}
$\widehat{f_\varphi}\rest\mathcal{O}\in\widehat{\mathcal{E}(\mathcal{O})^{\mathcal{D}_0}}$ of
$\widehat{f_\varphi}$ to $\mathcal{O}$ by
$\widehat{f_\varphi}\rest\mathcal{O}=\widehat{f_\varphi\rest\mathcal{O}}$, where
$f_\varphi\rest\mathcal{O}$ is the usual restriction of $f_\varphi$ to $\mathcal{O}$. The
\textbf{support}
$\supp(\widehat{f_\varphi})$ of $\widehat{f_\varphi}$ is the complement to $\Omega$ of the largest open
subset $G$ of $\Omega$ such that $\widehat{f_\varphi}\rest G=0$ in
$\widehat{\mathcal{E}(G)^{\mathcal{D}_0}}$. 
\item Let $\Omega, \Omega^\prime\in\mathcal{T}^d$ and
$\psi\in\Diff(\Omega^\prime, \Omega)$ be a diffeomorphism. For every
$\widehat{f_\varphi}\in\widehat{\mathcal{E}(\Omega)^{\mathcal{D}_0}}$ we define the \textbf{composition}
(or, change of variables)
$\widehat{f_\varphi}\circ\psi\in\widehat{\mathcal{E}(\Omega^\prime)^{\mathcal{D}_0}}$ by the formula
$\widehat{f_\varphi}\circ\psi=\widehat{f_\varphi\circ\psi}$, where $f_\varphi\circ\psi$ stands  for the 
usual composition  between $f_\varphi$ and $\psi$.
\end{enumerate}
\end{definition} 

	 It is clear that $\mathcal{M}(\mathcal{E}(\Omega)^{\mathcal{D}_0})$ is a differential ring and
$\mathcal{N}(\mathcal{E}(\Omega)^{\mathcal{D}_0})$ is a differential ideal in
$\mathcal{M}(\mathcal{E}(\Omega)^{\mathcal{D}_0})$. Thus
$\widehat{\mathcal{E}(\Omega)^{\mathcal{D}_0}}$ is a \textbf{differential ring}. We leave to the reader to
verify that the product $\widehat{A_\varphi}\, \widehat{f_\varphi}$ is correctly defined. Thus we have the
following result:
\begin{theorem}[Differential Algebra]\label{T: Differential Algebra}
$\widehat{\mathcal{E}(\Omega)^{\mathcal{D}_0}}$ is a
\textbf{differential algebra over the field} $\widehat{\mathbb{C}^{\mathcal{D}_0}}$. 
\end{theorem}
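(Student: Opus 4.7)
The plan is to verify what remains after the authors' preceding remark: namely, that the scalar multiplication $\widehat{A_\varphi}\cdot\widehat{f_\varphi}=\widehat{A_\varphi\,f_\varphi}$ is well-defined and gives $\widehat{\mathcal{E}(\Omega)^{\mathcal{D}_0}}$ the structure of a unital module over the field $\widehat{\mathbb{C}^{\mathcal{D}_0}}$, in which the already-established ring operations and partial derivatives are compatible in the usual sense (Leibniz rule, $\widehat{\mathbb{C}^{\mathcal{D}_0}}$-linearity of $\partial^\alpha$, associativity of external and internal multiplication).

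First, I would establish the three inclusions that allow the external product to descend to the quotient: (a) $\mathcal{M}(\mathbb{C}^{\mathcal{D}_0})\cdot\mathcal{M}(\mathcal{E}(\Omega)^{\mathcal{D}_0})\subseteq \mathcal{M}(\mathcal{E}(\Omega)^{\mathcal{D}_0})$; (b) $\mathcal{N}(\mathbb{C}^{\mathcal{D}_0})\cdot\mathcal{M}(\mathcal{E}(\Omega)^{\mathcal{D}_0})\subseteq\mathcal{N}(\mathcal{E}(\Omega)^{\mathcal{D}_0})$; and (c) $\mathcal{M}(\mathbb{C}^{\mathcal{D}_0})\cdot\mathcal{N}(\mathcal{E}(\Omega)^{\mathcal{D}_0})\subseteq\mathcal{N}(\mathcal{E}(\Omega)^{\mathcal{D}_0})$. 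Since each $A_\varphi$ is a scalar (independent of $x$), one has the pointwise identity $\partial^\alpha(A_\varphi f_\varphi)=A_\varphi\,\partial^\alpha f_\varphi$. Hence, given $K\Subset\Omega$ and $\alpha\in\mathbb{N}_0^d$, the bounds $|A_\varphi|\leq (R_\varphi)^{-m_1}$ a.e.\ and $\sup_{x\in K}|\partial^\alpha f_\varphi(x)|\leq (R_\varphi)^{-m_2}$ a.e.\ combine, using the closure of $\mathcal{U}$ under finite intersection (Lemma~\ref{L: List of Properties of U}, \#\ref{No: F2}), to yield $\sup_{x\in K}|\partial^\alpha(A_\varphi f_\varphi)(x)|\leq (R_\varphi)^{-m_1-m_2}$ a.e., proving (a). Cases (b) and (c) are the same argument with one bound replaced by an arbitrarily small power $(R_\varphi)^{p+m}$; here one fixes $p\in\mathbb{N}$, uses the uniform modulus $m$ of the moderate factor, and applies the negligibility bound at exponent $p+m$.

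Second, the module axioms --- distributivity in both arguments, associativity $(\widehat{A_\varphi}\widehat{B_\varphi})\widehat{f_\varphi}=\widehat{A_\varphi}(\widehat{B_\varphi}\widehat{f_\varphi})$, the identity $\widehat{1}\cdot\widehat{f_\varphi}=\widehat{f_\varphi}$, and compatibility $\widehat{A_\varphi}(\widehat{f_\varphi}\widehat{g_\varphi})=(\widehat{A_\varphi}\widehat{f_\varphi})\widehat{g_\varphi}$ with the internal ring product --- all transfer directly from the corresponding representative-wise identities in $\mathcal{M}(\mathcal{E}(\Omega)^{\mathcal{D}_0})$, which hold pointwise in $\mathcal{E}(\Omega)$ for every $\varphi\in\mathcal{D}_0$. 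Likewise, each $\partial^\alpha$ is a $\widehat{\mathbb{C}^{\mathcal{D}_0}}$-linear derivation on $\widehat{\mathcal{E}(\Omega)^{\mathcal{D}_0}}$: the Leibniz rule on the ring $\widehat{\mathcal{E}(\Omega)^{\mathcal{D}_0}}$ is part of the ``differential ring'' statement recalled just before the theorem, and $\widehat{\mathbb{C}^{\mathcal{D}_0}}$-linearity follows once more from the pointwise identity $\partial^\alpha(A_\varphi f_\varphi)=A_\varphi\,\partial^\alpha f_\varphi$.

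The main (and essentially only) obstacle is the bookkeeping in Step 1, namely checking that ``a.e.'' bounds on scalar nets combine cleanly with uniform-on-compacta ``a.e.'' bounds on function nets; everything else is a formal passage of pointwise identities to equivalence classes. No tool beyond the filter properties of $\mathcal{U}$ and the commutativity of $\partial^\alpha$ with multiplication by a scalar constant is required.
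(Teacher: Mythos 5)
Your proposal is correct and follows exactly the route the paper intends: the paper merely observes that $\mathcal{M}(\mathcal{E}(\Omega)^{\mathcal{D}_0})$ is a differential ring with differential ideal $\mathcal{N}(\mathcal{E}(\Omega)^{\mathcal{D}_0})$ and leaves the well-definedness of the scalar product $\widehat{A_\varphi}\,\widehat{f_\varphi}=\widehat{A_\varphi f_\varphi}$ to the reader, which is precisely the verification you carry out via the inclusions (a)--(c) and the finite-intersection property of $\mathcal{U}$. Your filling-in of these routine estimates and of the module axioms is accurate, so there is nothing to correct.
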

\section{A Solution to the Problem of Multiplication of Schwartz Distributions}
\label{S: A Solution to the Problem of Multiplication of Schwartz Distributions}
\quad In this section we construct a {\em canonical} {\em embedding} $E_\Omega$ of the space
$\mathcal{D}^\prime(\Omega)$  of Schwartz distributions into the algebra of asymptotic functions
$\widehat{\mathcal{E}(\Omega)^{\mathcal{D}_0}}$. Thus $\widehat{\mathcal{E}(\Omega)^{\mathcal{D}_0}}$
becomes a full algebra of generalized functions of Colombeau type (see the Introduction).

	 The algebra of asymptotic functions
$\widehat{\mathcal{E}(\Omega)^{\mathcal{D}_0}}$ supplied with the embedding $E_\Omega$ offers a
solution to the problem of the multiplication of Schwartz distributions similar to but different from
Colombeau's solution (Colombeau~\cite{jfCol84a}). 

\begin{definition}[Embeddings]\label{D: Embeddings}  Let $\Omega$ be an open set of 
$\mathbb{R}^d$. 
\begin{enumerate} 
\item The \textbf{standard embedding}
$\sigma_\Omega: \mathcal{E}(\Omega)\to\widehat{\mathcal{E}(\Omega)^{\mathcal{D}_0}}$ is defined by
the constant nets, i.e. by the formula $\sigma_\Omega(f)=\widehat{f}$.
\item The \textbf{distributional embedding}
$E_\Omega: \mathcal{D}^\prime(\Omega)\to\widehat{\mathcal{E}(\Omega)^{\mathcal{D}_0}}$ is
defined by the formula $E_\Omega(T)=\widehat{T\circledast\varphi}$, where  
$T\circledast\varphi$ is the $\varphi$-regularization of $T\in \mathcal{D}^\prime(\Omega)$
(\# \ref{No: phi-regularization} in Examples~\ref{Exs: Nets and Distributions}).
\item The \textbf{classical function embedding} $E_\Omega\circ L_\Omega:
\mathcal{L}_{loc}(\Omega)\to\widehat{\mathcal{E}(\Omega)^{\mathcal{D}_0}}$ is defined by the formula
$(E_\Omega\circ L_\Omega)(f)=\widehat{f\circledast\varphi}$, where  $f\circledast\varphi$ is the
$\varphi$-regularization of $f\in \mathcal{L}_{loc}(\Omega)$ (\# \ref{No: Loc} in
Examples~\ref{Exs: Nets and Distributions}).
\end{enumerate}
\end{definition}

\begin{lemma}[Correctness]\label{L: Correctness} The constant nets are moderate in the sense that
$f\in\mathcal{E}(\Omega)$ implies $(f)\in\mathcal{M}(\mathcal{E}(\Omega)^{\mathcal{D}_0})$ 
(Section~\ref{S: Asymptotic Numbers and Asymptotic Functions}). Similarly
the $\varphi$-regularization of a Schwartz distribution (\# \ref{No: phi-regularization} in Examples~\ref{Exs:
Nets and Distributions}) is also a moderate net, i.e. $T\in\mathcal{D}^\prime(\Omega)$ implies
$(T\circledast\varphi)\in\mathcal{M}(\mathcal{E}(\Omega)^{\mathcal{D}_0})$.
\end{lemma}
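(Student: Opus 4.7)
The plan is to dispatch the two assertions separately, each reducing quickly to material already on hand.

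For the constant net $(f)$ associated with $f\in\mathcal{E}(\Omega)$, I would fix $K\Subset\Omega$ and $\alpha\in\mathbb{N}_0^d$ and set $M=\sup_{x\in K}|\partial^\alpha f(x)|$, a finite real constant. The key observation is Example~\ref{Ex: Radius of Support}: for every $n\in\mathbb{N}$ one has $R_\varphi\in\mathbb{R}_+$ and $R_\varphi<1/n$ almost everywhere. Choosing any integer $n>\max\{1,M\}$, it follows that $M<n<(R_\varphi)^{-1}$ a.e., so $m=1$ witnesses the moderateness estimate uniformly in $\alpha$ and $K$ (the same $m$ actually works for all $K$ and $\alpha$ once $n$ is chosen large enough relative to $M$, but this strength is not required).

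For the $\varphi$-regularization $(T\circledast\varphi)$ with $T\in\mathcal{D}^\prime(\Omega)$, essentially all the analytic content has been packaged into Lemma~\ref{L: Schwartz Distributions}: given $K\Subset\Omega$ and $\alpha\in\mathbb{N}_0^d$, it supplies integers $m,n\in\mathbb{N}_0$ such that
\[
\sup_{x\in K}\left|\partial^\alpha(T\circledast\varphi)(x)\right|\leq (R_\varphi)^{-m}
\]
for every $\varphi\in\mathcal{D}_n$. The only additional step is converting ``holds on $\mathcal{D}_n$'' into ``holds a.e.'' in the sense of Definition~\ref{D: Almost Everywhere}: since $\mathcal{D}_n\in\mathcal{U}$ by Theorem~\ref{T: Existence of Ultrafilter}, upward closure of the ultrafilter (Lemma~\ref{L: List of Properties of U}, item~\ref{No: F1}) forces the set $\{\varphi\in\mathcal{D}_0 : \sup_{x\in K}|\partial^\alpha(T\circledast\varphi)(x)|\leq(R_\varphi)^{-m}\}$ to belong to $\mathcal{U}$, which is precisely the required a.e.\ estimate.

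The main (indeed the only) obstacle was already overcome in the proof of Lemma~\ref{L: Schwartz Distributions}, where the seminorm continuity of $T$ on the test space is combined with the derivative bound $\sup_{x}|\partial^\beta\varphi(x)|\leq(R_\varphi)^{-2(|\beta|+d)}$ built into the definition of $\mathcal{D}_n$. Once that pointwise estimate is available on an element of the filter base, the present lemma reduces to invoking upward closure of $\mathcal{U}$; everything else is bookkeeping.
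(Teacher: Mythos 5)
Your proposal is correct and follows essentially the same route as the paper: the constant-net case is a triviality (the paper simply declares it clear, while you spell out the choice of exponent via $R_\varphi<1/n$ a.e.), and the distributional case is handled exactly as in the paper by quoting Lemma~\ref{L: Schwartz Distributions} on some $\mathcal{D}_n$ and then passing to the a.e.\ statement through $\mathcal{D}_n\in\mathcal{U}$ and upward closure of the ultrafilter. No gaps.
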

\begin{proof}  It is clear that the constant nets are moderate. To show the moderateness of
$(T\circledast\varphi)$, suppose that $K\Subset\Omega$ and $\alpha\in\mathbb{N}_0$. By Lemma~\ref{L:
Schwartz Distributions} there exist $m, n\in\mathbb{N}_0$ such that
$\mathcal{D}_n\subseteq\{\varphi\in\mathcal{D}_0 : 
(\forall x\in K)\left|\partial^\alpha(T\circledast\varphi)(x)\right|\leq (R_\varphi)^{-m}\}$ implying
$\{\varphi\in\mathcal{D}_0 : 
\sup_{x\in K}\left|\partial^\alpha(T\circledast\varphi)(x)\right|\leq (R_\varphi)^{-m}\}\in\mathcal{U}$, as
required.  
\end{proof} 
	
	Notice that the embedding $E_\Omega$ is {\em canonical} in the sense that it is uniquely defined in terms
already used in the definition of the family
$\left\{\widehat{\mathcal{E}(\Omega)^{\mathcal{D}_0}}\right\}_{\Omega\in\mathcal{T}^d}$
(Definition~\ref{D: Asymptotic Functions}).
\begin{theorem}[Properties of Embedding]\label{T: Properties of Embedding} Let $\Omega$ be an open set
of 
$\mathbb{R}^d$. Then:
\begin{description}
\item[(i)] We have $(E_\Omega\circ L_\Omega)(f)=\sigma_\Omega(f)$ for all
$f\in\mathcal{E}(\Omega)$. 
This can be summarized in the following commutative diagram:
\[ 
\begin{diagram} 
\node{\mathcal{E}(\Omega)} \arrow{e,t}{L_\Omega} \arrow{s,l}{\sigma_\Omega}
\node{\mathcal{D^\prime}(\Omega)} \arrow{sw,r}{E_{\Omega}} \\ 
\node{\widehat{\mathcal{E}(\Omega)^{\mathcal{D}_0}}}
\end{diagram} 
\] 

Consequently,
$\mathcal{E}(\Omega)$ and $(E_\Omega\circ L_\Omega)[\mathcal{E}(\Omega)]$ are isomorphic
differential algebras over $\mathbb{C}$. Also, $E_\Omega\circ
L_\Omega=\sigma_\Omega$ preserves the pairing between $\mathcal{E}(\Omega)$ and
$\mathcal{D}(\Omega)$ in the sense that 
\[
\int_\Omega
f(x)\tau(x)\, dx=\left(\sigma_\Omega(f)\left.\right|\tau\right)=\left((E_\Omega\circ
L_\Omega)(f)\left.\right|\tau\right),
\] 
for all $f\in\mathcal{E}(\Omega)$ and all
$\tau\in\mathcal{D}(\Omega)$. Consequently, $E_\Omega\circ
L_\Omega=\sigma_\Omega$ is injective.
\item[(ii)] $E_\Omega$ is $\mathbb{C}$-linear and it preserves the partial differentiation of any order in
$\mathcal{D}^\prime(\Omega)$. Also, 
$E_\Omega$ preserves the pairing between
$\mathcal{D}^\prime(\Omega)$ and $\mathcal{D}(\Omega)$ in the sense that
$\left(T\left.\right|\tau\right)=\left(E_\Omega(T)\left.\right|\tau\right)$ for all
$T\in\mathcal{D}^\prime(\Omega)$ and all $\tau\in\mathcal{D}(\Omega)$.
Consequently, $E_\Omega$ is injective. 

\item[(iii)] $E_\Omega\circ L_\Omega$ is $\mathbb{C}$-linear. Also, $E_\Omega\circ L_\Omega$
 preserves the pairing between $\mathcal{L}_{loc}(\Omega)$ and $\mathcal{D}(\Omega)$
in the sense that 
\[
\int_\Omega f(x)\tau(x)\, dx=\left((E_\Omega\circ L_\Omega)(f)\left.\right|\tau\right),
\] 
for all $f\in\mathcal{L}_{loc}(\Omega)$ and  all  $\tau\in\mathcal{D}(\Omega)$. Consequently,
$E_\Omega\circ L_\Omega$ is injective.
\item[(iv)] Each of the above embeddings: $\sigma_\Omega, E_\Omega$ and $E_\Omega\circ
L_\Omega$,  is \textbf{sheaf preserving} in the sense that it preserves the restriction to an open subset. 
\end{description}
	We \textbf{summarize} all of the above
in $\mathcal{E}(\Omega)\subset\mathcal{L}_{loc}(\Omega)\subset\mathcal{D}^\prime(\Omega)\subset
\widehat{\mathcal{E}(\Omega)^{\mathcal{D}_0}}$, where: (a) $\mathcal{E}(\Omega)$ is a \textbf{differential
subalgebra} of $\widehat{\mathcal{E}(\Omega)^{\mathcal{D}_0}}$ over $\mathbb{C}$; (b)
$\mathcal{L}_{loc}(\Omega)$  is a \textbf{vector subspace} of
$\widehat{\mathcal{E}(\Omega)^{\mathcal{D}_0}}$ over
$\mathbb{C}$ and (c)
$\mathcal{D}^\prime(\Omega)$  is a \textbf{differential
vector subspace} of $\widehat{\mathcal{E}(\Omega)^{\mathcal{D}_0}}$ over $\mathbb{C}$. We shall often
write simply $T$ instead of the more precise $E_\Omega(T)$ for a Schwartz distribution in the
framework of $\widehat{\mathcal{E}(\Omega)^{\mathcal{D}_0}}$. 
\end{theorem}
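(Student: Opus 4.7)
The plan is to verify the four parts in order, using the three technical lemmas of Section 3 as the main engine. The general strategy throughout is: for each embedding, work at the level of representing nets, use the fact that $\mathcal{D}_n\in\mathcal{U}$ to conclude that any estimate holding on some $\mathcal{D}_n$ holds almost everywhere, and then pass to equivalence classes in $\widehat{\mathcal{E}(\Omega)^{\mathcal{D}_0}}$ or $\widehat{\mathbb{C}^{\mathcal{D}_0}}$.

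For part (i), I would show that for each $f\in\mathcal{E}(\Omega)$, the difference net $(f\circledast\varphi-f)$ lies in $\mathcal{N}(\mathcal{E}(\Omega)^{\mathcal{D}_0})$. This is exactly what Lemma~\ref{L: Cinfinity-Functions} gives: for every $K\Subset\Omega$, every $\alpha$, and every $p\in\mathbb{N}$, there is $n$ with $\sup_{x\in K}|\partial^\alpha(f\circledast\varphi-f)(x)|\le R_\varphi^{\,p}$ for all $\varphi\in\mathcal{D}_n$, hence a.e. Thus $\widehat{f\circledast\varphi}=\widehat{f}$, which is precisely $(E_\Omega\circ L_\Omega)(f)=\sigma_\Omega(f)$, and the commutative diagram drops out. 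The pairing identity is then just the definition of the pairing applied to the constant net, and injectivity of $\sigma_\Omega$ follows because $\int f\tau=0$ for all $\tau\in\mathcal{D}(\Omega)$ forces $f=0$.

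For (ii), $\mathbb{C}$-linearity of $E_\Omega$ is inherited from the fact that $T\mapsto T\circledast\varphi$ is linear pointwise in $\varphi$. For preservation of $\partial^\alpha$, the statement $E_\Omega(\partial^\alpha T)=\partial^\alpha E_\Omega(T)$ amounts to $(\partial^\alpha T\circledast\varphi)-\partial^\alpha(T\circledast\varphi)\in\mathcal{N}(\mathcal{E}(\Omega)^{\mathcal{D}_0})$; but Lemma~\ref{L: Localization}(c) gives that for every $K\Subset\Omega$ these two nets agree identically on $K$ for all $\varphi\in\mathcal{D}_n$ with $n$ sufficiently large, so the difference is not just negligible but eventually zero a.e. For the pairing, Lemma~\ref{L: Pairing} yields $|(T\circledast\varphi|\tau)-(T|\tau)|\le R_\varphi^{\,p}$ a.e.\ for every $p$, so $\widehat{(T\circledast\varphi|\tau)}=\widehat{(T|\tau)}=(T|\tau)$ under the embedding $\mathbb{C}\hookrightarrow\widehat{\mathbb{C}^{\mathcal{D}_0}}$. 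Injectivity of $E_\Omega$ then follows: if $E_\Omega(T)=0$ then $(T|\tau)=(E_\Omega(T)|\tau)=0$ for all $\tau$, so $T=0$ in $\mathcal{D}'(\Omega)$. Part (iii) is the restriction of (ii) to the subspace $L_\Omega[\mathcal{L}_{loc}(\Omega)]\subset\mathcal{D}'(\Omega)$: $\mathbb{C}$-linearity is inherited, and the pairing identity together with the Du Bois-Reymond lemma gives injectivity.

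The subtlest piece is (iv), sheaf preservation. For $\sigma_\Omega$ it is immediate since $(f|_\mathcal{O})=f|_\mathcal{O}$ at the level of constant nets. For $E_\Omega$, given $\mathcal{O}\subseteq\Omega$ open and $K\Subset\mathcal{O}$, I would argue that for $\varphi\in\mathcal{D}_n$ with $n$ large enough (depending on $d(K,\partial\mathcal{O})$), both cut-offs $C_{\Omega,\varphi}$ and $C_{\mathcal{O},\varphi}$ equal $1$ on $K+\overline{B(0,R_\varphi)}$, and the supports of the translates $\varphi(x-\cdot)$ for $x\in K$ lie in $\mathcal{O}$; hence $(T\circledast\varphi)(x)=((T|_\mathcal{O})\circledast\varphi)(x)$ for all $x\in K$ and all such $\varphi$. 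This gives $E_\Omega(T)|_\mathcal{O}=E_\mathcal{O}(T|_\mathcal{O})$ in $\widehat{\mathcal{E}(\mathcal{O})^{\mathcal{D}_0}}$, and the same localization shows sheaf preservation for $E_\Omega\circ L_\Omega$. The main obstacle, as expected, is this geometric bookkeeping in (iv): one must verify that the choice of the cut-off $\widetilde{\Omega}_\varphi$ in Examples~\ref{Exs: Nets and Distributions}(\ref{No: Cut-Off}) was indeed made so that restriction to an open subset and regularization commute modulo negligible nets on every compact set.
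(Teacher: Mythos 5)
Your proposal is correct and follows essentially the same route as the paper: Lemma~\ref{L: Cinfinity-Functions} for (i), Lemma~\ref{L: Localization} and Lemma~\ref{L: Pairing} for (ii), specialization to $T=T_f$ for (iii), all reduced to negligibility of the relevant difference nets via the membership $\mathcal{D}_n\in\mathcal{U}$. Your sketch of (iv) merely fills in the localization bookkeeping that the paper explicitly leaves to the reader, and it does so correctly.
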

\begin{proof}   (i) Suppose that $K\Subset\Omega$, $\alpha\in\mathbb{N}^d_0$ and $p\in\mathbb{N}$
(are chosen arbitrarily). By Lemma~\ref{L: Cinfinity-Functions} there exist $n\in\mathbb{N}_0$ such that
\[
\mathcal{D}_n\subseteq\left\{\varphi\in\mathcal{D}_0 : 
\sup_{x\in K}\left|\partial^\alpha(f\circledast\varphi)(x)-\partial^\alpha
f(x)\right|\leq (R_\varphi)^{p}\right\}.
\]
Thus $\{\varphi\in\mathcal{D}_0 :\sup_{x\in K}
\left| \partial^\alpha(f\circledast\varphi)(x)-\partial^\alpha f(x) \right|\leq
(R_\varphi)^p\}\in\mathcal{U}$. The latter means
that the net
$(f\circledast\varphi-f)$ is negligible (Definition~\ref{D: Asymptotic Functions}) thus $(E_\Omega\circ
L_\Omega)(f)=\widehat{f\circledast\varphi}=\widehat{f}=\sigma_\Omega(f)$ as required. Consequently, we
have $(E_\Omega\circ L_\Omega)[\mathcal{E}(\Omega)]=\sigma_\Omega[\mathcal{E}(\Omega)]$.
Thus $\mathcal{E}(\Omega)$ and $(E_\Omega\circ L_\Omega)[\mathcal{E}(\Omega)]$ are 
isomorphic differential algebras because $\mathcal{E}(\Omega)$ and
$\sigma_\Omega[\mathcal{E}(\Omega)]$ are (obviously) isomorphic differential algebras. Also,
$E_\Omega\circ L_\Omega$ preserves the pairing because
$\sigma_\Omega$ preserves (obviously) the pairing.

	(ii)  $\Sigma_\Omega$ is $\mathbb{C}$-linear because the mapping 
$T\to T\circledast\varphi$ is $\mathbb{C}$-linear. To show the preservation of partial differentiation we have
to show that for every multi-index $\beta\in\mathbb{N}^d_0$ the net
$\left(\partial^\beta T\circledast\varphi-\partial^\beta(T\circledast\varphi)\right)$ is negligible
(Definition~\ref{D: Asymptotic Functions}). This follows easily from Lemma~\ref{L: Localization} similarly to
(i) above. To show that
$E_\Omega$ preserves the pairing, we have to  show that for any test function $\tau$ 
the net $A_\varphi=:\left(T\circledast\varphi\left.\right|\tau\right)-\left(T\left.\right|\tau\right)$ is
negligible (Definition~\ref{D: Asymptotic Numbers}). The latter
 follows easily from
Lemma~\ref{L: Pairing}.

		(iii) $(E_\Omega\circ L_\Omega)$ is $\mathbb{C}$-linear because the mapping $f\to f\circledast\varphi$ is
$\mathbb{C}$-linear. The preserving of pairing follows from (ii) in the particular case $T=T_f$.

	(iv) The preserving of the restriction on an open subset follows easily
from the definition and we leave the  details to the reader.
\end{proof} 

 We should mention that if $f\in\mathcal{E}(\Omega)$ and
$T\in\mathcal{D}^\prime(\Omega)$, then $E_\Omega(f)E_\Omega(T)=E_\Omega(fT)$ is false in general. That
means that the multiplication in the algebra in
$\widehat{\mathcal{E}(\Omega)^{\mathcal{D}_0}}$ does not reproduce the Schwartz multiplication in 
$\mathcal{D}^\prime(\Omega)$ (multiplication by duality). Similarly, let $\mathcal{C}(\Omega)$ denote the
class of continuos functions from $\Omega$ to $\mathbb{C}$. If $g, h\in\mathcal{C}(\Omega)$, then
$E_\Omega(g)E_\Omega(h)=E_\Omega(gh)$ is also false in general. That means that the multiplication in the
algebra in
$\widehat{\mathcal{E}(\Omega)^{\mathcal{D}_0}}$ does not reproduce the usual multiplication in
$\mathcal{C}(\Omega)$. Of course, all these are inevitable in view of the Schwartz impossibility results
(Schwartz~\cite{lSchwartz54}). For a discussion we refer to (Colombeau~\cite{jfCol92}, p. 8).
Instead, we  have a  somewhat weaker result.
\begin{theorem}[Weak Preservation]\label{T: Weak Preservation} Let $T\in\mathcal{D}^\prime(\Omega),
f\in\mathcal{E}(\Omega)$ and
$g, h\in\mathcal{C}(\Omega)$. Then:
\begin{description}
\item[(i)] $E_\Omega(f)E_\Omega(T)\cong E_\Omega(fT)$ (Definition~\ref{D: Asymptotic Functions},
\#\ref{No: Weakly Equal}), i.e.
$\left(E_\Omega(f)E_\Omega(T)\left.\right|\tau\right)=\left(E_\Omega(fT)\left.\right|\tau\right)$ for all 
$\tau\in\mathcal{D}(\Omega)$.
\item[(ii)] $E_\Omega(g)E_\Omega(h)\approx E_\Omega(gh)$ (Definition~\ref{D: Asymptotic Functions},
\#\ref{No: Weakly Infinitesimal}), i.e.
$\left(E_\Omega(g)E_\Omega(h)\left.\right|\tau\right)\approx\left(E_\Omega(gh)\left.\right|\tau\right)$ for
all  $\tau\in\mathcal{D}(\Omega)$, where $\approx$ in the latter formula stands for the infinitesimal relation
in the field $\widehat{\mathbb{C}^{\mathcal{D}_0}}$.
\end{description}
\end{theorem}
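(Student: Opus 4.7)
The plan is to unfold both sides using the explicit embedding $E_\Omega(T)=\widehat{T\circledast\varphi}$ and reduce each claim to showing that a specific net in $\mathbb{C}^{\mathcal{D}_0}$ is negligible (for (i)) or merely infinitesimal (for (ii)). For (i) I would exploit that $E_\Omega(f)=\widehat f$ already holds in $\widehat{\mathcal{E}(\Omega)^{\mathcal{D}_0}}$ by Theorem~\ref{T: Properties of Embedding}(i), so the product $E_\Omega(f)E_\Omega(T)$ admits the representative net $(f(T\circledast\varphi))$; for (ii) no such simplification is available and the pointwise difference of convolutions must be controlled directly.

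For (i), with $f\tau\in\mathcal{D}(\Omega)$, one has
\[
(E_\Omega(f)E_\Omega(T)\mid\tau)=\widehat{\int_\Omega f(x)(T\circledast\varphi)(x)\tau(x)\,dx}=\widehat{(T\circledast\varphi\mid f\tau)}.
\]
Lemma~\ref{L: Pairing} applied to the distribution $T$ with test function $f\tau$ shows that this class equals the constant class $\widehat{(T\mid f\tau)}$ in $\widehat{\mathbb{C}^{\mathcal{D}_0}}$. Similarly, Lemma~\ref{L: Pairing} applied to $fT$ with test function $\tau$ yields $(E_\Omega(fT)\mid\tau)=\widehat{(fT\mid\tau)}$. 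The classical Schwartz identity $(T\mid f\tau)=(fT\mid\tau)$ (definition of multiplication of a distribution by a smooth function) then closes the case.

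For (ii), set $K=\supp(\tau)$ and decompose the integrand pointwise as
\[
(g\circledast\varphi)(h\circledast\varphi)-(gh)\circledast\varphi=\bigl[(g\circledast\varphi)-g\bigr](h\circledast\varphi)+g\bigl[(h\circledast\varphi)-h\bigr]-\bigl[(gh)\circledast\varphi-gh\bigr].
\]
The key soft input is a continuous-function analogue of Lemma~\ref{L: Cinfinity-Functions}: for any $u\in\mathcal{C}(\Omega)$ and any $K\Subset\Omega$, Lemma~\ref{L: Localization} gives $u\circledast\varphi=u\star\varphi$ on $K$ for $\varphi$ in some $\mathcal{D}_n$; combined with uniform continuity of $u$ on a slightly enlarged compact set and the bound $\int|\varphi|\leq 1+1/n$ built into the definition of $\mathcal{D}_n$, one obtains that for every $\epsilon>0$ there exists $m\in\mathbb{N}$ with $\sup_K|u\circledast\varphi-u|<\epsilon$ for all $\varphi\in\mathcal{D}_m$, so the inequality holds a.e. Applying this to $u\in\{g,h,gh\}$, together with the trivial a.e.\ bound $\sup_K|h\circledast\varphi|\leq\sup_K|h|+1$, each summand in the decomposition produces an a.e.\ bound of the form ``arbitrarily small constant times bounded.'' Integrating against $\tau$ makes the whole difference infinitesimal in $\widehat{\mathbb{C}^{\mathcal{D}_0}}$.

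The main obstacle is part (ii): the loss of smoothness forces abandoning the polynomial-rate estimates of Lemma~\ref{L: Cinfinity-Functions} in favor of the softer modulus-of-continuity argument sketched above. This is also precisely the reason why only association ($\approx$), not weak equality ($\cong$), can be obtained for products of merely continuous functions; any attempt to push (ii) to a $\cong$ statement would run into the Schwartz impossibility results already alluded to before the theorem.
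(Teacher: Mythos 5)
Your proposal is correct and takes essentially the same route as the paper: part (i) is the same chain of identities through the representative net $f\,(T\circledast\varphi)$, Lemma~\ref{L: Pairing} (equivalently, the pairing preservation of Theorem~\ref{T: Properties of Embedding}) and the duality definition of $fT$, while part (ii) rests on exactly the uniform-continuity estimates that the paper compresses into ``elementary observation.'' The only cosmetic difference is that you absorb the term $(gh)\circledast\varphi-gh$ into a three-term decomposition, whereas the paper's two-term splitting of $(g\circledast\varphi)(h\circledast\varphi)-gh$ leaves the comparison of $E_\Omega(gh)$ with $gh$ to the already-established pairing preservation for $\mathcal{L}_{loc}$-functions.
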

\begin{proof} (i) We denote $f_{\varphi,\tau}:=\left(f (T\conv\varphi)\left.\right|\tau\right)=
\left(T\conv\varphi\left.\right|f\tau\right)$ and calculate\newline
$\left(E_\Omega(f)E_\Omega(T)\left.\right|\tau\right)=
\left(\widehat{f}\;\widehat{T\conv\varphi}\left.\right|\tau\right)=
\left(\widehat{f (T\conv\varphi)}\left.\right|\tau\right)=
\widehat{f_{\varphi,\tau}}=\left(\widehat{T\conv\varphi} \left.\right|f\tau\right)=
\left(T \left.\right|f\tau\right)=\left(fT \left.\right|\tau\right)=
\left(E_\Omega(fT)\left.\right|\tau\right)$ as required.

	(ii) This follows from the fact that for each $n\in\mathbb{N}$ and $K\Subset\Omega$ we have $\sup_{x\in
K}|(g\circledast\varphi-g)(x)h(x)|<1/n$ and $\sup_{x\in
K}|(g\circledast\varphi)(x)(h\circledast\varphi-h)(x)|<1/n$ a.e. in $\mathcal{D}_0$ (Definition~\ref{D:
Almost Everywhere}) which can be seen by elementary observation.
\end{proof} 

	Let  $\Omega, \Omega^\prime\in\mathcal{T}^d$ and 
$\psi\in\Diff(\Omega^\prime, \Omega)$. Then $E_\Omega(T)\circ\psi
= E_{\Omega^\prime}(T\circ\psi)$ does not generally hold in
$\widehat{\mathcal{E}(\Omega)^{\mathcal{D}_0}}$. That means that the family
of algebras $\{\widehat{\mathcal{E}(\Omega)^{\mathcal{D}_0}}\}_{\Omega\in\mathcal{T}^d}$ is not
diffeomorphism invariant (see the Introduction). Here $T\circ\psi$ stands for the composition in the sense of the
distribution theory (Vladimirov~\cite{vVladimirov}). Instead, we have the following weaker result. 
\begin{theorem}[Diffeomorphisms]\label{T: Diffeomorphisms} $E_\Omega$ weakly preserves the composition
with diffeomorphisms in the sense that for every $\Omega, \Omega^\prime\in\mathcal{T}^d$, every
$T\in\mathcal{D}^\prime(\Omega)$ and every
$\psi\in\Diff(\Omega^\prime, \Omega)$ we have  $E_\Omega(T)\circ\psi
\cong E_{\Omega^\prime}(T\circ\psi)$, i.e. $\left(E_\Omega(T)\circ\psi
\left.\right|\tau\right)=\left(E_{\Omega^\prime}(T\circ\psi)
\left.\right|\tau\right)$ for all test functions $\tau\in\mathcal{D}(\Omega^\prime)$. 
\end{theorem}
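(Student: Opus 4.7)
The plan is to unpack the pairing on both sides of the claimed weak equality, perform a classical change of variables on the left, and then apply Lemma~\ref{L: Pairing} twice to reduce each $\varphi$-regularization pairing to an ordinary distributional pairing. Once both sides are expressed purely in terms of Schwartz's pairing, the identity becomes the defining formula for the distributional composition $T\circ\psi$ (Vladimirov~\cite{vVladimirov}).

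Fix $\tau\in\mathcal{D}(\Omega')$ and introduce the pushed-forward test function
$\tilde\tau(x)=:\tau(\psi^{-1}(x))\,|\det D\psi^{-1}(x)|$. Since $\psi\in\Diff(\Omega',\Omega)$ and $\supp(\tau)\Subset\Omega'$, we have $\tilde\tau\in\mathcal{D}(\Omega)$, with $\supp(\tilde\tau)=\psi(\supp\tau)\Subset\Omega$. Each $T\circledast\varphi$ is smooth on $\Omega$, so the classical change of variables $x=\psi(y)$ gives, for every $\varphi\in\mathcal{D}_0$,
$$\int_{\Omega'}(T\circledast\varphi)(\psi(y))\,\tau(y)\,dy=\int_\Omega (T\circledast\varphi)(x)\,\tilde\tau(x)\,dx=(T\circledast\varphi\,|\,\tilde\tau).$$
By Definition~\ref{D: Asymptotic Functions} and Definition~\ref{D: Embeddings} this identifies
$(E_\Omega(T)\circ\psi\,|\,\tau)$ with $\widehat{(T\circledast\varphi\,|\,\tilde\tau)}$ in $\widehat{\mathbb{C}^{\mathcal{D}_0}}$. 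Lemma~\ref{L: Pairing} applied to $T\in\mathcal{D}'(\Omega)$ and $\tilde\tau\in\mathcal{D}(\Omega)$ shows that the net $(T\circledast\varphi\,|\,\tilde\tau)-(T\,|\,\tilde\tau)$ is negligible, hence $\widehat{(T\circledast\varphi\,|\,\tilde\tau)}=(T\,|\,\tilde\tau)$. Symmetrically, Lemma~\ref{L: Pairing} applied to $T\circ\psi\in\mathcal{D}'(\Omega')$ and $\tau\in\mathcal{D}(\Omega')$ yields $(E_{\Omega'}(T\circ\psi)\,|\,\tau)=(T\circ\psi\,|\,\tau)$. The classical identity $(T\,|\,\tilde\tau)=(T\circ\psi\,|\,\tau)$ is precisely Schwartz's definition of distributional composition, and both sides therefore agree as elements of $\widehat{\mathbb{C}^{\mathcal{D}_0}}$.

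The argument is essentially routine because the weak equality $\cong$, in contrast to genuine equality in $\widehat{\mathcal{E}(\Omega)^{\mathcal{D}_0}}$, is tested only against classical $\mathcal{D}$-test functions, so no uniform-in-$\varphi$ diffeomorphism estimate is needed; Lemma~\ref{L: Pairing} already closes the gap between the regularized and distributional sides. The only subtle point to verify carefully is that $\tilde\tau$ genuinely lies in $\mathcal{D}(\Omega)$ so that Lemma~\ref{L: Pairing} may be applied to it, and that the change of variables is performed pointwise in $\varphi$ on the \emph{smooth} function $T\circledast\varphi$, not on $T$ itself. A genuine (not merely weak) diffeomorphism invariance $E_\Omega(T)\circ\psi=E_{\Omega'}(T\circ\psi)$ would instead demand that the net $(T\circledast\varphi)\circ\psi-(T\circ\psi)\circledast\varphi$ be negligible in $\mathcal{M}(\mathcal{E}(\Omega')^{\mathcal{D}_0})$, which cannot be expected from the present (non-diffeomorphism-invariant) construction, and this explains why only the weak version is proved here.
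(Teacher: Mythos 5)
Your proof is correct and is essentially the argument the paper intends: the paper's own proof is just the remark that the statement follows analogously to Theorem~\ref{T: Weak Preservation}(i), and your version—moving the diffeomorphism onto the test function by the classical change of variables to get $\tilde\tau\in\mathcal{D}(\Omega)$, then applying Lemma~\ref{L: Pairing} (pairing preservation) on both sides together with the duality definition of $T\circ\psi$—is exactly that analogous argument with the details filled in.
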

\begin{proof}  The proof is analogous to the proof of part (i) of Theorem~\ref{T: Weak Preservation}
and we leave the details to the reader.
\end{proof} 
\begin{examples} 
\begin{enumerate}
\item Let $\delta\in\mathcal{D}^\prime(\mathbb{R}^d)$ be the Dirac delta function
(delta distribution) on $\mathbb{R}^d$. For  its $\varphi$-regularization (\#\ref{No: phi-regularization} in
Examples~\ref{Exs: Nets and Distributions}) we have
$\delta_\varphi=\delta\conv\varphi=\delta\star\varphi=\varphi$. Thus 
$E_{\mathbb{R}^d}(\delta)=\widehat{\varphi}$.  Similarly,
$E_{\mathbb{R}^d}(\partial^\alpha\delta)=\widehat{\partial^\alpha\varphi}$.
\item We have
$\left(E_{\mathbb{R}^d}(\delta)\right)^n=(\widehat{\varphi})^n=\widehat{\varphi^n},\, n=1, 2,\dots$. We
express this result simply as
$\delta^n=\widehat{\varphi^n}$. Recall that 
the powers $\delta^n$ are meaningless within $\mathcal{D}^\prime(\mathbb{R}^d)$ for
$n\geq2$.
\item Let $H(x)$ be the Heaviside step function on $\mathbb{R}$. For  its $\varphi$-regularization 
(\#\ref{No: Loc} in Examples~\ref{Exs: Nets and Distributions}) we have
$H_\varphi=(H\conv\varphi)$. Let $K\Subset\mathbb{R}$. We observe that for every $x\in K$ we have 
$H_\varphi(x)=(H\star\varphi)(x)=\int_{-\infty}^x\varphi(t)\, dt$ a.e.  in $\mathcal{D}_0$ (Definition~\ref{D:
Almost Everywhere}). Thus
$E_{\mathbb{R}}(H)=\widehat{\int_{-\infty}^x\varphi(t)\, dt}$.  We express this result simply
as  $H(x)=\widehat{\int_{-\infty}^x\varphi(t)\, dt}$. Since
the embedding
$E_\mathbb{R}$ preserves the differentiation, we have $H^\prime=\delta$. 
\item We have
$E_{\mathbb{R}}(H)E_{\mathbb{R}}(\delta)=\widehat{\varphi}\left(\widehat{\int_{-\infty}^x\varphi(t)\,
dt}\right)=\widehat{\varphi H_\varphi}$. We express this result simply as
$H\delta=\widehat{\varphi H_\varphi}$. Recall that the product
$H\delta$ is not meaningful within $\mathcal{D}^\prime(\mathbb{R})$. 
\item We have
$\left(E_{\mathbb{R}}(H)\right)^n=\left(\widehat{\int_{-\infty}^x\varphi(t)\,
dt}\right)^n=\widehat{(H_\varphi)^n}$ which we write simply as $H^n=\widehat{(H_\varphi)^n}$.
Since $\widehat{\mathcal{E}(\mathbb{R})^{\mathcal{D}_0}}$ is a differential algebra, we can apply the
chain rule: $(H^n)^\prime=nH^{n-1}\delta$ which also is meaningless in
$\mathcal{D}^\prime(\mathbb{R})$ for $n\geq2$. 
\item Notice that $H^n\not=H,\, n=2, 3,\dots$ in
$\widehat{\mathcal{E}(\mathbb{R})^{\mathcal{D}_0}}$. Actually
$H^n=H,\, n=2, 3,\dots$, fail in any differential algebra. Indeed,  $H^2=H$ implies $2H\delta=\delta$ while 
$H^3=H$ implies $3H\delta=\delta$ thus $2=3$, a contradiction.  For a discussion we refer to 
(Grosser, Farkas, Kunzinger \& Steinbauer~\cite{mGrosser
at al 2}, Example (1.1.1)).
\end{enumerate}
\end{examples}
\section{Distributional Non-Standard Model}\label{S: Distributional Non-Standard Model}

	  \quad The {\em distributional non-standard model}
presented in this section is especially designed for the purpose of the {\em non-linear theory of generalized
functions} ({\em Colombeau theory}). It is a
$\mathfrak{c}^+$-saturated ultrapower non-standard model  with the set of individuals $\mathbb{R}$ based
on the $\mathcal{D}_0$-nets (Definition~\ref{D: Index Set and Nets}). 	Here
$\mathfrak{c}=\card(\mathbb{R})$ and $\mathfrak{c}^+$ stands for the successor of
$\mathfrak{c}$. The connection of the theory of asymptotic numbers and functions (Section~\ref{S:
Asymptotic Numbers and Asymptotic Functions}) with non-standard analysis will be discussed in the next section. We should mention that a similar ultrapower non-standard model (with the same index set and
different ultrafilter) was used in Guy Berger's thesis~\cite{gBerger05} for studying delta-like solutions of Hopf's equation.

 For readers who are familiar with non-standard analysis this section
is a short review of the ultra-power approach to non-standard analysis introduced by  W. A. J.
Luxemburg~\cite{wLuxNotes} almost 40 years ago (see also Stroyan \& Luxemburg~\cite{StroLux76}). For
the reader without background in non-standard analysis, this section offers a short introduction to
the subject. For additional reading, we refer to Davis~\cite{mDavis}, Lindstr\o
m~\cite{tLin} and Chapter 2 in Capi\'{n}ski \& Cutland~\cite{CapinskiCutland95}. 

\begin{definition}[Distributional Non-Standard Model]\label{D: Distributional Non-Standard Model} 
\begin{enumerate}
\item\label{No: Superstructure} Let $S$ be an infinite set.  The  \textbf{superstructure on}
$S$  is defined by $V(S)=:\bigcup_{n=0}^\infty V_n(S)$, where $V_0(S)=S$ and 
$V_{n+1}(S)=V_n(S)\cup\mathcal{P}\left(V_n(S)\right)$.  The
\textbf{level} $\lambda(A)$ of $A\in V(S)$ is defined by the formula
$\lambda(A)=: \min\{n\in\mathbb{N}_0 : A\in V_n(S)\}$. The superstructure $V(S)$ 
is {\em transitive} in the sense that $V(S)\setminus S\subset\mathcal{P}(V(S))$. Thus $V(S)\setminus S$ is
a Boolean algebra. The members $s$ of $S$ are called {\em individuals} of the superstructure $V(S)$.
\item Let $S=\mathbb{R}$. We observe that $V(\mathbb{R})$ contains all objects in \textbf{standard
analysis}: all ordered pairs of real numbers thus the set of
complex numbers $\mathbb{C}$, Cartesian products of subsets of
$\mathbb{R}$ and of $\mathbb{C}$ thus all relations on $\mathbb{R}$ and on $\mathbb{C}$, all binary
algebraic operations on $\mathbb{R}$ and on $\mathbb{C}$, all real and complex functions, all
sets of functions, etc.

\item\label{No: *R} Let $\mathbb{R}^{\mathcal{D}_0}$ be the set of all $\mathcal{D}_0$-nets in
$\mathbb{R}$ (Definition~\ref{D: Index Set and Nets}). The set $^*\mathbb{R}$ of non-standard real
numbers is defined as follows:
\begin{description}
\item[(a)]
 We define the \textbf{equivalence relation}  $\sim_\mathcal{U}$ on $\mathbb{R}^{\mathcal{D}_0}$ 
by $(A_\varphi)\sim_\mathcal{U}(B_\varphi)$ if $A_\varphi=B_\varphi$ a.e. or, equivalently, if 
$\{\varphi\in\mathcal{D}_0 :  A_\varphi=B_\varphi\}\in\mathcal{U}$ (Definition~\ref{D: Almost Everywhere}).

\item[(b)] The equivalence classes in $^*\mathbb{R}=\mathbb{R}^{\mathcal{D}_0}/\sim_\mathcal{U}$ are
called \textbf{non-standard real numbers}. We denote by
$\left<A_\varphi\right>\in{^*\mathbb{R}}$ the equivalence class of  the net $(A_\varphi)\in
\mathbb{R}^{\mathcal{D}_0}$. The ring operations in $^*\mathbb{R}$ are inherited from the ring
$\mathbb{R}^{\mathcal{D}_0}$. The order in $^*\mathbb{R}$ is defined by $\left<A_\varphi\right>>0$ if 
$A_\varphi>0$ a.e., i.e. if $\{\varphi\in\mathcal{D}_0 : A_\varphi>0\}\in\mathcal{U}$.

\item[(c)] We define the \textbf{canonical embedding} $\mathbb{R} \emb{^*\mathbb{R}}$ by the constant
nets, i.e. by $A\to\left< A_\varphi\right>$, where $A_\varphi=A$ for all
$\varphi\in {\mathcal{D}_0}$. We shall write simply 
$\mathbb{R}\subseteq{^*\mathbb{R}}$ instead of $\mathbb{R}\emb{^*\mathbb{R}}$. Also if $(A_\varphi)$
is a constant net, we shall write simply  
$\left<A\right>$ instead of $\left< A_\varphi\right>$.
\end{description}
\item Let $S={^*\mathbb{R}}$. The superstructure $V(^*\mathbb{R})$ contains all objects in
\textbf{non-standard analysis}: ordered pairs of non-standard real numbers thus the set of  non-standard
complex numbers $^*\mathbb{C}$, all Cartesian products of subsets of
$^*\mathbb{R}$ and of $^*\mathbb{C}$ thus all relations on $^*\mathbb{R}$ and on $^*\mathbb{C}$,
all binary algebraic operations on $^*\mathbb{R}$ and on $^*\mathbb{C}$, all non-standard functions,
all sets of non-standard functions, etc.
\item Let $V(\mathbb{R})^{\mathcal{D}_0}$
stand for the set of all $\mathcal{D}_0$-nets in $V(\mathbb{R})$ (Definition~\ref{D: Index Set
and Nets}). A net
$(A_\varphi)$ in
$V(\mathbb{R})^{\mathcal{D}_0}$ is called \textbf{tame} if 
$(\exists n\in\mathbb{N}_0)(\forall\varphi\in\mathcal{D}_0)(A_\varphi\in V_n(\mathbb{R}))$.
If $(A_\varphi)$ is a tame net in $V(\mathbb{R})^{\mathcal{D}_0}$ its level $\lambda((A_\varphi))$ is
defined (uniquely) as the number $n\in\mathbb{N}_0$ such that
$\{\varphi\in\mathcal{D}_0: \lambda(A_\varphi)=n\}\in\mathcal{U}$, where 
$\lambda(A_\varphi)$ is the level of $A_\varphi$ in $V(\mathbb{R})$ (see \#\ref{No: Superstructure} above). 

\item For every tame net $(A_\varphi)$ in
$V(\mathbb{R})^{\mathcal{D}_0}$ we define $\left<A_\varphi\right>\in V(^*\mathbb{R})$ inductively on
the level of the nets: If $\lambda((A_\varphi))=0$,  then  $\left<A_\varphi\right>$ is defined
in \#\ref{No: *R} above.  Suppose $\left<A_\varphi\right>$ is already defined for all tame nets $(A_\varphi)$ in
$V(\mathbb{R})^{\mathcal{D}_0}$ with $\lambda((A_\varphi))<n$. If
$(B_\varphi)\in V(\mathbb{R})^{\mathcal{D}_0}$ is a tame net with
$\lambda((B_\varphi))=n$, we let $\left<B_\varphi\right>=: \left\{ (A_\varphi)\in
V(\mathbb{R})^{\mathcal{D}_0} : \, \lambda((A_\varphi))<n\, \&\, A_\varphi\in B_\varphi
\mbox{ a.e.}\right\}$, where, as before,
$A_\varphi\in B_\varphi \mbox{ a.e.}$ means $\left\{\varphi\in\mathcal{D}_0 :
A_\varphi\in B_\varphi\right\}\in\mathcal{U}$ (Definition~\ref{D: Almost Everywhere}).
Let $(A_\varphi)$ be a \textbf{constant net} in $V(\mathbb{R})^{\mathcal{D}_0}$, i.e. 
$A_\varphi=A$ for all $\varphi\in\mathcal{D}_0$ and some $A\in V(\mathbb{R})$. In the case of 
constant nets we shall write simply $\left<A\right>$ instead of $\left<A_\varphi\right>$.

\item\label{No: Internal} An element $\mathcal{A}$ of $V(^*\mathbb{R})$ is called
\textbf{internal} if  $\mathcal{A}=\left<A_\varphi\right>$ for some tame net  $(A_\varphi)\in
V(\mathbb{R})^{\mathcal{D}_0}$.  We denote by $^*V(\mathbb{R})$ the
\textbf{set of the internal elements} of $V(^*\mathbb{R})$ (including the non-standard reals in
$^*\mathbb{R}$). The elements of   $^*V(\mathbb{R})\setminus{^*\mathbb{R}}$ are called
\textbf{internal sets}. The internal sets of the form 
$\left<A\right>$, where  $A\in V(\mathbb{R})$ (i.e. generated by constant nets), are 
called \textbf{internal standard} (or simply, {\em standard}). The elements of
$V(^*\mathbb{R})\setminus{^*V(\mathbb{R})}$ are called \textbf{external sets}.

\item\label{No: Extension Mapping} We define the \textbf{extension mapping}   $*: V(\mathbb{R})\to
V(^*\mathbb{R})$ by $^*A=\left<A\right>$. Notice that the range $\ran(*)$ of the extension mapping $*$
consists exactly of the internal standard elements of $V(^*\mathbb{R})$. The terminology {\em extension
mapping} for $*$ is due to the following result: Let $S\in V(\mathbb{R})\setminus\mathbb{R}$. Then
$S\subseteq{^*S}$ and the equality occurs \ifff $S$ is a finite set.

\item It can be shown that $\mathcal{A}$ is internal \ifff  $\mathcal{A}\in{^*A}$ for some $A\in
V(\mathbb{R})$. It can be shown as well that an element
$A\in V(\mathbb{R})$ is internal \ifff $A\in{\mathbb{R}}$ or $A$ is a finite set (notice that
$V(\mathbb{R})\subseteq V({^*\mathbb{R}})$ since $\mathbb{R}\subseteq{^*\mathbb{R}}$).  The infinite
sets in $V(\mathbb{R})\setminus\mathbb{R}$ are called \textbf{external standard sets}. For example, the
familiar $\mathbb{N}, \mathbb{N}_0, \mathbb{Z}, \mathbb{Q}, \mathbb{R}, \mathbb{C}$ are
all external standard sets.
\item\label{No: Infinitesimals in *C}   A point $\zeta\in{^*\mathbb{C}^d}$ is called \textbf{infinitesimal} if
$||\zeta||<1/n$ for all
$n\in\mathbb{N}$.  Also, $\zeta\in{^*\mathbb{C}^d}$ is called \textbf{finite} if $||\zeta||<n$ for some
$n\in\mathbb{N}$. Similarly,  $\zeta\in{^*\mathbb{C}^d}$ is called \textbf{infinitely large} if $n<||\zeta||$ for
all $n\in\mathbb{N}$. We denote by
$\mathcal{I}(^*\mathbb{C}^d), \mathcal{F}(^*\mathbb{C}^d)$ and $\mathcal{L}(^*\mathbb{C}^d)$ 
the sets of the \textbf{infinitesimal, finite and infinitely large points} in
$^*\mathbb{C}^d$, respectively. We often write $\zeta\approx 0$ instead of
$\zeta\in\mathcal{I}(^*\mathbb{C}^d)$ and $\zeta_1\approx \zeta_2$ instead of
$\zeta_1-\zeta_2\in\mathcal{I}(^*\mathbb{C}^d)$. More generally, if
$\mathcal{S}\subseteq{^*\mathbb{C}^d}$, then $\mathcal{I}(\mathcal{S}),
\mathcal{F}(\mathcal{S})$ and $\mathcal{L}(\mathcal{S})$ denote the sets of infinitesimal, finite and
infinitely large points in $\mathcal{S}$, respectively.
\item We define the \textbf{standard part mapping}  $\st:
\mathcal{F}(^*\mathbb{C}^d)\to
\mathbb{C}^d$ by the formula $\st(\zeta)\approx \zeta$. We observe that $\st$ is a vector
homomorphism from 
$\mathcal{F}(^*\mathbb{C}^d)$ onto
$\mathbb{C}^d$. In particular, $\st: \mathcal{F}(^*\mathbb{C})\to\mathbb{C}$ is an order preserving ring
homomorphism from 
$\mathcal{F}(^*\mathbb{C})$ onto $\mathbb{C}$ (relative to the partial order in $^*\mathbb{C}$).

\item\label{No: Canonical Infinitesimal} We call $\rho\in{^*\mathbb{R}}$, defined by 
$\rho=\left<R_\varphi\right>$ (cf. (\ref{E: RadiusSupport})), the \textbf{canonical
infinitesimal} in $^*\mathbb{R}$. It is {\em canonical} because is defined uniquely in terms of the index set
of the distributional non-standard model. It is a {\em positive infinitesimal} because
$0<\rho<1/n$ for all
$n\in\mathbb{N}$ (Example~\ref{Ex: Radius of Support}). 

\item\label{No: Monad} Let  $x\in{\mathbb{R}^d}$ and $X\subseteq{\mathbb{R}^d}$. The \textbf{monads}
of  $x$ and $X$ are defined by 
\begin{align}
&\mu(x)=\left\{x+dx : dx\in{^*\mathbb{R}^d}\; \&\, ||dx||\approx 0
\right\},\notag\\
&\mu(X)=\left\{x+dx : x\in X\; \&\; dx\in{^*\mathbb{R}^d}\; \&\; ||dx||\approx 0
\right\},\notag
\end{align}
respectively. Also, $\mu_0(x)=: \mu(x)\setminus\{x\}$ is the \textbf{deleted monad} of $x$.
\end{enumerate}
\end{definition}

\begin{theorem}[Extension Principle]\label{T: Extension Principle} $^*\mathbb{R}$ is a proper
extension of $\mathbb{R}$, i.e. $\mathbb{R}\subsetneqq{^*\mathbb{R}}$.
Consequently, $V(\mathbb{R})\subsetneqq V({^*\mathbb{R}})$.
\end{theorem}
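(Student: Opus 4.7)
The plan is to exhibit a concrete witness in ${^*\mathbb{R}}$ that cannot lie in the image of the canonical embedding $\mathbb{R}\hookrightarrow {^*\mathbb{R}}$, and then upgrade this to a proper inclusion of superstructures by monotonicity of the $V$-construction.

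First I would check that the canonical embedding $A\mapsto \left<A\right>$ is well-defined and injective. Well-definedness is immediate from Definition~\ref{D: Distributional Non-Standard Model}(\ref{No: *R}); injectivity follows because for distinct $A,B\in\mathbb{R}$ the set $\{\varphi\in\mathcal{D}_0:A=B\}=\varnothing$ is not in $\mathcal{U}$ by property~\#\ref{No: F3} of Lemma~\ref{L: List of Properties of U}. So we may legitimately regard $\mathbb{R}$ as a subset of ${^*\mathbb{R}}$.

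Next, the natural witness to properness is the canonical infinitesimal $\rho=\left<R_\varphi\right>$ from Definition~\ref{D: Distributional Non-Standard Model}(\ref{No: Canonical Infinitesimal}). By Example~\ref{Ex: Radius of Support}, the net $(R_\varphi)$ is real-valued a.e.\ (so $\rho\in{^*\mathbb{R}}$), strictly positive a.e.\ (so $\rho>0$ under the order on ${^*\mathbb{R}}$), and bounded above by $1/n$ a.e.\ for every $n\in\mathbb{N}$ (so $\rho<1/n$ in ${^*\mathbb{R}}$). For any candidate $r\in\mathbb{R}$ with $r=\rho$ under the identification, we would have $0<r<1/n$ for all $n\in\mathbb{N}$, forcing $r=0$ by the archimedean property of $\mathbb{R}$; but $r=0$ contradicts $r=\rho>0$. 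Hence $\rho\in{^*\mathbb{R}}\setminus\mathbb{R}$, giving $\mathbb{R}\subsetneqq{^*\mathbb{R}}$.

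For the superstructure statement, I would observe that the superstructure functor is monotone: $\mathbb{R}\subseteq{^*\mathbb{R}}$ implies $V_n(\mathbb{R})\subseteq V_n({^*\mathbb{R}})$ for every $n$ by an easy induction, and consequently $V(\mathbb{R})\subseteq V({^*\mathbb{R}})$. For strict inclusion, take the singleton $\{\rho\}$. Since $\rho\in V_0({^*\mathbb{R}})$, we have $\{\rho\}\in\mathcal{P}(V_0({^*\mathbb{R}}))\subseteq V_1({^*\mathbb{R}})$. On the other hand, $\{\rho\}\notin V_1(\mathbb{R})=\mathbb{R}\cup\mathcal{P}(\mathbb{R})$: it is not an element of $\mathbb{R}$ (it is a set), and it is not a subset of $\mathbb{R}$ since $\rho\notin\mathbb{R}$; an easy induction on the level shows more generally that $\{\rho\}\notin V_n(\mathbb{R})$ for any $n$, because every element of $V(\mathbb{R})\setminus\mathbb{R}$ has all its hereditary members in $V(\mathbb{R})$, whereas $\rho\notin V(\mathbb{R})$. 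So $\{\rho\}\in V({^*\mathbb{R}})\setminus V(\mathbb{R})$, completing the proof.

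There is no real obstacle here; the only subtlety is keeping straight the identification of $\mathbb{R}$ with its image under the constant-net embedding, and making sure the archimedean argument is applied after (not before) we know $\rho\neq 0$ in ${^*\mathbb{R}}$ — the nontriviality of $\rho$ ultimately rests on the freeness of $\mathcal{U}$ (property~\#\ref{No: Free}/\#\ref{No: F3} of Lemma~\ref{L: List of Properties of U}), which is what prevents $(R_\varphi)$ from being equivalent to the zero net.
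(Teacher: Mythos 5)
Your proposal is correct and follows the same route as the paper, whose entire proof is the observation that the canonical infinitesimal $\rho=\left<R_\varphi\right>$ lies in ${^*\mathbb{R}}\setminus\mathbb{R}$ (since $0<\rho<1/n$ for all $n\in\mathbb{N}$ by Example~\ref{Ex: Radius of Support}, no real number can equal it). You simply spell out the details the paper leaves implicit — injectivity of the constant-net embedding, the archimedean argument, and the strictness $V(\mathbb{R})\subsetneqq V({^*\mathbb{R}})$ via the witness $\{\rho\}$ — all of which are fine.
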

\begin{proof}  We observe that $\rho\in{^*\mathbb{R}}\setminus\mathbb{R}$ (\#\ref{No: Canonical
Infinitesimal} in Definition~\ref{D: Distributional Non-Standard Model}).
\end{proof} 

	  In what follows we assume a particular case of the continuum hypothesis in the form 
$\mathfrak{c}^+=2^\mathfrak{c}$.

\begin{theorem}[Saturation Principle]\label{T: Saturation Principle} Our non-standard model
$V(^*\mathbb{R})$ is $\mathfrak{c}^+$-\textbf{saturated} in the sense that every family
$(\mathcal{A}_{\gamma})_{\gamma\in\Gamma}$ of internal sets in $V(^*\mathbb{R})$
with the finite intersection property  and $\card({\Gamma})\leq \mathfrak{c}$ has the non-empty intersection
 $\bigcap_{\gamma\in\Gamma}\mathcal{A}_{\gamma}\not=\varnothing$.
Also  $V(^*\mathbb{R})$ is \textbf{fully saturated} in the sense that $V(^*\mathbb{R})$ is
$\card(^*\mathbb{R})$-saturated (cf. Chang {\em \&}
Keisler~\cite{CKeis}, Chpter 5).
\end{theorem}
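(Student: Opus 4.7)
The plan is to follow the classical theorem of Keisler that ultrapowers modulo a $\kappa^+$-good ultrafilter are $\kappa^+$-saturated, applied here with $\kappa=\mathfrak{c}$ and the $\mathfrak{c}^+$-good ultrafilter $\mathcal{U}$ from Theorem~\ref{T: Existence of Ultrafilter}. Concretely, let $(\mathcal{A}_\gamma)_{\gamma\in\Gamma}$ be a family of internal sets with the finite intersection property and $\card(\Gamma)\leq\mathfrak{c}$. Write $\mathcal{A}_\gamma=\langle A_{\gamma,\varphi}\rangle$ for tame nets $(A_{\gamma,\varphi})\in V(\mathbb{R})^{\mathcal{D}_0}$ (Definition~\ref{D: Distributional Non-Standard Model}, \#\ref{No: Internal}); enlarging $\Gamma$ by at most a factor of $\mathfrak{c}$ we may assume the family is closed under finite intersections, so that the FIP translates into $\bigcap_{\gamma\in X}A_{\gamma,\varphi}\neq\varnothing$ a.e.\ for every finite $X\subseteq\Gamma$.

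The crucial step is to convert FIP into the existence of a single net $(a_\varphi)$ with $a_\varphi\in A_{\gamma,\varphi}$ a.e.\ for every $\gamma$. For each $X\in\mathcal{P}_\omega(\Gamma)$ set
\[
R(X)=\Big\{\varphi\in\mathcal{D}_0:\bigcap_{\gamma\in X}A_{\gamma,\varphi}\neq\varnothing\Big\}.
\]
Then $R(X)\in\mathcal{U}$, and $X\subseteq Y$ implies $R(X)\supseteq R(Y)$, so $R$ is a reversal. Applying the $\mathfrak{c}^+$-goodness of $\mathcal{U}$ (Lemma~\ref{L: List of Properties of U}, \#\ref{No: Good}) to $R$ yields a strict reversal $S:\mathcal{P}_\omega(\Gamma)\to\mathcal{U}$ with $S(X)\subseteq R(X)$.

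Now for each $\varphi\in\mathcal{D}_0$ let $X_\varphi=\{\gamma\in\Gamma:\varphi\in S(\{\gamma\})\}$; by strictness, for every \emph{finite} $Y\subseteq X_\varphi$ one has $\varphi\in S(Y)\subseteq R(Y)$, so $\bigcap_{\gamma\in Y}A_{\gamma,\varphi}\neq\varnothing$. Pick $a_\varphi$ in $\bigcap_{\gamma\in X_\varphi}A_{\gamma,\varphi}$ when $X_\varphi$ is finite non-empty, and arbitrarily otherwise. The main obstacle is the case when $X_\varphi$ is infinite: the standard trick (as in Chang \& Keisler~\cite{CKeis}; see also the appendix of Lindstr\o m~\cite{tLin}) is to refine the strict reversal using the closure under finite intersections of the family so that $X_\varphi$ is finite for a.e.\ $\varphi$. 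Once this is accomplished, for each fixed $\gamma$ the set $\{\varphi:a_\varphi\in A_{\gamma,\varphi}\}$ contains the intersection of $S(\{\gamma\})$ with the a.e.\ set on which $X_\varphi$ is finite, and therefore lies in $\mathcal{U}$. Consequently $\langle a_\varphi\rangle\in\mathcal{A}_\gamma$ for every $\gamma$, proving $\mathfrak{c}^+$-saturation.

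For the full saturation claim I would bound $\card(^*\mathbb{R})$ both ways. Since $\card(\mathcal{D}_0)=\mathfrak{c}$, we get $\card(^*\mathbb{R})\leq\card(\mathbb{R}^{\mathcal{D}_0})=\mathfrak{c}^\mathfrak{c}=2^\mathfrak{c}=\mathfrak{c}^+$, invoking the continuum hypothesis $\mathfrak{c}^+=2^\mathfrak{c}$. Conversely, the $\mathfrak{c}^+$-saturation just proved forces $\card(^*\mathbb{R})\geq\mathfrak{c}^+$: otherwise the family of internal sets $\{{^*\mathbb{R}}\setminus\{a\}:a\in{^*\mathbb{R}}\}$ would have cardinality $\leq\mathfrak{c}$ and the finite intersection property but empty intersection, contradicting saturation. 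Hence $\card(^*\mathbb{R})=\mathfrak{c}^+$, and $\mathfrak{c}^+$-saturation coincides with $\card(^*\mathbb{R})$-saturation, i.e.\ with full saturation.
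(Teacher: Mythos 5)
Your route is in substance the one the paper takes: the paper simply cites Keisler's theorem (Chang \& Keisler) that an ultrapower modulo a countably incomplete $\mathfrak{c}^+$-good ultrafilter is $\mathfrak{c}^+$-saturated, and you sketch that proof. The sketch is sound up to the point where you must guarantee that $X_\varphi=\{\gamma\in\Gamma:\varphi\in S(\{\gamma\})\}$ is finite, and there your proposed fix is not the right mechanism: closing the family under finite intersections does nothing to bound $X_\varphi$, and no refinement of $S$ ``using the closure under finite intersections'' will produce finiteness. What is actually needed, and what your write-up never invokes, is the \emph{countable incompleteness} of $\mathcal{U}$, witnessed in this model precisely by the directing sets: $\mathcal{D}_n\in\mathcal{U}$ and $\bigcap_{n}\mathcal{D}_n=\varnothing$ (Theorem~\ref{T: Directing Sequence of D(Rd)}, part (iii)). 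The standard repair is to build this decay into the reversal \emph{before} applying goodness: put $R(X)=\mathcal{D}_{|X|}\cap\bigl\{\varphi\in\mathcal{D}_0:\bigcap_{\gamma\in X}A_{\gamma,\varphi}\neq\varnothing\bigr\}$, still a reversal into $\mathcal{U}$, and take a strict reversal $S\leq R$ (Lemma~\ref{L: List of Properties of U}, \#\ref{No: Good}). Then any $n$ distinct elements $\gamma_1,\dots,\gamma_n\in X_\varphi$ force $\varphi\in S(\{\gamma_1,\dots,\gamma_n\})\subseteq R(\{\gamma_1,\dots,\gamma_n\})\subseteq\mathcal{D}_n$, so $X_\varphi$ is finite for every $\varphi$; strictness gives $\varphi\in S(X_\varphi)\subseteq R(X_\varphi)$, so $a_\varphi$ can indeed be chosen in $\bigcap_{\gamma\in X_\varphi}A_{\gamma,\varphi}$, and the rest of your argument goes through. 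Without countable incompleteness the statement you are sketching is simply false (goodness alone does not give saturation), so this is a genuine gap rather than a bookkeeping omission. A minor additional point: the goodness property in Lemma~\ref{L: List of Properties of U} is stated for $\Gamma\subseteq\mathcal{D}_0$, so you should first transport your abstract index set into $\mathcal{D}_0$ using $\card(\Gamma)\leq\mathfrak{c}=\card(\mathcal{D}_0)$.

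The second half of your proposal is correct and matches the paper: $\card(^*\mathbb{R})\leq\card(\mathbb{R}^{\mathcal{D}_0})=2^{\mathfrak{c}}$ from the construction, $\card(^*\mathbb{R})\geq\mathfrak{c}^+$ from $\mathfrak{c}^+$-saturation applied to the internal family $\{{^*\mathbb{R}}\setminus\{a\}:a\in{^*\mathbb{R}}\}$, and the assumption $\mathfrak{c}^+=2^{\mathfrak{c}}$ to conclude $\card(^*\mathbb{R})=\mathfrak{c}^+$, hence full saturation; your lower-bound argument is in fact spelled out more explicitly than in the paper's proof.
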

\begin{proof}  We refer the reader to the original proof in Chang \& Keisler~\cite{CKeis}
(for a presentation see also  Lindstr\o m~\cite{tLin}). We should mention that the
property of the ultrafilter  $\mathcal{U}$ to be $\mathfrak{c}^+$-good (\# \ref{No: Good} in Lemma~\ref{L:
List of Properties of U}) is involved in the proof of this theorem. To show that $V(^*\mathbb{R})$ is fully
saturated, we have to show that
$\card(^*\mathbb{R})=\mathfrak{c}^+$.  Indeed, 
$\card{(^*\mathbb{R})}\leq\card(\mathbb{R}^{\mathcal{D}_0})=2^\mathfrak{c}$ follows from the definition
of $^*\mathbb{R}$ in the distributional model and $\card(^*\mathbb{R})\geq2^\mathfrak{c}$ follows from the
fact that $V(^*\mathbb{R})$ is $\mathfrak{c}^+$-\textbf{saturated}.
\end{proof} 
\begin{theorem}[Order Completeness Principle]\label{T: Order Completeness Principle} Let
$\mathcal{A}$ be an internal non-empty bounded from above subset of $^*\mathbb{R}$. Then
$\sup(\mathcal{A})$ exists in $^*\mathbb{R}$. Also, if $\sup(\mathcal{A})$
exists, then there exists a net $(\mathcal{A}_\varphi)\in\mathcal{P}(\mathbb{R})^{\mathcal{D}_0}$
(Examples~\ref{Exs: Nets and Distributions}) such that
$\mathcal{A}=\left<\mathcal{A}_\varphi\right>$ and
$\sup(\mathcal{A})=\left<\sup(\mathcal{A}_\varphi)\right>$.
\end{theorem}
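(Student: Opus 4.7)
The plan is to prove both assertions simultaneously by constructing an explicit representing net whose pointwise suprema give $\sup(\mathcal{A})$. This makes the ``Also'' clause essentially automatic once the existence is settled, so the real work is arranging a representative $(\mathcal{A}_\varphi)$ for which $\sup \mathcal{A}_\varphi$ is defined almost everywhere.

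First I would unpack what ``internal, nonempty, bounded above'' means in ultrapower terms. Write $\mathcal{A}=\langle\mathcal{A}_\varphi\rangle$ for some tame net $(\mathcal{A}_\varphi)\in\mathcal{P}(\mathbb{R})^{\mathcal{D}_0}$, pick $a=\langle a_\varphi\rangle\in\mathcal{A}$ and an upper bound $b=\langle b_\varphi\rangle\in{^*\mathbb{R}}$. The nonemptiness of $\mathcal{A}$ forces $a_\varphi\in\mathcal{A}_\varphi$ a.e.\ (by definition of the internal membership relation in \#6 of Definition~\ref{D: Distributional Non-Standard Model}). For the boundedness, let $\Phi=\{\varphi : \mathcal{A}_\varphi\subseteq(-\infty,b_\varphi]\}$; if $\Phi\notin\mathcal{U}$ then on a set in $\mathcal{U}$ one can select by the axiom of choice a witness $x_\varphi\in\mathcal{A}_\varphi$ with $x_\varphi>b_\varphi$, producing $x=\langle x_\varphi\rangle\in\mathcal{A}$ with $x>b$, contradicting the upper bound property. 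Thus $\Phi\in\mathcal{U}$; replacing $\mathcal{A}_\varphi$ by a suitable modification on $\mathcal{D}_0\setminus\Phi$ (which does not alter $\langle\mathcal{A}_\varphi\rangle$), we may assume every $\mathcal{A}_\varphi$ is a nonempty real set bounded above by $b_\varphi$, so that the classical supremum $s_\varphi=\sup\mathcal{A}_\varphi\in\mathbb{R}$ is defined for every $\varphi\in\mathcal{D}_0$, and $s_\varphi\leq b_\varphi$, so $(s_\varphi)$ is tame. Set $s=\langle s_\varphi\rangle\in{^*\mathbb{R}}$.

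Next I would verify that $s$ is an upper bound of $\mathcal{A}$ in $^*\mathbb{R}$. If $x=\langle x_\varphi\rangle\in\mathcal{A}$ then $x_\varphi\in\mathcal{A}_\varphi$ a.e., and from the classical inequality $x_\varphi\leq s_\varphi$ on that set we conclude $x\leq s$ by the order definition in \#3 of Definition~\ref{D: Distributional Non-Standard Model}.

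The least-upper-bound property is the step where one has to be careful. Suppose $s'=\langle s'_\varphi\rangle<s$ in ${^*\mathbb{R}}$, i.e.\ $\Psi=\{\varphi:s'_\varphi<s_\varphi\}\in\mathcal{U}$. For each $\varphi\in\Psi$, by the definition of the real supremum, the set $\{y\in\mathcal{A}_\varphi : y>s'_\varphi\}$ is nonempty; pick $x_\varphi$ there using the axiom of choice, and for $\varphi\notin\Psi$ take $x_\varphi$ to be any element of $\mathcal{A}_\varphi$. The net $(x_\varphi)$ is bounded by $(b_\varphi)$ hence tame, $x=\langle x_\varphi\rangle\in\mathcal{A}$, and $x_\varphi>s'_\varphi$ on $\Psi$, giving $x>s'$. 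Hence no element strictly below $s$ can be an upper bound, so $s=\sup(\mathcal{A})$ in ${^*\mathbb{R}}$. The ``Also'' clause follows with the representative $(\mathcal{A}_\varphi)$ constructed above, since $\sup\mathcal{A}_\varphi=s_\varphi$ by construction and $\sup(\mathcal{A})=\langle s_\varphi\rangle=\langle\sup\mathcal{A}_\varphi\rangle$.

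The main obstacle is the modification of the representing net in the first step: one must show that nonemptiness and boundedness can be transferred from the internal set $\mathcal{A}$ down to almost-every coordinate set $\mathcal{A}_\varphi$, which in turn requires a choice argument combined with the ultrafilter dichotomy. Everything else is straightforward bookkeeping with equivalence classes modulo $\mathcal{U}$, plus two applications of the axiom of choice to extract nets from pointwise-existence statements.
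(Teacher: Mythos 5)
Your proof is correct. There is no in-paper argument to compare it with: the paper's ``proof'' of this theorem is a single line referring to Lindstr\o m's lecture notes (p.~11), and what you have written is exactly the standard ultrapower argument that such a reference supplies --- normalize the representing net so that every $\mathcal{A}_\varphi$ is a nonempty subset of $\mathbb{R}$ bounded above by $b_\varphi$ (using the ultrafilter dichotomy plus the axiom of choice), take the coordinatewise suprema $s_\varphi=\sup\mathcal{A}_\varphi$, and verify that $\left<s_\varphi\right>$ is an upper bound and that nothing strictly smaller is, again by a choice-of-witnesses argument; the ``Also'' clause then comes for free from the constructed representative. Two small bookkeeping remarks. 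First, your opening step, writing $\mathcal{A}=\left<\mathcal{A}_\varphi\right>$ with $(\mathcal{A}_\varphi)\in\mathcal{P}(\mathbb{R})^{\mathcal{D}_0}$, deserves one line of justification, since internality only gives a tame net in $V(\mathbb{R})^{\mathcal{D}_0}$: if $\mathcal{A}_\varphi$ contained non-real elements for a set of $\varphi$'s in $\mathcal{U}$, a choice of such elements would produce a member of $\mathcal{A}$ lying outside ${^*\mathbb{R}}$, contradicting $\mathcal{A}\subseteq{^*\mathbb{R}}$; so one may replace $\mathcal{A}_\varphi$ by $\mathcal{A}_\varphi\cap\mathbb{R}$ almost everywhere --- the same dichotomy-plus-choice device you already use for boundedness, and this is precisely the point the ``Also'' clause of the statement is recording. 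Second, the remark ``$s_\varphi\leq b_\varphi$, so $(s_\varphi)$ is tame'' conflates two things: tameness concerns only the level in the superstructure, so any net of real numbers is automatically tame; the bound $s_\varphi\leq b_\varphi$ is needed only to guarantee that $s_\varphi$ exists in $\mathbb{R}$. Neither point affects the validity of your argument.
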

\begin{proof} For the proof we refer to (Lindstr\o m~\cite{tLin}, p. 11).
\end{proof}
\begin{theorem}[Spilling Principles]\label{T: Spilling Principles} Let
$\mathcal{A}\subseteq{^*\mathbb{R}}$ be an internal set. Then:
\begin{description}

\item{\bf (i) Overflow of $\mathcal{F}(^*\mathbb{R})$:} If $\mathcal{A}$ contains arbitrarily
large finite numbers, then $\mathcal{A}$ contains arbitrarily small infinitely large numbers.
\item{\bf (ii) Underflow of $\mathcal{F}(^*\mathbb{R})\setminus\mathcal{I}(^*\mathbb{R})$:} 
If $\mathcal{A}$ contains arbitrarily small finite non-infinitesimals, then $\mathcal{A}$
contains arbitrarily large infinitesimals. 
\item{\bf (iii) Overflow of $\mathcal{I}(^*\mathbb{R})$:} If $\mathcal{A}$ contains arbitrarily
large infinitesimals, then $\mathcal{A}$ contains arbitrarily small finite non-infinitesimals.
\item{\bf (iv) Underflow of $\mathcal{L}(^*\mathbb{R})$:} If $\mathcal{A}$
contains arbitrarily small infinitely large numbers, then
$\mathcal{A}$ contains arbitrarily large finite numbers. 
\end{description}
\end{theorem}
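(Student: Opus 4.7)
The plan is to treat the four spilling principles with two tools established earlier in the excerpt: the $\mathfrak{c}^+$-saturation principle (Theorem~\ref{T: Saturation Principle}) for parts (i) and (ii), and the order completeness principle (Theorem~\ref{T: Order Completeness Principle}) for parts (iii) and (iv). In each case I fix the ``target'' object on the opposite side of the boundary (an infinitely large $N_0$ in (i), a positive infinitesimal $\delta_0$ in (ii), a standard $\varepsilon>0$ in (iii), a standard $n\in\mathbb{N}$ in (iv)) and reduce the conclusion to the non-emptiness of a suitable internal set built from $\mathcal{A}$.

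For (i), fix infinitely large $N_0>0$ and consider the nested countable family of internal sets $\{\mathcal{A}\cap(n,N_0):n\in\mathbb{N}\}$; each is non-empty by hypothesis, since any finite $a\in\mathcal{A}$ with $a>n$ automatically satisfies $a<N_0$. The family has $\aleph_0\leq\mathfrak{c}$ members and the finite intersection property, so saturation produces an $a$ common to all members; such an $a$ exceeds every standard $n$, hence is infinitely large, while $a<N_0$. For (ii), fix a positive infinitesimal $\delta_0$ and form $\{\mathcal{A}\cap(\delta_0,1/n):n\in\mathbb{N}\}$: each member is non-empty because any finite non-infinitesimal from $\mathcal{A}$ bounded by the standard $1/n$ automatically exceeds the infinitesimal $\delta_0$. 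Saturation supplies an $a$ with $\delta_0<a<1/n$ for every $n$, so $a$ is an infinitesimal exceeding $\delta_0$.

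For (iii), fix standard $\varepsilon>0$ and argue by contradiction: if $\mathcal{A}\cap(1/n,\varepsilon)=\varnothing$ for every $n\in\mathbb{N}$, then the internal set $\mathcal{D}:=\mathcal{A}\cap(0,\varepsilon)$ is contained in $(0,1/n]$ for each $n$. By hypothesis $\mathcal{D}$ contains positive infinitesimals, so it is non-empty and bounded above by $\varepsilon$, and order completeness produces $M=\sup\mathcal{D}$. Since $M\leq 1/n$ for every $n$, $M$ is a positive infinitesimal; but the hypothesis of arbitrarily large infinitesimals in $\mathcal{A}$ then supplies an infinitesimal $a\in\mathcal{D}$ with $a>M$, contradicting $M=\sup\mathcal{D}$. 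Part (iv) is dual: fix standard $n$ and suppose $\mathcal{A}\cap(n,m)=\varnothing$ for every standard $m$; then $\mathcal{D}:=\mathcal{A}\cap(n,\infty)$ is non-empty (it contains any infinitely large element of $\mathcal{A}$) and bounded below by $n$, so the inf-version of order completeness (obtained by applying Theorem~\ref{T: Order Completeness Principle} to $-\mathcal{D}$) supplies $M=\inf\mathcal{D}$. Because $\mathcal{D}\subseteq[m,\infty)$ for every $m\in\mathbb{N}$, $M$ is infinitely large, and the hypothesis produces an infinitely large $a\in\mathcal{D}$ with $a<M$, contradicting $M=\inf\mathcal{D}$.

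The one non-routine step is in parts (iii)--(iv): a direct saturation argument would want to index the relevant family over positive infinitesimals or positive infinitely large numbers, but those classes can have cardinality up to $\mathfrak{c}^+$ in a fully saturated model, which exceeds the index-size $\leq\mathfrak{c}$ that saturation can accommodate. Switching to order completeness sidesteps the obstruction by collapsing the whole family to the single internal supremum (resp.\ infimum) of $\mathcal{D}$, which exists precisely because $\mathcal{D}$ is internal (an intersection of two internal sets) and bounded.
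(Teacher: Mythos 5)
Your proof is correct. Note, though, that the paper does not prove Theorem~\ref{T: Spilling Principles} at all: it simply refers to Davis and to Lindstr\o m, so there is no in-text argument to match. What you supply is a self-contained derivation from two results the paper does state, and the split is a little different from the usual textbook route: the cited proofs typically run all four statements through a single device (the existence of internal suprema, equivalently the externality of $\mathbb{N}$, $\mathcal{I}(^*\mathbb{R})$, $\mathcal{L}(^*\mathbb{R})$), whereas you use countable saturation (Theorem~\ref{T: Saturation Principle}) for (i)--(ii) and order completeness (Theorem~\ref{T: Order Completeness Principle}) for (iii)--(iv); your closing remark correctly explains why a naive saturation argument indexed over all infinitesimals or all infinite numbers is not licensed by $\mathfrak{c}^+$-saturation. (An alternative that avoids order completeness entirely: deduce (iii) from (ii) and (iv) from (i) by applying the already-proved parts to the internal sets $\{x>0 : \mathcal{A}\cap(x,\varepsilon)=\varnothing\}$, resp.\ $\{x : \mathcal{A}\cap(n,x)=\varnothing\}$, or simply to $\{1/x : x\in\mathcal{A},\ x>0\}$.) Two small points you leave implicit and could state: the sets $\mathcal{A}\cap(n,N_0)$, $\mathcal{A}\cap(\delta_0,1/n)$, $-\mathcal{D}$, etc.\ are internal --- immediate either from the Internal Definition Principle (Theorem~\ref{T: Internal Definition Principle}, which in the paper is stated after the Spilling Principles but proved independently) or directly from the ultrapower representation $\mathcal{A}=\left<A_\varphi\right>$; and throughout you read ``arbitrarily small/large'' as referring to positive elements near the relevant boundary (so that, e.g., a non-infinitesimal $a<1/n$ automatically exceeds $\delta_0$ in (ii)), which is the intended reading, consistent with the Lightstone--Robinson permanence principles quoted later as Theorem~\ref{T; Principles of Permanence}.
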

\begin{proof}
	For the proof we refer to Davis~\cite{mDavis} or Lindstr\o m~\cite{tLin}.
\end{proof}
	 The next result demonstrates the remarkable feature of non-standard analysis to reduce (and
sometimes even to eliminate completely) the number of quantifiers compared with standard analysis.
\begin{theorem}[Usual Topology on $\mathbb{R}^d$ and Monads]\label{T: Usual Topology on Rd and
Monads} Let $X\subseteq{\mathbb{R}^d}$ and $x\in{\mathbb{R}^d}$. Then: {\bf (a)} $x$ is an
\textbf{interior point} of $X$ \ifff $\mu(x)\subseteq{^*X}$. Consequently, $X$ is \textbf{open} \ifff
$\mu(X)\subseteq{^*X}$. {\bf (b)} $X$ is \textbf{closed} \ifff $\st({^*X})=X$, where $\st:
\mathcal{F}(^*\mathbb{R}^d)\to\mathbb{R}^d$ stands for the standard part mapping. {\bf (c)}
$x$ is an \textbf{adherent point} of $X$ (i.e. $x\in\overline{X}$) \ifff $^*X\cap\mu(x)\not=\varnothing$.
{\bf (d)} $X$ is a \textbf{cluster point} of $X$ \ifff
$^*X\cap\mu_0(x)\not=\varnothing$. {\bf (e)} $X$ is a \textbf{bounded set} \ifff $^*X$ consists of finite
points only. {\bf (f)}  $X$ is \textbf{compact} \ifff $^*X\subseteq\mu(X)$. 
\end{theorem}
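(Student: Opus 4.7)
The plan is to derive all six monad characterizations by combining the transfer principle (inherited from the ultrapower construction of $V({^*\mathbb{R}})$) with the existence of the canonical infinitesimal $\rho$, which supplies infinite hypernaturals in $^*\mathbb{N}\setminus\mathbb{N}$ and, via $1/\rho$, arbitrarily small positive infinitesimals. Throughout, I would exploit the fact that for $X\subseteq\mathbb{R}^d$ standard, $^*X$ is internal while $\mu(x)$ and $\mathcal{F}(^*\mathbb{R}^d)$ are in general external, so quantifier manipulations must go through $^*X$ before restricting to monads.

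For part (a), the interior-point direction is immediate: if $B(x,r)\subseteq X$ then by transfer $^*B(x,r)\subseteq{^*X}$, and $\mu(x)\subseteq{^*B(x,r)}$ since every infinitesimal has norm less than $r$. For the converse I would assume $x$ is not interior, so the standard statement $(\forall n\in\mathbb{N})(\exists y\in\mathbb{R}^d\setminus X)(\|y-x\|<1/n)$ holds; transfer gives the same for every $n\in{^*\mathbb{N}}$, and applying it to $n=\lfloor 1/\rho\rfloor$ produces a $y\in\mu(x)\cap{^*(\mathbb{R}^d\setminus X)}=\mu(x)\setminus{^*X}$, contradicting the hypothesis. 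The global version ``$X$ open iff $\mu(X)\subseteq{^*X}$'' is then an immediate union. Parts (c) and (d) follow by the same template applied respectively to the density statement $(\forall\varepsilon>0)(\exists y\in X)(\|y-x\|<\varepsilon)$ and its strengthening with $y\neq x$: pick $\varepsilon$ infinitesimal, transfer, and read off a point in $^*X\cap\mu(x)$ or $^*X\cap\mu_0(x)$; the reverse direction needs only pushing the statement $\|y-x\|<\varepsilon$ back through transfer for standard $\varepsilon$.

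For part (b), the cleanest route is to reduce closedness of $X$ to openness of $\mathbb{R}^d\setminus X$ via part (a). If $X$ is closed and $y\in\mathcal{F}({^*X})$, then writing $\st(y)\notin X$ would place $\st(y)$ in the open complement, hence $\mu(\st(y))\subseteq{^*(\mathbb{R}^d\setminus X)}$, which contradicts $y\in\mu(\st(y))\cap{^*X}$; combined with the trivial inclusion $X\subseteq\st({^*X})$ this gives equality (here I read $\st({^*X})$ as $\st(\mathcal{F}({^*X}))$, the only sensible interpretation since $\st$ is only defined on finite points). Conversely, $\st({^*X})=X$ forces $\mu(x)\cap{^*X}=\varnothing$ for every $x\notin X$ (otherwise $x$ would land in $\st({^*X})$), and by (a) applied to $\mathbb{R}^d\setminus X$ the complement is open. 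Part (e) is a direct transfer of boundedness $(\exists M)(\forall y\in X)(\|y\|<M)$, using an infinite hypernatural as witness for the failure of boundedness. Part (f) then drops out of Heine--Borel: compactness equals closed plus bounded, and by (b)+(e) this translates to ``$^*X$ consists of finite points and $\st(^*X)=X$'', which is exactly $^*X\subseteq\mu(X)$.

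The main obstacle is not computational but conceptual: one must be careful that the set on which transfer is applied is standard (or at least internal) before invoking an infinite hypernatural as a witness. The cleanest way to keep this bookkeeping straight is to phrase each standard topological definition as a first-order formula over $V(\mathbb{R})$ with bound parameter the standard set $X$, transfer verbatim into $V({^*\mathbb{R}})$, and only at the end specialize the transferred quantifier to an infinite or infinitesimal instance. Once this discipline is in place, no appeal to saturation is actually required for (a)--(f); transfer together with the canonical infinitesimal $\rho$ suffices.
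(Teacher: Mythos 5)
Your argument is correct, and it is essentially self-contained, which is more than the paper offers: the paper does not prove Theorem~\ref{T: Usual Topology on Rd and Monads} at all, but simply refers the reader to Robinson~\cite{aRob66} and to Salbany \& Todorov~\cite{SalbTod98}. Your treatment of (a)--(e) is the standard transfer argument found in those sources: phrase the topological property as a bounded-quantifier formula with the standard set $X$ as parameter, transfer, and then instantiate the transferred quantifier at an infinitesimal or infinitely large element (here conveniently supplied by $\rho$ and $1/\rho$); the only point needing the care you already take is reading $\st({^*X})$ as $\st$ restricted to $\mathcal{F}({^*X})$, and noting ${^*(\mathbb{R}^d\setminus X)}={^*\mathbb{R}^d}\setminus{^*X}$. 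Where you genuinely diverge is (f): the classical nonstandard proof of the compactness criterion (valid in arbitrary topological spaces) argues directly with open covers and uses an enlargement/saturation property, whereas you exploit the Euclidean setting and reduce compactness to closed-plus-bounded via Heine--Borel, so that (f) falls out of (b) and (e) by pure transfer. This buys a proof of all six parts without any appeal to saturation, at the cost of being tied to $\mathbb{R}^d$; the cited general arguments are portable to arbitrary spaces but need the stronger model-theoretic input. Either route is legitimate for the statement as given, since the theorem is stated only for subsets of $\mathbb{R}^d$.
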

\begin{proof}    We refer the reader to the original proofs in Robinson~\cite{aRob66} (or, to a
presentation  in Salbany \& Todorov~\cite{SalbTod98}).
\end{proof} 

	 The complete our survey on non-standard analysis we have to discuss two more important
principles: the {\em transfer principle} and {\em internal definition principle}. The transfer principle is
considered by many as the ``heart and soul of non-standard analysis''. The formulation of these two
principles however requires a more precise choice of our formal language. The reader who do not have taste
for mathematical logic might skip (or browse casually through) the rest of this section.

\begin{definition}[Formal Language]\label{D: Formal Language} Let $S$ be (as before) an infinite set and $V(S)$
be the superstructure on $S$. The formal \textbf{language} $\mathcal{L}\mathcal{A}\mathcal{N}(S)$ 
with {\em set of individuals} $S$ is
constructed as follows:
\begin{enumerate}
\item \label{No: alphabet} The \textbf{alphabet} of $\mathcal{L}\mathcal{A}\mathcal{N}(S)$ consists of
the  three mutually disjoint sets $\mathcal{A}\cup\mathcal{B}\cup V(S)$, where

\begin{description}
\item[(a)] $\mathcal{A}=\left\{=, \in, \neg, \wedge\,  \mbox{(and) }, \vee\, 
\mbox{(or) },\forall, \exists, \Rightarrow, \Leftrightarrow, (), [\;], \{\, \}\right\}$ is the (finite) set of
{\em symbols}.
\item[(b)] $\mathcal{B}=\left\{x, y, z, X, Y, Z, x_1, x_2,\dots\right\}$ is the countable set of  {\em variables}.
\end{description}

The members of $\mathcal{A}\cup\mathcal{B}\cup V(S)$ are called {\em letters}.  A {\em word} is any
finite string of letters.

\item The \textbf{vocabulary} of the
language $\mathcal{L}\mathcal{A}\mathcal{N}(S)$ consists of: {\em words, terms, predicates and
propositions}. They are defined recursively (by the length of the word), where all quantifiers in the predicates
and propositions are bounded by elements in $V(S)\setminus S$. In other words,  
$\mathcal{L}\mathcal{A}\mathcal{N}(S)$ allows only predicates (propositions) such as
$(\forall x_1\in A)P(x_1, x_2\dots x_n)$ or $(\forall x_1\in A)(\exists x_2\in B)P(x_1, x_2\dots x_n)$, 
where $A, B\in V(S)\setminus S$. Here $P(x_1, x_2\dots x_n)$ stands for a predicate in the  free
variables $x_1,\dots x_n\in\mathcal{B}$. Notice that the language
$\mathcal{L}\mathcal{A}\mathcal{N}(S)$ disallows predicates (propositions) such as  $(\forall x_1)P(x_1,
x_2\dots x_n)$ or
$(\forall x_1)(\exists x_2)P(x_1, x_2\dots x_n)$. 

\item We supply the set of propositions in the language $\mathcal{L}\mathcal{A}\mathcal{N}(S)$ with the
usual
\textbf{semantics} (true or false values) inherited from the Boolean structure of $V(S)\setminus S$.

\end{enumerate}
\end{definition}

\begin{examples} Here are our two basic examples:
\begin{enumerate}
\item \label{No: StandardFramework} Let $S=\mathbb{R}$. Then 
$\mathcal{L}\mathcal{A}\mathcal{N}(\mathbb{{R}})$ is the
\textbf{formal language of standard analysis}. 
\item\label{No: NonStandardFramework} Let $S={^*\mathbb{R}}$. Then
$\mathcal{L}\mathcal{A}\mathcal{N}(^*\mathbb{R})$  is the
\textbf{formal language of non-standard analysis}. 
\end{enumerate}
\end{examples}

	 For more details on the topic we refer to Davis~\cite{mDavis}, Lindstr\o m~\cite{tLin} and Chapter 2 in
Capi\'{n}ski \& Cutland~\cite{CapinskiCutland95}. We believe however that
the reader can successfully proceed to the rest of this article without more specialized knowledge in
logic. 
	
\begin{theorem}[Transfer Principle]\label{T: Transfer Principle} Let $P(x_1,\dots, x_q)$  be a predicate
in the language
$\mathcal{L}\mathcal{A}\mathcal{N}(\mathbb{{R}})$ and let $A_n\in V(\mathbb{R}),  n=1, 2,\dots,
q$. Let $P({A}_1,\dots, {A}_q)$ and $P({^*A}_1,\dots, {^*A}_q)$ be the propositions in the
languages $\mathcal{L}\mathcal{A}\mathcal{N}(\mathbb{{R}})$ and
$\mathcal{L}\mathcal{A}\mathcal{N}(^*\mathbb{{R}})$, respectively, obtained from $P(x_1, \dots, x_q)$ by
replacing all
$x's$ by 
$A's$ or $^*A's$, respectively. Then
$P({A}_1,\dots, {A}_q)$ and $P({^*A}_1,\dots,
{^*A}_q)$ are equivalent, i.e. $P({A}_1,\dots, {A}_q)\Leftrightarrow P({^*A}_1,\dots,
{^*A}_q)$.
\end{theorem}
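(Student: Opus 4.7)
The plan is to prove the Transfer Principle as a corollary of the stronger \L o\'s-type theorem adapted to our bounded-quantifier superstructure language. Namely, for every predicate $P(x_1,\dots,x_q)$ in $\mathcal{L}\mathcal{A}\mathcal{N}(\mathbb{R})$ and every $q$-tuple of tame nets $(A_{1,\varphi}),\dots,(A_{q,\varphi})$ in $V(\mathbb{R})^{\mathcal{D}_0}$,
\[
P\bigl(\langle A_{1,\varphi}\rangle,\dots,\langle A_{q,\varphi}\rangle\bigr)\ \text{holds in}\ V({^*\mathbb{R}})\ \Longleftrightarrow\ \bigl\{\varphi\in\mathcal{D}_0 : P(A_{1,\varphi},\dots,A_{q,\varphi})\text{ holds in }V(\mathbb{R})\bigr\}\in\mathcal{U}.
\]
Once this is established, the theorem follows immediately by specializing each $(A_{i,\varphi})$ to the constant net $A_{i,\varphi}=A_i$, since then the coordinate set is either all of $\mathcal{D}_0$ (when $P(A_1,\dots,A_q)$ is true) or empty (when false), and by property \#\ref{No: F3} and \#\ref{No: F1} of Lemma~\ref{L: List of Properties of U} exactly one of these lies in $\mathcal{U}$.

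I would prove the displayed equivalence by induction on the length of $P$, building up from atomic formulas through the connectives $\neg,\wedge,\vee,\Rightarrow,\Leftrightarrow$ to the bounded quantifiers $(\forall x\in B)$ and $(\exists x\in B)$. The atomic cases $x=y$ and $x\in y$ are precisely the definitions of $\langle A_\varphi\rangle=\langle B_\varphi\rangle$ and $\langle A_\varphi\rangle\in\langle B_\varphi\rangle$ in Definition~\ref{D: Distributional Non-Standard Model} (items \#\ref{No: *R} and the inductive definition of $\langle B_\varphi\rangle$ at higher levels). The connective step uses the Boolean properties of $\mathcal{U}$: conjunction uses closure under finite intersection (\#\ref{No: F2}); negation uses the ultrafilter property that exactly one of $A$ or $\mathcal{D}_0\setminus A$ lies in $\mathcal{U}$ (\#\ref{No: Ultra}); disjunction and implication reduce to the previous two. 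The bounded universal quantifier $(\forall x\in\langle B_\varphi\rangle)\,Q(x,\dots)$ reduces to the existential case by duality using negation.

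The hard case, and the main obstacle, is the bounded existential quantifier $(\exists x\in\langle B_\varphi\rangle)\,Q(x,\vec{y})$. The direction $(\Leftarrow)$ is straightforward: from a coordinatewise-a.e.\ witness one reads off a net whose equivalence class is a witness in $V({^*\mathbb{R}})$. The converse requires the Axiom of Choice: given a single witness $\langle C_\varphi\rangle$ in $V({^*\mathbb{R}})$ with $\langle C_\varphi\rangle\in\langle B_\varphi\rangle$ a.e.\ and $Q(\langle C_\varphi\rangle,\vec{\langle A_\varphi\rangle})$ true, one must produce an index set in $\mathcal{U}$ on which, coordinatewise, there really is some $c\in B_\varphi$ making $Q(c,\vec{A_\varphi})$ hold in $V(\mathbb{R})$; this is done by choosing such a $C_\varphi\in B_\varphi$ pointwise when possible and applying the induction hypothesis to the subformula $Q$, then intersecting the a.e.-sets (using \#\ref{No: F2}) to stay inside $\mathcal{U}$. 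Boundedness of the quantifier by $\langle B_\varphi\rangle$ is essential here, because it guarantees that the chosen witnesses form a tame net of level strictly less than that of $\langle B_\varphi\rangle$, keeping the induction inside the superstructure; this is precisely why unbounded quantifiers were excluded in Definition~\ref{D: Formal Language}. With the displayed \L o\'s equivalence in hand, the Transfer Principle as stated is then just the specialization to constant nets described above.
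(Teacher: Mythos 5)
The paper does not actually prove Theorem~\ref{T: Transfer Principle}: it refers to Davis and Lindstr\o m, and the proof given in those sources is precisely the \L o\'s-style induction you outline --- establish the a.e.-equivalence for arbitrary tame nets by induction on the formula, then specialize to constant nets, where the coordinate set is $\mathcal{D}_0$ or $\varnothing$ and the ultrafilter properties of Lemma~\ref{L: List of Properties of U} decide membership. So your route coincides with the intended (cited) one, and the skeleton --- atomic cases, connectives via the ultrafilter, bounded quantifiers as the only delicate step, with boundedness keeping witnesses tame --- is right. Two points in your sketch need repair. First, you have the role of the Axiom of Choice reversed in the existential case: choice is needed in the direction from $\bigl\{\varphi\in\mathcal{D}_0 : (\exists c\in B_\varphi)\,Q(c,\vec{A}_\varphi)\bigr\}\in\mathcal{U}$ to the internal statement, where one must \emph{select} a witness $C_\varphi\in B_\varphi$ for each $\varphi$ in that set in order to assemble a tame net representing an internal witness; the converse direction, which you describe as the one requiring choice, is in fact the easy one, since an internal witness $\langle C_\varphi\rangle$ already comes equipped with a representing net, and the induction hypothesis applied to $Q$ together with $\{\varphi : C_\varphi\in B_\varphi\}\in\mathcal{U}$ and closure under finite intersections (\#\ref{No: F2} of Lemma~\ref{L: List of Properties of U}) finishes it with no choice at all. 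Second, the atomic case is not entirely ``by definition'': membership $\langle A_\varphi\rangle\in\langle B_\varphi\rangle$ and level-$0$ equality are definitional, but the equivalence $\langle A_\varphi\rangle=\langle B_\varphi\rangle$ \ifff $A_\varphi=B_\varphi$ a.e.\ for nets of higher level requires a separate extensionality argument (if $A_\varphi\not=B_\varphi$ on a set in $\mathcal{U}$, choose, for each such $\varphi$, an element of the symmetric difference and use the resulting net to exhibit an internal element separating the two internal sets); this is exactly the point where the inductive definition in Definition~\ref{D: Distributional Non-Standard Model} must be shown to be well defined on a.e.-equivalence classes. With these two corrections your argument is the standard proof to which the paper appeals.
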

\begin{proof}  We refer the reader to  Davis~\cite{mDavis}
(for a presentation, see also Lindstr\o m~\cite{tLin}). 
\end{proof} 

\begin{corollary}[Field Properties]\label{C: Field Properties} $^*\mathbb{R}$ is a \textbf{non-archimedean
real} \textbf{closed} (thus, totally ordered)
\textbf{field}. Consequently, $^*\mathbb{C}$ is a \textbf{non-archimedean algebraically closed field} and
we  have the usual connection $^*\mathbb{C}={^*\mathbb{R}}(i)$, where
$i=\sqrt{-1}$.
\end{corollary}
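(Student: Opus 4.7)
The plan is to derive the field and order properties of $^*\mathbb{R}$ from those of $\mathbb{R}$ by a straightforward application of the Transfer Principle (Theorem~\ref{T: Transfer Principle}), and then handle the non-archimedean and complex parts separately.

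First I would observe that every axiom defining a real closed field can be written as a proposition in $\mathcal{L}\mathcal{A}\mathcal{N}(\mathbb{R})$ whose quantifiers are bounded by $\mathbb{R}$ (or by $\mathbb{R}[x]$, which belongs to $V(\mathbb{R})\setminus\mathbb{R}$): the field axioms, the total order axioms, the compatibility of the order with the ring operations, the statement that every positive element has a square root, and the statement that every polynomial of odd degree has a root. Applying Theorem~\ref{T: Transfer Principle} to each of these propositions converts $\mathbb{R}$ into $^*\mathbb{R}$ and $\mathbb{R}[x]$ into $^*\mathbb{R}[x]$, and the corresponding sentences remain true. Hence $^*\mathbb{R}$ is a real closed (thus totally ordered) field.

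Next I would establish that $^*\mathbb{R}$ is non-archimedean. By item~\#\ref{No: Canonical Infinitesimal} of Definition~\ref{D: Distributional Non-Standard Model}, the canonical infinitesimal $\rho=\left<R_\varphi\right>$ satisfies $0<\rho<1/n$ for every $n\in\mathbb{N}$. Equivalently, $1/\rho$ is a positive element of $^*\mathbb{R}$ strictly greater than every natural number, so the archimedean property fails in $^*\mathbb{R}$.

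For the complex part, I would first note that $^*\mathbb{C}$ can be identified with $^*\mathbb{R}(i)$: apply Transfer to the proposition $(\forall z\in\mathbb{C})(\exists a,b\in\mathbb{R})(z=a+ib)$ together with the field structure on $\mathbb{C}$ expressed in terms of pairs over $\mathbb{R}$; this yields $^*\mathbb{C}={^*\mathbb{R}}(i)$ with the induced field operations. Since $^*\mathbb{R}$ is real closed, the classical algebraic fact that $F(i)$ is algebraically closed whenever $F$ is real closed (Van Der Waerden~\cite{VanDerWaerden}, Chapter 11) gives algebraic closedness of $^*\mathbb{C}$; alternatively, one transfers directly the proposition asserting that every nonconstant polynomial with complex coefficients has a root in $\mathbb{C}$. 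Non-archimedean\-ness of $^*\mathbb{C}$ follows immediately from that of $^*\mathbb{R}\subseteq{^*\mathbb{C}}$, since $\rho$ remains a nonzero infinitesimal in $^*\mathbb{C}$.

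The only genuinely delicate point, and what I expect to be the main obstacle, is verifying that the relevant statements really are expressible as propositions in $\mathcal{L}\mathcal{A}\mathcal{N}(\mathbb{R})$ with quantifiers bounded by elements of $V(\mathbb{R})\setminus\mathbb{R}$: the algebraic closure axiom, for instance, quantifies over polynomials, so one must bound the quantifier by $\mathbb{C}[x]\in V(\mathbb{R})$ and express degree and evaluation via predicates referring to finite sequences of coefficients. Once this formalization is carried out, Theorem~\ref{T: Transfer Principle} applies uniformly and the corollary follows with no further work.
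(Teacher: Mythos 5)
Your proposal is correct and follows essentially the same route as the paper: the field and real/algebraically closed axioms are obtained by the Transfer Principle (the paper illustrates this with the multiplicative-inverse axiom and says ``similarly for the rest''), and non-archimedeanness comes from the proper extension $\mathbb{R}\subsetneqq{^*\mathbb{R}}$, which in the paper is witnessed by the same canonical infinitesimal $\rho=\left<R_\varphi\right>$ you invoke directly. Your extra care about expressing the closure axioms with quantifiers bounded in $V(\mathbb{R})\setminus\mathbb{R}$ (e.g.\ degree-by-degree over coefficient tuples) is a legitimate filling-in of a detail the paper leaves implicit, not a different argument.
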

\begin{proof}   The field properties follow from the transfer principle. For example, the proposition 
$(\forall x\in{^*\mathbb{R}})[x\not= 0\Rightarrow (\exists y\in{^*\mathbb{R}})(xy=1)]$ is true in
$\mathcal{L}\mathcal{A}\mathcal{N}(^*\mathbb{{R}})$ because $(\forall x\in{\mathbb{R}})[x\not=
0\Rightarrow (\exists y\in{\mathbb{R}})(xy=1)]$ is true in
$\mathcal{L}\mathcal{A}\mathcal{N}(\mathbb{{R}})$ and similarly for the rest of the {\em real closed field
axioms} and {\em algebraically closed field
axioms}. Also both $^*\mathbb{R}$ and $^*\mathbb{C}$ are
non-archimedean because they are proper extensions of $\mathbb{R}$ and
$\mathbb{C}$, respectively. 
\end{proof} 
\begin{remark}[An Alternative Proof] The above corollary can also be
proved  without  transfer principle. We have to involve the nets in
$\mathbb{R}^{\mathcal{D}_0}$ and $\mathbb{C}^{\mathcal{D}_0}$  and use the properties of the ultrafilter
$\mathcal{U}$ listed in Lemma~\ref{L: List of Properties of U}. This second proof is very similar to the proof of
Theorem~\ref{T: Algebraic Properties}. 
\end{remark} 

\begin{theorem}[Internal Definition Principle]\label{T: Internal Definition Principle} Let
$\mathcal{A}\in{^*V(\mathbb{R})}\setminus{^*\mathbb{R}}$ (be an internal set) and let
$\mathcal{A}_n\in{^*V(\mathbb{R})}, n=1,\dots,q$ (be non-standard real numbers or internal sets). Let
$P(x, x_1,\dots, x_q)$ be a predicate in
$q+1$ variables in the language $\mathcal{L}\mathcal{A}\mathcal{N}(\mathbb{{R}})$ and let ${P(x,
\mathcal{A}_1,\dots,
\mathcal{A}_q)}$ be the corresponding predicate in a single variable in the language
$\mathcal{L}\mathcal{A}\mathcal{N}(^*\mathbb{{R}})$. Then the set  $\mathcal{B}=:\{x\in\mathcal{A} :
P(x, \mathcal{A}_1,\dots, \mathcal{A}_q)\}$ is also internal.
\begin{proof} We refer the reader to Davis~\cite{mDavis}.
\end{proof}
\end{theorem}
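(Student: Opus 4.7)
The plan is to construct $\mathcal{B}$ explicitly as an internal set by applying the predicate pointwise over the index $\varphi$. Represent $\mathcal{A}$ and the parameters by tame nets: $\mathcal{A}=\langle A_\varphi\rangle$ and $\mathcal{A}_n=\langle A_{n,\varphi}\rangle$ with $\lambda((A_\varphi))=k$ and $\lambda((A_{n,\varphi}))=k_n$. For each $\varphi\in\mathcal{D}_0$ set
\[
B_\varphi=\{x\in A_\varphi:P(x,A_{1,\varphi},\dots,A_{q,\varphi})\},
\]
which is a well-defined element of $V(\mathbb{R})$ since $A_\varphi,A_{1,\varphi},\dots,A_{q,\varphi}\in V(\mathbb{R})$ and the predicate $P$ is a formula in $\mathcal{L}\mathcal{A}\mathcal{N}(\mathbb{R})$. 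Because $B_\varphi\subseteq A_\varphi\in V_k(\mathbb{R})$, the net $(B_\varphi)$ is tame with level at most $k$, so the candidate set $\mathcal{B}':=\langle B_\varphi\rangle\in{^*V(\mathbb{R})}$ is internal by Definition~\ref{D: Distributional Non-Standard Model}, item \#\ref{No: Internal}.

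The task reduces to proving $\mathcal{B}=\mathcal{B}'$. Let $y\in{^*V(\mathbb{R})}$, represented by a tame net $(y_\varphi)$. By the recursive definition of membership of internal elements, $y\in\mathcal{B}'$ iff $\{\varphi:y_\varphi\in B_\varphi\}\in\mathcal{U}$, which unfolds to
\[
\bigl\{\varphi:y_\varphi\in A_\varphi\bigr\}\cap\bigl\{\varphi:P(y_\varphi,A_{1,\varphi},\dots,A_{q,\varphi})\bigr\}\in\mathcal{U}.
\]
Using closure of $\mathcal{U}$ under finite intersections and supersets (Lemma~\ref{L: List of Properties of U}, \#\ref{No: F1}, \#\ref{No: F2}) together with its ultrafilter property (\#\ref{No: Ultra}), this is equivalent to the conjunction of $y\in\mathcal{A}$ (which is $\{\varphi:y_\varphi\in A_\varphi\}\in\mathcal{U}$) and the ``almost everywhere'' statement $\{\varphi:P(y_\varphi,A_{1,\varphi},\dots,A_{q,\varphi})\}\in\mathcal{U}$.

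The heart of the argument is identifying this second condition with ``$P(y,\mathcal{A}_1,\dots,\mathcal{A}_q)$ holds in $V(^*\mathbb{R})$''. This is the \L os-style theorem for bounded formulas: the truth value of such a formula at internal parameters equals the $\mathcal{U}$-almost-everywhere truth value of the same formula applied to any representing tame nets. I would prove it by induction on the complexity of $P$. The atomic cases $x=y$ and $x\in y$ follow directly from the definitions of equivalence and membership modulo $\mathcal{U}$ in item 6 of Definition~\ref{D: Distributional Non-Standard Model}. The Boolean connectives $\neg,\wedge,\vee,\Rightarrow,\Leftrightarrow$ are handled by the ultrafilter laws in Lemma~\ref{L: List of Properties of U}, notably that exactly one of $A,\mathcal{D}_0\setminus A$ lies in $\mathcal{U}$. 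Universal and bounded-existential quantifiers $(\forall z\in C)$ and $(\exists z\in C)$ reduce to each other via negation, so only one direction needs a new idea.

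The main obstacle will be the existential quantifier step: if the set $E=\{\varphi:(\exists z\in C_\varphi)\,P(z,\dots)\}$ lies in $\mathcal{U}$, one must produce a single internal witness $\langle z_\varphi\rangle$. This is done by using the axiom of choice pointwise on $E$ to pick $z_\varphi\in C_\varphi$ with the desired property, extending $z_\varphi$ arbitrarily (say to a fixed element of $V(\mathbb{R})$) off $E$, and verifying tameness by bounding levels uniformly via the tameness of $(C_\varphi)$. The reverse direction is immediate since any internal $z$ with $z\in\langle C_\varphi\rangle$ automatically yields $z_\varphi\in C_\varphi$ on a set in $\mathcal{U}$. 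Once this \L os-style principle is in place, the chain of equivalences above yields $y\in\mathcal{B}'\Leftrightarrow y\in\mathcal{A}\text{ and }P(y,\mathcal{A}_1,\dots,\mathcal{A}_q)\Leftrightarrow y\in\mathcal{B}$, completing the proof that $\mathcal{B}=\mathcal{B}'$ is internal.
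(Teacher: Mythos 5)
Your construction is correct: defining $B_\varphi=\{x\in A_\varphi:P(x,A_{1,\varphi},\dots,A_{q,\varphi})\}$ pointwise and proving $\mathcal{B}=\langle B_\varphi\rangle$ via a \L os-type transfer for bounded formulas (induction on complexity, with the choice-of-witness argument for the bounded existential quantifier and the tameness bookkeeping you indicate) is exactly the standard argument. The paper itself offers no proof but defers to Davis, and the argument given there is essentially the one you reconstruct, so your proposal matches the intended approach.
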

\begin{remark}[Axiomatic Approach]\label{R: Axiomatic Approach}  The extension, saturation and
transfer principles are  theorems in the distributional model presented above. In one of the  axiomatic
approaches to non-standard analysis however these three principles are treated as axioms. For a discussion we
refer to  (Lindstr\o m~\cite{tLin}, pp. 81-83 and pp. 97-98).  
\end{remark}

\section{J.F. Colombeau's Theory of Generalized Functions and Non-Standard
Analysis}\label{S: J.F. Colombeau's Non-Linear Theory of Generalized Functions and Non-Standard
Analysis}

	\quad We show that  the field of asymptotic numbers
$\widehat{\mathbb{C}^{\mathcal{D}_0}}$ (Definition~\ref{D: Asymptotic Numbers}) is isomorphic to a
particular Robinson field ${^\rho\mathbb{C}}$ (Robinson~\cite{aRob73}) of $\rho$-asymptotic
numbers. We also prove that the algebra of asymptotic functions
$\widehat{\mathcal{E}(\Omega)^{\mathcal{D}_0}}$ (Definition~\ref{D: Asymptotic Functions}) is
isomorphic to a particular algebra of
$\rho$-asymptotic functions $^\rho\mathcal{E}(\Omega)$ introduced in (Oberguggenberger \&
Todorov\cite{OberTod98}). Both
${^\rho\mathbb{C}}$ and
$^\rho\mathcal{E}(\Omega)$ are defined in the framework of non-standard analysis 
(see Definition~\ref{D: A. Robinson's rho-Asymptotic Numbers} and Definition~\ref{D: rho-Asymptotic
Functions} below). As far as we treat
$\widehat{\mathbb{C}^{\mathcal{D}_0}}$ and $\widehat{\mathcal{E}(\Omega)^{\mathcal{D}_0}}$ as
modified and, we believe, improved versions of Colombeau's $\overline{\mathbb{C}}$ and
$\mathcal{G}(\Omega)$, respectively, these results establish a
connection between Colombeau theory and non-standard analysis. 

	 Recall the definition of A. Robinson's field ${^\rho\mathbb{R}}$ (Robinson~\cite{aRob73}) and its
complex counterpart ${^\rho\mathbb{C}}$.

\begin{definition}[Robinson $\rho$-Asymptotic Numbers]\label{D: A. Robinson's rho-Asymptotic
Numbers} Let $^*\mathbb{R}$ and $^*\mathbb{C}$ be the
non-standard extensions of $\mathbb{R}$ and $\mathbb{C}$, respectively in an arbitrary
$\kappa$-saturated non-standard model with set of individuals $\mathbb{R}$, where $\kappa$ is an infinite
cardinal. (In particular, this could be
the distributional non-standard model constructed in Section~\ref{S: Distributional Non-Standard Model}).
Let $\rho$ be a positive infinitesimal in
${^*\mathbb{R}}$. Following Robinson~\cite{aRob73}, we define:
\begin{enumerate}
\item  The sets of the {\bf
$\rho$-moderate} and {\bf $\rho$-negligible} non-standard complex numbers are
\begin{align}\label{E: rho-moderate}
&\mathcal{M}_\rho(^*\mathbb{C})=\left\{\zeta\in{^*\mathbb{C}} : \; |\zeta|\leq\rho^{-m} \text{\: for
some\;} m\in\mathbb{N}\right\},\\ 
&\mathcal{N}_\rho(^*\mathbb{C})=\left\{\zeta\in{^*\mathbb{C}}:\;
|\zeta|<\rho^{n} \text{\: for all\;}
n\in\mathbb{N}\right\},
\end{align}
respectively. \textbf{Robinson field of complex $\rho$-asymptotic numbers} is the factor ring
${^\rho\mathbb{C}}=:
\mathcal{M}_\rho(^*\mathbb{C})/\mathcal{N}_\rho(^*\mathbb{C})$. We denote by $\widehat{\zeta}$ the
equivalence class of $\zeta\in\mathcal{M}_\rho(^*\mathbb{C})$.  For example,
$\widehat{\rho}$  is the asymptotic number corresponding to
$\rho$. 
\item If $\mathcal{S}\subseteq{^*\mathbb{C}}$, we let
$\widehat{S}=\{\widehat{\zeta} : \zeta\in{\mathcal{S}\cap\mathcal{M}_\rho(^*\mathbb{C})}\}$. 
If  $S\subseteq{\mathbb{C}}$, then
$^\rho\!S=:\widehat{^*S}$ is called the $\rho$-extension of $S$. In particular, the
field of \textbf{Robinson real $\rho$-asymptotic numbers} ${^\rho\mathbb{R}}$ is the
$\rho$-extension of $\mathbb{R}$, i.e. 
${^\rho\mathbb{R}}=\widehat{{^*\mathbb{R}}}$. We define an order relation in ${^\rho\mathbb{R}}$ as
follows:  Let $\widehat{\xi}\in{^\rho\mathbb{R}}$ and 
$\widehat{\xi}\not=0$. Then $\widehat{\xi}> 0$ if $\xi>0$ in $^*\mathbb{R}$.

\item We supply ${^\rho\mathbb{C}}$ with the \textbf{order topology}, i.e. the product topology inherited
from the order topology on ${^\rho\mathbb{R}}$.
\item \label{No: Ultra-Norm} The \textbf{valuation} $v:{^\rho\mathbb{C}}\to\mathbb{R}\cup\{\infty\}$  is
defined by
$v(0)=\infty$ and 
$v(\widehat{\zeta})={\rm st}\left(\ln{|\zeta|}/\ln{\rho}\right)$ if 
$\widehat{\zeta}\in{^\rho\mathbb{C}},\; \widehat{\zeta}\not=0$. We define an \textbf{ultra-norm}
$|\cdot|_v: {^\rho\mathbb{C}}\to\mathbb{R}$ by the formula $|z|_v=e^{-v(z)}$ (under the
convention that $e^{-\infty}=0$) and an {\bf ultra-metric} by  $d_v(a, b)=|a-b|_v$.

\item Let $\xi=(\xi_1,\dots, \xi_d)\in{^*\mathbb{R}^d}$ and $||\xi||\in{\mathcal{M}_\rho(^*\mathbb{C})}$.
We  define $\widehat{\xi}\in{^\rho\mathbb{R}^d}$ by 
$\widehat{\xi}=(\widehat{\xi_1},\dots, \widehat{\xi_d})$. Let
$\Omega$ be an open set of $\mathbb{R}^d$ and 
$\mu(\Omega)$ be the monad of
$\Omega$ (\#\ref{No: Monad}  in Definition~\ref{D: Distributional Non-Standard Model}). We denote
$\widehat{\mu(\Omega)}=\{\widehat{\xi} :
\xi\in\mu(\Omega)\}$.  
\end{enumerate}
\end{definition}

 The next result appears in (Lightstone \& Robinson~(\cite{LiRob}, p. 97).
\begin{theorem}[Principles of Permanence]\label{T; Principles of Permanence} Let
$\mathcal{A}\subseteq{^*\mathbb{R}}$ be an internal set. 
\begin{description}
\item{\bf (a) Overflow of $\mathcal{M}_\rho(^*\mathbb{R})$:} If $\mathcal{A}$ contains
arbitrarily large numbers in
$\mathcal{M}_\rho(^*\mathbb{R})$, then $\mathcal{A}$ contains arbitrarily small
numbers in
$^*\mathbb{R}\setminus\mathcal{M}_\rho(^*\mathbb{R})$. 
\item{\bf (b) Underflow of
$\mathcal{M}_\rho(^*\mathbb{R})\setminus\mathcal{N}_\rho(^*\mathbb{R})$:} If
$\mathcal{A}$ contains arbitrarily small numbers in
$\mathcal{M}_\rho(^*\mathbb{R})\setminus\mathcal{N}_\rho(^*\mathbb{R})$, then
$\mathcal{A}$ contains arbitrarily large numbers in
$\mathcal{N}_\rho(^*\mathbb{R})$. 
\item{\bf (c) Overflow of $\mathcal{N}_\rho(^*\mathbb{R})$:} If $\mathcal{A}$ contains
arbitrarily large numbers in\newline $\mathcal{N}_\rho(^*\mathbb{R})$, then $\mathcal{A}$
contains arbitrarily small numbers in
$\mathcal{M}_\rho(^*\mathbb{R})\setminus\mathcal{N}_\rho(^*\mathbb{R})$. 
\item{\bf (d) Underflow of $^*\mathbb{R}\setminus\mathcal{M}_\rho(^*\mathbb{R})$:} If
$\mathcal{A}$ contains arbitrarily small numbers in
$^*\mathbb{R}\setminus\mathcal{M}_\rho(^*\mathbb{R})$, then
$\mathcal{A}$ contains arbitrarily large numbers in $\mathcal{M}_\rho(^*\mathbb{R})$.
\end{description}
\end{theorem}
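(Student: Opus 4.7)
The plan is to prove each of (a)--(d) by the same three-step strategy: (i) fix an arbitrary witness on the target side, (ii) use the internal definition principle (Theorem~\ref{T: Internal Definition Principle}) to encode the hypothesis as an internal subset of $^*\mathbb{N}$ parameterizing candidate witnesses drawn from $\mathcal{A}$, and (iii) apply the externality of $\mathbb{N}$ in $^*\mathbb{N}$ --- equivalently, the classical spilling principles of Theorem~\ref{T: Spilling Principles} --- to extend the witness into the required $\rho$-class. This works because the four $\rho$-classes in question are defined by quantifying over standard naturals in the exponent of $\rho$, so the gap between $\mathcal{M}_\rho$ and $\mathcal{N}_\rho$ is precisely the gap between $\mathbb{N}$ and $^*\mathbb{N}$ that the classical spilling principles exploit.

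For (a), fix $c \in {^*\mathbb{R}} \setminus \mathcal{M}_\rho(^*\mathbb{R})$ and set
\[
U_c = \{n \in {^*\mathbb{N}} : (\exists a \in \mathcal{A})(\rho^{-n} \leq a \leq c)\},
\]
which is internal by Theorem~\ref{T: Internal Definition Principle}. For each standard $n$, the hypothesis yields $a \in \mathcal{A} \cap \mathcal{M}_\rho$ with $a \geq \rho^{-n}$; since $a \in \mathcal{M}_\rho$ and $c \notin \mathcal{M}_\rho$, automatically $a < c$, so $n \in U_c$. Thus $\mathbb{N} \subseteq U_c$, and the overflow of $\mathcal{F}(^*\mathbb{R})$ (Theorem~\ref{T: Spilling Principles}(i)) produces an infinite $N \in U_c$; the associated $a \in \mathcal{A}$ then satisfies $\rho^{-N} \leq a \leq c$ and lies outside $\mathcal{M}_\rho$ because $\rho^{-N}$ exceeds every $\rho^{-m}$ with $m \in \mathbb{N}$.

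Parts (b), (c), (d) follow the same template with adjusted signs and directions. For (c), for instance, fix positive $c \in \mathcal{M}_\rho \setminus \mathcal{N}_\rho$ and define
\[
W_c = \{n \in {^*\mathbb{N}} : (\exists a \in \mathcal{A})(\rho^n \leq a \leq c)\}.
\]
For every infinite $N$, applying the hypothesis to $b = \rho^N \in \mathcal{N}_\rho$ produces $a \in \mathcal{A} \cap \mathcal{N}_\rho$ with $a \geq \rho^N$; since $c \geq \rho^p$ for some standard $p$ while $a < \rho^p$, we have $a < c$, hence ${^*\mathbb{N}} \setminus \mathbb{N} \subseteq W_c$. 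The complement ${^*\mathbb{N}} \setminus W_c$ is then internal and contained in $\mathbb{N}$, so by transfer of the well-ordering of $\mathbb{N}$ it is either empty or admits a largest element $m \in \mathbb{N}$; either way some standard $n$ lies in $W_c$, and the corresponding $a \in \mathcal{A}$ with $\rho^n \leq a \leq c$ lies in $\mathcal{M}_\rho \setminus \mathcal{N}_\rho$. Parts (b) and (d) are handled analogously, using the witness sets $V_c = \{n \in {^*\mathbb{N}} : (\exists a \in \mathcal{A})(c \leq a \leq \rho^n)\}$ and its $\rho^{-n}$ counterpart, respectively.

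The only substantive obstacle is the bookkeeping required to match, for each of (a)--(d), the direction of the inequality in $\mathcal{A}$ with the sign of the exponent of $\rho$ and with which external set ($\mathbb{N}$ or ${^*\mathbb{N}} \setminus \mathbb{N}$) the constructed internal witness set contains; in each case one must verify that the hypothesis does force the claimed inclusion. A conceptual shortcut that bypasses the case analysis entirely is the order-reversing substitution $y = \log|x|/\log\rho$, which carries the four $\rho$-classes under discussion bijectively onto the four classical archimedean classes of $^*\mathbb{R}$ that appear in Theorem~\ref{T: Spilling Principles}, reducing (a)--(d) term-by-term to that theorem.
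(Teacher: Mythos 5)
Your proposal is correct in substance, and it is worth noting that the paper itself gives no proof of this theorem at all: it is stated with a citation to Lightstone and Robinson~\cite{LiRob}, p.~97. So your argument is necessarily a different route, and a good one: you derive the $\rho$-graded permanence principles from tools the paper does state --- the internal definition principle (Theorem~\ref{T: Internal Definition Principle}) and the classical spilling principles (Theorem~\ref{T: Spilling Principles}) --- by encoding the hypothesis in an internal set of exponents $n$ of $\rho^{\pm n}$ and spilling between $\mathbb{N}$ and ${}^*\mathbb{N}\setminus\mathbb{N}$. I checked (a) and (c) in detail and they are sound, and your witness sets for (b) and (d) do go through the same way; what this buys is a self-contained proof inside the paper's framework rather than an external reference. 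Three small repairs: (1) throughout you need the positivity convention (fix $c>0$, read ``arbitrarily large/small'' in absolute value); e.g.\ in (a) the step ``automatically $a<c$'' fails if $c$ is a hugely negative element of ${}^*\mathbb{R}\setminus\mathcal{M}_\rho({}^*\mathbb{R})$, so say explicitly that $c>0$, whence $c>\rho^{-m}\geq |a|\geq a$. (2) The ``transfer of the well-ordering'' step should be phrased as: every nonempty internal subset of ${}^*\mathbb{N}$ that is bounded above has a greatest element (transfer of the corresponding statement about $\mathbb{N}$); alternatively, just invoke underflow of $\mathcal{L}({}^*\mathbb{R})$, part (iv) of Theorem~\ref{T: Spilling Principles}, since $W_c$ contains arbitrarily small infinitely large numbers. (3) The closing ``conceptual shortcut'' is inaccurate as stated: $y=\log|x|/\log\rho$ sends $\mathcal{N}_\rho$, $\mathcal{M}_\rho\setminus\mathcal{N}_\rho$, and ${}^*\mathbb{R}\setminus\mathcal{M}_\rho$ to the positively infinite, the finite, and the negatively infinite numbers respectively, which are not the four classes $\mathcal{I}$, $\mathcal{F}\setminus\mathcal{I}$, $\mathcal{F}$, $\mathcal{L}$ appearing in Theorem~\ref{T: Spilling Principles}; to match those one needs a further exponentiation, e.g.\ the order-preserving internal map $z=|x|^{-1/\log\rho}$, together with the remark that the image of the internal set $\mathcal{A}$ under an internal map is internal. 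None of these affects the main case-by-case argument, which stands on its own.
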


\begin{theorem}[Field Properties]\label{T: Field Properties}
$^\rho\mathbb{C}$ is an algebraically closed field, 
$^\rho\mathbb{R}$ is a real closed field and we have the usual connection
$^\rho\mathbb{C}={^\rho\mathbb{R}}(i)$. 
\end{theorem}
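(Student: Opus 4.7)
The plan is to mirror the proof of Theorem \ref{T: Algebraic Properties} as closely as possible, with the ultrafilter/net machinery replaced by the non-standard objects already available in $^*\mathbb{C}$. First, I would dispose of the field structure: $\mathcal{M}_\rho(^*\mathbb{C})$ is closed under the ring operations (the bound $|\zeta_1+\zeta_2|\leq 2\max(|\zeta_1|,|\zeta_2|)$ and $|\zeta_1\zeta_2|=|\zeta_1|\,|\zeta_2|$ keep us inside some $\rho^{-m}$), and $\mathcal{N}_\rho(^*\mathbb{C})$ is an ideal by the same estimates. Moreover $\mathcal{N}_\rho$ is maximal in $\mathcal{M}_\rho$: if $\zeta\in\mathcal{M}_\rho(^*\mathbb{C})\setminus\mathcal{N}_\rho(^*\mathbb{C})$, then $\rho^p\leq |\zeta|\leq \rho^{-m}$ for some $p,m\in\mathbb{N}$, so $\zeta\neq 0$ in $^*\mathbb{C}$ and by Corollary \ref{C: Field Properties} the inverse $1/\zeta$ exists in $^*\mathbb{C}$ and satisfies $|1/\zeta|\leq\rho^{-p}$, i.e.\ $1/\zeta\in\mathcal{M}_\rho(^*\mathbb{C})$. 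Hence $^\rho\mathbb{C}$ is a field, and an analogous argument (now inside $^*\mathbb{R}$, which is totally ordered) gives that $^\rho\mathbb{R}$ is a field.

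For the decomposition $^\rho\mathbb{C}={^\rho\mathbb{R}}+i\,{^\rho\mathbb{R}}$, write every $\zeta\in{^*\mathbb{C}}$ as $\zeta=\xi+i\eta$ with $\xi,\eta\in{^*\mathbb{R}}$. Since $|\xi|,|\eta|\leq|\zeta|\leq\sqrt{2}\max(|\xi|,|\eta|)$, $\zeta$ is $\rho$-moderate (resp.\ $\rho$-negligible) if and only if both $\xi$ and $\eta$ are. Therefore the projections descend to a well-defined decomposition $\widehat{\zeta}=\widehat{\xi}+i\widehat{\eta}$, and uniqueness of the decomposition is immediate.

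To prove algebraic closure of $^\rho\mathbb{C}$, I would copy the strategy of Theorem \ref{T: Algebraic Properties}. Let $P(x)=x^p+a_{p-1}x^{p-1}+\cdots+a_0$ be a monic polynomial of degree $p\geq 1$ over $^\rho\mathbb{C}$, and choose representatives $\zeta_k\in\mathcal{M}_\rho(^*\mathbb{C})$ with $a_k=\widehat{\zeta_k}$. The internal polynomial $Q(x)=x^p+\zeta_{p-1}x^{p-1}+\cdots+\zeta_0\in{^*\mathbb{C}}[x]$ has, by transfer applied to the algebraic closedness of $\mathbb{C}$ (Corollary \ref{C: Field Properties}), a root $\xi\in{^*\mathbb{C}}$. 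The classical Cauchy bound $|\xi|\leq 1+\sum_{k=0}^{p-1}|\zeta_k|$ is a first-order sentence true in $\mathbb{C}$, so by transfer it holds in $^*\mathbb{C}$; since each $|\zeta_k|\leq\rho^{-m_k}$, picking $m=\max_k m_k+1$ shows $|\xi|\leq\rho^{-m}$, i.e.\ $\xi\in\mathcal{M}_\rho(^*\mathbb{C})$. A direct computation then gives $P(\widehat{\xi})=\widehat{Q(\xi)}=\widehat{0}=0$. Finally, the fact that $^\rho\mathbb{R}$ is real closed follows from the algebraic closure of $^\rho\mathbb{C}={^\rho\mathbb{R}}(i)$ together with the classical characterization of real closed fields (Van Der Waerden \cite{VanDerWaerden}, Chapter 11), exactly as in the proof of Theorem \ref{T: Algebraic Properties}.

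The main technical point is the invocation of transfer for the Cauchy bound and for algebraic closure of $^*\mathbb{C}$; everything else is purely bookkeeping with moderate and negligible scales. One minor subtlety worth spelling out is that the root $\xi$ supplied by transfer need not be unique, and different choices may yield different classes in $^\rho\mathbb{C}$, but for existence of a single root this is harmless. No saturation of the non-standard model is needed beyond what is already available from Section \ref{S: Distributional Non-Standard Model}.
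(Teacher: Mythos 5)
Your proposal is correct and follows essentially the paper's own route: algebraic closure is obtained exactly as in the paper (lift a monic polynomial to a representative $Q$ over $^*\mathbb{C}$, get a root by transfer of algebraic closedness of $\mathbb{C}$, use the Cauchy-type bound $|\xi|\leq 1+\sum_k|\zeta_k|$ to see the root is $\rho$-moderate, and pass to the quotient), and real closedness of ${^\rho\mathbb{R}}$ is deduced from ${^\rho\mathbb{C}}={^\rho\mathbb{R}}(i)$ via the classical result cited from Van Der Waerden. The only cosmetic difference is that you verify the field property directly by inverting non-negligible moderate elements (mirroring the proof of Theorem~\ref{T: Algebraic Properties}), whereas the paper cites Lightstone \& Robinson for ${^\rho\mathbb{R}}$ being a field; both are routine.
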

\begin{proof} The connection
$^\rho\mathbb{C}={^\rho\mathbb{R}}(i)$ follows directly from the definition of 
$^\rho\mathbb{C}$ and $^\rho\mathbb{R}$. The proof that
$^\rho\mathbb{R}$ is a field can be found in (Lightstone \& Robinson~\cite{LiRob}, p.78). It follows
that $^\rho\mathbb{C}$ is also a field. Let $P(x)=x^p+a_{p-1}x^{p-1}+\dots+a_0$ be a polynomial with
coefficients in $^\rho\mathbb{C}$ and a degree $p\geq 1$. We have $a_n=\widehat{\alpha_n}$ for some
$\alpha_n\in\mathcal{M}_\rho(^*\mathbb{C})$. We let
$Q(x)=x^p+\alpha_{p-1}x^{p-1}+\dots+\alpha_0$. Next,  we observe that $^*\mathbb{C}$ is an
algebraically closed field by transfer principle (cf. Theorem~\ref{T: Transfer Principle} in this article or Davis~\cite{mDavis}) since $\mathbb{C}$ is an
algebraically closed field. Thus the equation
$Q(\zeta)=0$ has a solution
$\zeta$ in ${^*\mathbb{C}}$. The estimation
$|\zeta|\leq 1+|\alpha_{p-1}|+\dots+|\alpha_0|$ shows that $\zeta\in\mathcal{M}_\rho(^*\mathbb{C})$. 
Thus $P(\widehat{\zeta})=\widehat{Q(\zeta)}=\widehat{0}=0$ proving that  $^\rho\mathbb{C}$ is an
algebraically closed field. It follows that  $^\rho\mathbb{R}$ is a real closed field as a maximal real subfield of
$^\rho\mathbb{C}$ (Van Der Waerden~\cite{VanDerWaerden}, Chapter 11).
\end{proof}

	 We turn to the connection between Robinson's theory of the field $^\rho\mathbb{R}$ and the field of
asymptotic numbers defined in Definition~\ref{D: Asymptotic Numbers}.
\begin{theorem}[Isomorphic Fields]\label{T: Isomorphic Fields} Let $^*\mathbb{R}$ and $^*\mathbb{C}$ be
the non-standard extensions of $\mathbb{R}$ and $\mathbb{C}$, respectively (\#\ref{No: *R} and \#\ref{No:
Extension Mapping}, Definition~\ref{D: Distributional Non-Standard Model}) defined within
our distributional  non-standard  model (Section~\ref{S: Distributional Non-Standard
Model}). Let $\rho=\left<R_\varphi\right>$ be the canonical infinitesimal  in
${^*\mathbb{R}}$ (\#\ref{No: Canonical Infinitesimal} in Definition~\ref{D: Distributional Non-Standard
Model}). Then:
\begin{description}
\item[(i)] If $(A_\varphi)\in\mathbb{C}^{\mathcal{D}_0}$, then
$(A_\varphi)\in\mathcal{M}(\mathbb{C}^{\mathcal{D}_0})$ (Definition~\ref{D: Asymptotic
Numbers}) \ifff 
 $\left<A_\varphi\right>\in\mathcal{M}_\rho(^*\mathbb{C})$. 

\item[(ii)] The fields $\widehat{\mathbb{C}^{\mathcal{D}_0}}$ and
$\widehat{\mathbb{R}^{\mathcal{D}_0}}$ are isomorphic to
${^\rho\mathbb{C}}$ and ${^\rho\mathbb{R}}$, respectively, under
the mapping $\widehat{A_\varphi}\to \widehat{\left<A_\varphi\right>}$ from
$\widehat{\mathbb{C}^{\mathcal{D}_0}}$ to ${^\rho\mathbb{C}}$. This isomorphism preserves also the
valuation, non-archimedean norm and ultra-metric (Definition~\ref{D: Topology, Valuation, Ultra-Norm,
Ultra-Metric}).
\item[(iii)] The order topology and the metric topology on $\widehat{\mathbb{C}^{\mathcal{D}_0}}$ are the
same.
\end{description}
\end{theorem}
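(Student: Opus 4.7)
The plan is to prove the three parts in order: (i) reduces to a direct reindexing at the level of nets, (ii) bootstraps from (i) to the field isomorphism and its order- and valuation-preserving refinements, and (iii) extracts the topological statement from a common basis of neighborhoods.

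For part (i), since $\rho=\langle R_\varphi\rangle$, the pointwise ring operations in $^*\mathbb{R}$ yield $\rho^{-m}=\langle R_\varphi^{-m}\rangle$, and the order in $^*\mathbb{R}$ (Definition~\ref{D: Distributional Non-Standard Model}) translates $|\langle A_\varphi\rangle|\leq \rho^{-m}$ into $|A_\varphi|\leq R_\varphi^{-m}$ a.e.\ on $\mathcal{D}_0$. Quantifying over $m\in\mathbb{N}$ gives the moderateness equivalence. The same argument with strict inequalities and swapped quantifiers shows $(A_\varphi)\in\mathcal{N}(\mathbb{C}^{\mathcal{D}_0})$ iff $\langle A_\varphi\rangle\in\mathcal{N}_\rho(^*\mathbb{C})$, which I will need below.

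For part (ii), the two variants of (i) together show that $\Phi\colon\widehat{A_\varphi}\mapsto\widehat{\langle A_\varphi\rangle}$ is well defined and injective. Surjectivity is immediate from the construction of $^*\mathbb{C}$: every $\zeta\in\mathcal{M}_\rho(^*\mathbb{C})$ is represented by some net $(A_\varphi)$, which is then moderate by (i). The map $\Phi$ is a ring homomorphism because both factor rings inherit their operations from the same pointwise operations on nets, and it preserves the order since both $\widehat{A_\varphi}>0$ and $\widehat{\langle A_\varphi\rangle}>0$ unfold to $A_\varphi>0$ a.e. For the valuation, the key observation is that $v(w)=\sup\{q\in\mathbb{Q}\,:\,w/\widehat{\rho}^{\,q}\ \text{is infinitesimal}\}$ holds by definition in $\widehat{\mathbb{C}^{\mathcal{D}_0}}$ and, in $^\rho\mathbb{C}$, follows from a short calculation with standard parts of logarithms: ${\rm st}(\ln|\zeta|/\ln\rho)\geq q$ is equivalent to $\ln|\zeta|/\ln\rho>q-\ln n/\ln\rho$ for every standard $n$ (since $\ln n/\ln\rho$ is infinitesimal), which rearranges to $|\zeta|<\rho^q/n$ in $^*\mathbb{R}$ for every $n$, i.e.\ to $\widehat{\zeta}/\widehat{\rho}^{\,q}$ being infinitesimal in $^\rho\mathbb{C}$. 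Since $\Phi$ preserves the infinitesimal relation (by the same a.e.\ translation), the two valuations agree; the ultra-norm and ultra-metric are defined directly from $v$, so they are preserved as well.

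For part (iii), the strategy is to exhibit a common neighborhood basis $B_n=\{z:|z|<\widehat{\rho}^{\,n}\}$, $n\in\mathbb{N}$, at $0$ for both topologies; translation-invariance then upgrades this to equality of the full topologies. For the metric topology, $|z|<\widehat{\rho}^{\,n}$ forces $v(z)\geq n$ by Theorem~\ref{T: Ultra-Properties}(i)(d), so each $B_n$ contains the metric ball $\{z:v(z)>n\}$, and conversely any metric ball $\{z:v(z)>r\}$ contains $B_n$ as soon as $n>r$. I expect the main obstacle to be showing that $\{B_n\}$ is also an order basis at $0$: given $\epsilon>0$ in $\widehat{\mathbb{R}^{\mathcal{D}_0}}$, I must produce an $n$ with $\widehat{\rho}^{\,n}<\epsilon$. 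I plan to argue by contraposition, using a representative $\epsilon=\widehat{A_\varphi}$ with $A_\varphi>0$ a.e.: if $\widehat{\rho}^{\,n}\geq\epsilon$ for every $n$, then $0<A_\varphi\leq R_\varphi^n$ a.e.\ for every $n$, forcing $(A_\varphi)\in\mathcal{N}(\mathbb{C}^{\mathcal{D}_0})$ and hence $\epsilon=0$, a contradiction.
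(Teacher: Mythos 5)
Your proposal is correct, and for parts (i) and (ii) it follows essentially the paper's own route: the a.e.\ translation of the moderateness and negligibility conditions through the ultrafilter $\mathcal{U}$ is exactly how the paper proves (i) and the injectivity of $\widehat{A_\varphi}\mapsto\widehat{\left<A_\varphi\right>}$, with ring-operation preservation left to the reader; you go further by spelling out surjectivity, order preservation, and the valuation claim, none of which the paper's proof addresses explicitly. Your bridge for the valuation --- identifying ${\rm st}(\ln|\zeta|/\ln\rho)$ with $\sup\{q\in\mathbb{Q} : \widehat{\zeta}/\widehat{\rho}^{\,q}\approx 0\}$ --- is the right one, but note that the equivalence you state, ``${\rm st}(\ln|\zeta|/\ln\rho)\geq q$ \ifff $\widehat{\zeta}/\widehat{\rho}^{\,q}\approx 0$,'' fails at the boundary ${\rm st}(\ln|\zeta|/\ln\rho)=q$ (take $\zeta=\rho^{q}$); this is harmless because the supremum over rationals is insensitive to that boundary case, so only the phrasing needs repair. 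Where you genuinely diverge is part (iii): the paper simply cites Todorov and Wolf for the coincidence of the order and metric topologies on ${^\rho\mathbb{C}}$, valid for any model and any $\rho$, whereas you give a self-contained argument via the common neighborhood base $B_n=\{z : |z|<\widehat{\rho}^{\,n}\}$ at $0$, whose key point is the coinitiality fact that every positive $\epsilon\in\widehat{\mathbb{R}^{\mathcal{D}_0}}$ dominates some $\widehat{\rho}^{\,n}$, proved by contraposition through negligibility. That argument is sound, with two small touch-ups: the step from $\widehat{\rho}^{\,n}\geq\epsilon$ to $A_\varphi\leq R_\varphi^{\,n}$ a.e.\ holds only up to a negligible error in the representatives, which you can absorb into $R_\varphi^{\,n-1}$, and the inclusion of a metric ball in $B_n$ is cleanest as $\{z : v(z)>n+1\}\subseteq\{z : |z|\leq\widehat{\rho}^{\,n+1}\}\subseteq B_n$ (the contrapositive of Theorem~\ref{T: Ultra-Properties}(i)(d) gives only the non-strict inequality). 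Your route buys independence from the external reference; the paper's citation buys generality, since the Todorov--Wolf result applies to ${^\rho\mathbb{C}}$ for an arbitrary non-standard model and arbitrary positive infinitesimal $\rho$, which is also what makes Corollary~\ref{C: A Generalization} immediate.
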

\begin{proof}  (i) $(A_\varphi)\in\mathcal{M}(\mathbb{C}^{\mathcal{D}_0})$ \ifff 
$(\exists m\in\mathbb{N})\{\varphi\in\mathcal{D}_0 : |A_\varphi|\leq (R_\varphi)^{-m}\}\in\mathcal{U}$
\ifff $(\exists m\in\mathbb{N})( |\left<A_\varphi\right>|\leq\rho^{-m})$ \ifff
$\left<A_\varphi\right>\in\mathcal{M}_\rho(^*\mathbb{C})$ as required.

	(ii) $\widehat{\left<A_\varphi\right>}=0$ in $^\rho\mathbb{C}$ \ifff
$(\forall n\in\mathbb{N})(|\left<A_\varphi\right>|<\left<R_\varphi\right>^n)$ in $^*\mathbb{C}$
\ifff  
\[
(\forall n\in\mathbb{N})(\left\{\varphi\in\mathcal{D}_0 : |A_\varphi|<
(R_\varphi)^n\right\}\in\mathcal{U}),
\]
\ifff
$(A_\varphi)\in\mathcal{N}(\mathbb{C}^{\mathcal{D}_0})$ (Definition~\ref{D: Asymptotic Numbers})
\ifff $\widehat{A_\varphi}=0$ in $\widehat{\mathbb{C}^{\mathcal{D}_0}}$ which means that the
mapping $\widehat{A_\varphi}\to\widehat{\left<A_\varphi\right>}$ is injective.  We leave to the
reader to verify that this mapping preserves the ring operations.

	(iii) The order topology and the metric topology on $\widehat{\mathbb{C}^{\mathcal{D}_0}}$ are the
same because they are the same on $^\rho\mathbb{C}$ for any choice of $^*\mathbb{C}$ and $\rho$
(Todorov \& Wolf~\cite{TodWolf}).
\end{proof} 

	  In what follows we assume a particular case of the generalized continuum hypothesis  in the form
$\mathfrak{c}^+=2^\mathfrak{c}$.

\begin{corollary}\label{C: A Generalization}  Let $^*\mathcal{R}$ be a non-standard extension of
$\mathbb{R}$ in a $\mathfrak{c}^+$-saturated non-standard model with set of individuals $\mathbb{R}$ such
that $\card(^*\mathcal{R})=\mathfrak{c}^+$. Let $\varepsilon$ be a positive infinitesimal in
$^*\mathcal{R}$ and let
${^\varepsilon\mathcal{C}}$ and
${^\varepsilon\mathcal{R}}$ be the corresponding Robinson's fields (see above).  Then 
${^\varepsilon\mathcal{C}}$ and  ${^\varepsilon\mathcal{R}}$ are isomorphic to
$\widehat{\mathbb{C}^{\mathcal{D}_0}}$ and $\widehat{\mathbb{R}^{\mathcal{D}_0}}$, respectively.
\end{corollary}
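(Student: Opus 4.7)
The plan is to reduce the statement to Theorem~\ref{T: Isomorphic Fields} together with a uniqueness result for Robinson fields built over sufficiently saturated non-standard extensions. First I would apply Theorem~\ref{T: Isomorphic Fields} to the distributional non-standard model of Section~\ref{S: Distributional Non-Standard Model} with its canonical infinitesimal $\rho=\langle R_\varphi\rangle$, obtaining the isomorphisms $\widehat{\mathbb{C}^{\mathcal{D}_0}}\cong{^\rho\mathbb{C}}$ and $\widehat{\mathbb{R}^{\mathcal{D}_0}}\cong{^\rho\mathbb{R}}$. Since by Theorem~\ref{T: Saturation Principle} the distributional model is itself $\mathfrak{c}^+$-saturated with $\card({^*\mathbb{R}})=\mathfrak{c}^+$, it thus suffices to prove the following uniqueness statement: any two Robinson fields $^\rho\mathbb{C}$ and $^\varepsilon\mathcal{C}$ built from $\mathfrak{c}^+$-saturated non-standard extensions of $\mathbb{R}$ of cardinality $\mathfrak{c}^+$, with arbitrary positive infinitesimals $\rho,\varepsilon$, are isomorphic as valued fields (and similarly for the real parts).

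For the uniqueness step, the natural approach is model-theoretic: $^\varepsilon\mathcal{R}$ is a real closed field by Theorem~\ref{T: Field Properties}, and by Tarski's theorem the theory of real closed fields is complete, so by the classical result on uniqueness of saturated models of the same complete theory of the same cardinality, it is enough to verify the two assumptions (a) $\card({^\varepsilon\mathcal{R}})=\mathfrak{c}^+$ and (b) $^\varepsilon\mathcal{R}$ is $\mathfrak{c}^+$-saturated as an ordered field. The cardinality upper bound $\leq\mathfrak{c}^+$ is immediate from $\card({^*\mathcal{R}})=\mathfrak{c}^+$; the lower bound follows because, under $\mathfrak{c}^+=2^\mathfrak{c}$, one can exhibit $\mathfrak{c}^+$ pairwise inequivalent elements using the internal overflow/underflow tools of Theorem~\ref{T; Principles of Permanence} (e.g.\ by producing an internal set of size $\mathfrak{c}^+$ of $\varepsilon$-moderate numbers whose pairwise differences are not $\varepsilon$-negligible). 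For saturation, given a type $\Sigma(x)$ over a parameter set $P\subset{^\varepsilon\mathcal{R}}$ of size $\leq\mathfrak{c}$, I would lift each condition of $\Sigma$ to an internal condition on representatives in $\mathcal{M}_\varepsilon({^*\mathcal{R}})$ (using the principles of permanence in Theorem~\ref{T; Principles of Permanence} to convert the ``$\approx 0$''-style conditions into internal $\varepsilon$-polynomial inequalities), use $\mathfrak{c}^+$-saturation of $^*\mathcal{R}$ to realize the resulting internal type, and then project the realization down to $^\varepsilon\mathcal{R}$.

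The hardest point will be verifying clause (b), namely that the quotient $^\varepsilon\mathcal{R}=\mathcal{M}_\varepsilon({^*\mathcal{R}})/\mathcal{N}_\varepsilon({^*\mathcal{R}})$ actually inherits $\mathfrak{c}^+$-saturation as an ordered field from the $\mathfrak{c}^+$-saturation of $^*\mathcal{R}$: the subtlety is that formulas over $^\varepsilon\mathcal{R}$ are not themselves internal, so the lifting step has to be done type-by-type using the characterization of the order on $^\varepsilon\mathcal{R}$ in terms of non-$\varepsilon$-negligible positivity, and then Theorem~\ref{T; Principles of Permanence} must be invoked to turn each ``eventually non-infinitesimally positive'' statement into an internal condition that can be handled by the ambient saturation. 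Once uniqueness is in place, combining it with the distributional-model instance supplied by Theorem~\ref{T: Isomorphic Fields} delivers $^\varepsilon\mathcal{C}\cong{^\rho\mathbb{C}}\cong\widehat{\mathbb{C}^{\mathcal{D}_0}}$ and $^\varepsilon\mathcal{R}\cong{^\rho\mathbb{R}}\cong\widehat{\mathbb{R}^{\mathcal{D}_0}}$, completing the proof. (As an alternative route, one may bypass the direct saturation argument by invoking the result of Todorov \& Wolf~\cite{TodWolf} that every such Robinson field is isomorphic to a specific Hahn field of generalized power series whose value group and residue field are determined by the ambient saturation and cardinality; uniqueness of that Hahn field then yields the corollary.)
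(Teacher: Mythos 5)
Your overall reduction --- Theorem~\ref{T: Isomorphic Fields} plus a uniqueness statement for Robinson fields coming from fully saturated models of cardinality $\mathfrak{c}^+$ --- is the same as the paper's, and your parenthetical ``alternative route'' is in fact exactly the paper's proof: the paper observes that $^*\mathbb{R}$ is fully saturated (Theorem~\ref{T: Saturation Principle}) and that $^*\mathcal{R}$ is fully saturated by hypothesis, quotes Todorov \& Wolf~\cite{TodWolf}, p.~370, for ${^\rho\mathbb{R}}\cong{^\varepsilon\mathcal{R}}$, and then concludes by Theorem~\ref{T: Isomorphic Fields}.

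Your primary route for the uniqueness step, however, rests on a false key claim. The quotient ${^\varepsilon\mathcal{R}}=\mathcal{M}_\varepsilon({^*\mathcal{R}})/\mathcal{N}_\varepsilon({^*\mathcal{R}})$ is \emph{not} $\mathfrak{c}^+$-saturated as an ordered field; it is not even $\aleph_1$-saturated. Every $\varepsilon$-moderate number is bounded by $\varepsilon^{-m}$ for some $m\in\mathbb{N}$, so the countable family $\widehat{\varepsilon^{-m}}$, $m\in\mathbb{N}$, is cofinal in ${^\varepsilon\mathcal{R}}$, and hence the finitely satisfiable type $\{x>\widehat{\varepsilon^{-m}} : m\in\mathbb{N}\}$ over countably many parameters is omitted (dually, the $\widehat{\varepsilon^{m}}$ are coinitial in the positive part, since a nonzero class has a representative with $|\xi|\geq\varepsilon^{m}$ for some $m$). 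Saturation of the ambient $^*\mathcal{R}$ does not descend to the quotient precisely because $\mathcal{M}_\varepsilon$ and $\mathcal{N}_\varepsilon$ are external (a countable union, respectively intersection, of internal sets), which is why the lifting step you yourself single out as the hardest point cannot be carried out; no use of the permanence principles of Theorem~\ref{T; Principles of Permanence} can repair it, because the statement they are meant to establish is false. Consequently the uniqueness theorem for saturated models of the complete theory of real closed fields simply does not apply to ${^\varepsilon\mathcal{R}}$ and ${^\rho\mathbb{R}}$ (these fields are spherically complete, not saturated), and the uniqueness must be obtained as in \cite{TodWolf}, where full saturation of the ambient model together with $\card({^*\mathcal{R}})=\mathfrak{c}^+=2^{\mathfrak{c}}$ is used to identify each Robinson field with one and the same Hahn field. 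In short: keep your first step and your parenthetical alternative, and discard the saturated-models argument.
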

\begin{proof}   Let $^*\mathbb{R}$ be the non-standard extension of $\mathbb{R}$ in our distributional
non-standard model and let
$\rho=\left<R_\varphi\right>$ (\#\ref{No: Canonical Infinitesimal} in Definition~\ref{D: Distributional Non-Standard
Model}). We observe that 
$^*\mathbb{R}$ is fully saturated by  (Theorem~\ref{T: Saturation Principle}) and 
$^*\mathcal{R}$ is fully saturated by assumption.  Thus $^{\rho}\mathbb{R}$ and 
$^{\varepsilon}\mathcal{R}$ are isomorphic by (Todorov \& Wolf~\cite{TodWolf},
p.370). It follows that ${^\varepsilon\mathcal{R}}$ and
$\widehat{\mathbb{R}^{\mathcal{D}_0}}$ are isomorphic (as required) since  ${^\rho\mathbb{R}}$ and
$\widehat{\mathbb{R}^{\mathcal{D}_0}}$ are isomorphic by Theorem~\ref{T: Isomorphic
Fields}.
\end{proof} 

	 The sets of the form $B=\{z\in{^\rho\mathbb{C}}: |z-a|_v\leq b\}$, where 
$a\in{^\rho\mathbb{C}}$ and $b\in{\mathbb{R}_+}$, are called {\bf closed balls} in
$^\rho\mathbb{C}$. Similarly, if
$a\in{^\rho\mathbb{R}}$ and $b\in\mathbb{R}_+$, then the sets $B=\{z\in{^\rho\mathbb{R}}:
|z-a|_v\leq b\}$ are closed balls in ${^\rho\mathbb{R}}$. The next result is due to W.A.J.
Luxemburg~(\cite{wLux}, p.195). 
\begin{theorem}[Luxemburg] \label{T: Luxemburg} The field $^\rho\mathbb{R}$ is
\textbf{spherically complete}  in the sense that every family of closed balls in $^\rho\mathbb{R}$ with the
finite intersection property (f.i.p.) has non-empty intersection. Consequently,  the field
$^\rho\mathbb{C}$ is also {\em spherically complete}. 
\end{theorem}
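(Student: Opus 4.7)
I would prove the real case by non-standard methods using countable saturation and then deduce the complex case coordinate-wise. First, because $d_v$ is an ultra-metric (Theorem~\ref{T: Ultra-Properties}(iii)), any two closed balls with non-empty intersection are comparable by inclusion, so a f.i.p.\ family $\{B_\gamma\}_{\gamma\in\Gamma}$ is totally ordered by reverse inclusion. Let $r_\ast=\inf_\gamma b_\gamma\ge 0$. If this infimum is attained by some $B_{\gamma_0}$, then the center $a_{\gamma_0}$ lies in every $B_\gamma$ and we are done. Otherwise I extract a strictly decreasing sequence $b_n\searrow r_\ast$; the nested sequence $B_n=B(a_n,b_n)$ is cofinal in the original family, so it suffices to produce a point in $\bigcap_n B_n$.

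Next I lift into the non-standard model. Write $a_n=\widehat{\alpha_n}$ with $\alpha_n\in\mathcal{M}_\rho({^*\mathbb{R}})$ and $r_n=-\ln b_n\in\mathbb{R}$. Using $v(\widehat{\zeta})=\st(\ln|\zeta|/\ln\rho)$ together with the fact that $\ln\rho$ is negatively infinite, one checks
\[
\widehat{\zeta}\in B_n\quad\Longleftrightarrow\quad |\zeta-\alpha_n|\le\rho^q\text{ for every rational }q<r_n.
\]
For each $n\in\mathbb{N}$ and each rational $q<r_n$ the set $C_{n,q}=\{\zeta\in{^*\mathbb{R}}:|\zeta-\alpha_n|\le\rho^q\}$ is internal by the internal definition principle (Theorem~\ref{T: Internal Definition Principle}). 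The countable family $\{C_{n,q}\}$ has the f.i.p.: given finitely many indices $(n_i,q_i)$, set $N=\max_i n_i$; since the balls are nested, $a_N\in B_{n_i}$ for every $i$, which translates via the characterization above to $\alpha_N\in C_{n_i,q_i}$. Countable saturation (a consequence of the $\mathfrak{c}^+$-saturation of Theorem~\ref{T: Saturation Principle}) then furnishes some $\zeta\in\bigcap_{n,q}C_{n,q}$. Since $|\zeta-\alpha_1|\le\rho^{q_1}$ and $\alpha_1$ is $\rho$-moderate, $\zeta$ is $\rho$-moderate as well, so $\widehat{\zeta}\in{^\rho\mathbb{R}}$ lies in every $B_n$.

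For the complex statement, from $\max(|\xi|,|\eta|)\le|\xi+i\eta|\le\sqrt{2}\,\max(|\xi|,|\eta|)$ in ${^*\mathbb{C}}$ and the fact that $\ln\sqrt{2}/\ln\rho$ is infinitesimal, a direct computation gives $v(\widehat{\xi+i\eta})=\min(v(\widehat{\xi}),v(\widehat{\eta}))$, so $|a_1+ia_2|_v=\max(|a_1|_v,|a_2|_v)$. Consequently a closed ball in ${^\rho\mathbb{C}}$ is the Cartesian product of two closed balls in ${^\rho\mathbb{R}}$, and the f.i.p.\ is inherited coordinate-wise by the two projected real families; applying the real case to each projection produces a common point in the complex intersection.

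The main obstacle will be the precise passage in the second step between the external condition $|\widehat{\zeta}-a_n|_v\le b_n$ and the internal conditions $|\zeta-\alpha_n|\le\rho^q$: the former involves a standard part and is not an internal predicate on ${^*\mathbb{R}}$, which is why the single external inequality must be replaced by the countable family of its internal rational approximations before saturation can be invoked.
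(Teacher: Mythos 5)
Your argument is correct, and there is in fact nothing in the paper to compare it against: Theorem~\ref{T: Luxemburg} is quoted from Luxemburg~\cite{wLux}, p.~195, without proof, so your write-up supplies the missing argument. It is essentially the standard (Luxemburg-style) proof: because the radii are ordinary positive reals and $d_v$ is an ultra-metric, a family of closed balls with the f.i.p.\ is a chain; either the infimum of the radii is attained (and that center lies in every ball) or one passes to a countable nested cofinal subchain; the external condition $|\widehat{\zeta}-a_n|_v\le b_n$ is correctly translated into the countable family of internal conditions $|\zeta-\alpha_n|\le\rho^{q}$, $q\in\mathbb{Q}$, $q<r_n$ (your equivalence is right, using $v(\widehat{\zeta})=\st(\ln|\zeta|/\ln\rho)$ and the fact that $\ln\rho$ is negatively infinite), saturation yields a common representative $\zeta$, whose moderateness follows as you say, and the complex case reduces coordinate-wise via $|a_1+ia_2|_v=\max(|a_1|_v,|a_2|_v)$. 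Two small points you should make explicit. First, what you actually use is only countable ($\aleph_1$-) saturation; this is available in the distributional model (Theorem~\ref{T: Saturation Principle}) and in any ultrapower model, but since Definition~\ref{D: A. Robinson's rho-Asymptotic Numbers} permits an arbitrary $\kappa$-saturated model you should state that at least $\aleph_1$-saturation is assumed --- this is precisely Luxemburg's hypothesis. Second, the ultra-metric inequality you cite from Theorem~\ref{T: Ultra-Properties} is stated there for $\widehat{\mathbb{C}^{\mathcal{D}_0}}$; for a general ${^\rho\mathbb{C}}$ it follows directly from the formula for $v$ in Definition~\ref{D: A. Robinson's rho-Asymptotic Numbers} (or via the isomorphism of Theorem~\ref{T: Isomorphic Fields} in the distributional case), so cite that instead.
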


	 We recall the definition of the algebra $^\rho\mathcal{E}(\Omega)$ (Oberguggenberger \&
Todorov\cite{OberTod98}).

\begin{definition}[$\rho$-Asymptotic Functions]\label{D: rho-Asymptotic Functions}{\em  Let
${^*\mathbb{R}}$ and ${^*\mathcal{E}(\Omega)}$ be the non-standard extensions of ${\mathbb{R}}$ and
$\mathcal{E}(\Omega)=:\mathcal{C}^\infty(\Omega)$, respectively, in an arbitrary
$\kappa$-saturated non-standard model with set of individuals $\mathbb{R}$, where $\kappa$ is an infinite
cardinal. (In particular, this could be the distributional non-standard model constructed in Section~\ref{S:
Distributional Non-Standard Model}). Let $\rho$ be positive infinitesimal in ${^*\mathbb{R}}$. 
Following (Oberguggenberger {\em \&} Todorov\cite{OberTod98}), we define:
\begin{enumerate}
\item  The sets of the \textbf{$\rho$-moderate} and
\textbf{$\rho$-negligible} functions in ${^*\mathcal{E}(\Omega)}$ are
\begin{align}
&\mathcal{M}_\rho(^*\mathcal{E}(\Omega))=\left\{f\in{^*\mathcal{E}(\Omega)}
: (\forall\alpha\in\mathbb{N}_0^d)(\forall
x\in\mu(\Omega))\left(\partial^\alpha
f(x)\in\mathcal{M}_\rho(^*\mathbb{C})\right)\right\},\notag\\
&\mathcal{N}_\rho(^*\mathcal{E}(\Omega))=\left\{f\in{^*\mathcal{E}(\Omega)}
: (\forall\alpha\in\mathbb{N}_0^d)(\forall
x\in\mu(\Omega))\left(\partial^\alpha
f(x)\in\mathcal{N}_\rho(^*\mathbb{C})\right)\right\},\notag
\end{align}
respectively, where $\mu(\Omega)$ is the monad of $\Omega$ (\#\ref{No: Monad} in Definition~\ref{D:
Distributional Non-Standard Model}). The differential algebra of
\textbf{$\rho$-asymptotic functions} on $\Omega$ is the factor ring
$^\rho\!\mathcal{E}(\Omega)=:
\mathcal{M}_\rho(^*\mathcal{E}(\Omega))/\mathcal{N}_\rho(^*\mathcal{E}(\Omega))$. We denote by
$\widehat{f}$ the equivalence class of
$f\in{\mathcal{M}_\rho(^*\mathcal{E}(\Omega))}$.  
\item For any
$\mathcal{S}\subseteq \,\,^{*}\mathcal{E}(\Omega)$ we let
$\widehat{\mathcal{S}} = \{\widehat{f} : f\in \mathcal{S}\cap\mathcal{M}_\rho(^*\mathcal{E}(\Omega))\}$.
If $S \subseteq \mathcal{E}(\Omega)$, 
the set $^{\rho}S=\widehat{^*S}$ is called the $\rho$-extension of $S$. The algebra  
$^\rho\mathcal{E}(\Omega)$ consists of particular pointwise
functions from $\widehat{\mu(\Omega)}$ into $^\rho\mathbb{C}$ (Todorov~\cite{tTod99}).
\end{enumerate}
}\end{definition}

	 The next result appears in (Oberguggenberger \& Todorov\cite{OberTod98}).
\begin{theorem}[Existence of Embedding]\label{T: Existence of Embedding} There exists an 
embedding $\Sigma_{D,\Omega}: \mathcal{D}^\prime(\Omega)\to{^\rho\!\mathcal{E}(\Omega)}$ of
Colombeau type, where $D\in{^*\mathcal{E}(\mathbb{R}}^d)$ stands for a particular 
non-standard delta-function (non-standard mollifier). Thus ${^\rho\!\mathcal{E}(\Omega)}$ are \textbf{special
algebras of generalized functions} of Colombeau's type (see the Introduction). 
\end{theorem}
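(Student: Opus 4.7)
The plan is to carry the construction of the full embedding $E_\Omega$ from Section~\ref{S: A Solution to the Problem of Multiplication of Schwartz Distributions} over to the non-standard setting of Section~\ref{S: Distributional Non-Standard Model}. Instead of a whole $\mathcal{D}_0$-net of regularizing mollifiers, one picks a single internal $D \in {^*\mathcal{D}(\mathbb{R}^d)}$ that has \emph{simultaneously} every good property enjoyed by members of $\mathcal{D}_n$ for large $n$. Existence of such a $D$ is supplied by $\mathfrak{c}^+$-saturation, and then $\Sigma_{D,\Omega}$ is a direct internal transcription of the formula $T \mapsto T \circledast \varphi$.

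To construct $D$, note that Theorem~\ref{T: Directing Sequence of D(Rd)} gives a decreasing sequence $\mathcal{D}_1 \supseteq \mathcal{D}_2 \supseteq \cdots$ of nonempty subsets of $\mathcal{D}(\mathbb{R}^d)$. The transferred sequence ${^*\mathcal{D}_1} \supseteq {^*\mathcal{D}_2} \supseteq \cdots$ is a countable family of internal sets with the finite intersection property, so by Theorem~\ref{T: Saturation Principle} I may fix
\[
D \in \bigcap_{n \in \mathbb{N}} {^*\mathcal{D}_n}.
\]
Transferring Definition~\ref{D: Directing Sets}, $D$ is internal and $\mathcal{C}^\infty$, real-valued, even, with internal integral $1$, with $\int x^\alpha D(x)\,dx = 0$ for every standard multi-index $\alpha$ satisfying $1 \le |\alpha|$, with infinitesimal radius of support $R_D$ satisfying $R_D \le 1/n$ for every standard $n$, and with $\sup|\partial^\alpha D| \le R_D^{-2(|\alpha|+d)}$ for every standard $\alpha$. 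I then specialise the parameter of Definition~\ref{D: rho-Asymptotic Functions} by taking $\rho := R_D$, so that the target algebra $^\rho\mathcal{E}(\Omega)$ is adapted to the chosen mollifier.

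Next I define, for $T \in \mathcal{D}^\prime(\Omega)$ and $x \in \mu(\Omega)$,
\[
\Sigma_{D,\Omega}(T)(x) \;=\; \bigl({^*T}(t) \,\bigm|\, C_{{^*\Omega},D}(t)\,D(x-t)\bigr),
\]
where the pairing is the internal one on ${^*\Omega}$ and $C_{{^*\Omega},D}$ is the internal version of the cut-off in \#\ref{No: Cut-Off} of Examples~\ref{Exs: Nets and Distributions}; this is simply the internal form of the $\varphi$-regularization $T \circledast \varphi$ of \#\ref{No: phi-regularization}. The $\rho$-moderateness of $\Sigma_{D,\Omega}(T)$ is exactly Lemma~\ref{L: Schwartz Distributions} internalised: for each standard $K \Subset \Omega$ and standard $\alpha$ there exist standard $m,n$ such that, since $D \in {^*\mathcal{D}_n}$,
\[
\sup_{x \in {^*K}} \bigl|\partial^\alpha({^*T} \circledast D)(x)\bigr| \le R_D^{-m} = \rho^{-m},
\]
so $\partial^\alpha\Sigma_{D,\Omega}(T)(x) \in \mathcal{M}_\rho({^*\mathbb{C}})$ for all $x \in \mu(K)$.

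It remains to verify the Colombeau-type axioms (1)--(7) of the Introduction, each of which is an internal restatement of the corresponding a.e.\ estimate from Section~\ref{S: D0-Nets and Schwartz Distributions}. Linearity is immediate from linearity of the convolution; preservation of partial derivatives and of restrictions to open subsets follow by transfer of Lemma~\ref{L: Localization}; preservation of the distributional pairing (and hence injectivity) is the transfer of Lemma~\ref{L: Pairing}, giving $|({^*T} \circledast D \mid {^*\tau}) - (T \mid \tau)| \le R_D^p$ for every standard $p$, a $\rho$-negligible quantity; preservation of the ring structure on $\mathcal{C}^\infty$ and the weak association for products of continuous functions come from the transfer of Lemma~\ref{L: Cinfinity-Functions}. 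The one genuinely nontrivial step is producing a \emph{single} internal mollifier $D$ simultaneously enjoying all the required smoothness, moment-cancellation and growth properties; this is the main obstacle and is dispatched at a stroke by $\mathfrak{c}^+$-saturation applied to the directing sequence $(\mathcal{D}_n)$, after which every remaining verification reduces, via transfer, to a standard-analysis lemma already proved.
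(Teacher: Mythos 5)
The paper itself gives no proof of this theorem: it is quoted from Oberguggenberger--Todorov \cite{OberTod98}, and Remark~\ref{R: Non-Canonical Embedding} only records that the mollifier $D$ is obtained there by the saturation principle. Your proposal reconstructs exactly that mechanism (saturation yields one internal mollifier in $\bigcap_n{^*\mathcal{D}_n}$; all estimates then follow by transfer of Lemmas~\ref{L: Localization}--\ref{L: Pairing}), so in spirit you are on the cited route, and the transfer/saturation bookkeeping you describe is sound.

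There is, however, one genuine gap relative to the statement being proved. In Definition~\ref{D: rho-Asymptotic Functions} the infinitesimal $\rho$ is prescribed \emph{in advance} (an arbitrary positive infinitesimal of the given model), and the theorem asserts an embedding into ${^\rho\mathcal{E}(\Omega)}$ for that $\rho$; in Section~\ref{S: J.F. Colombeau's Non-Linear Theory of Generalized Functions and Non-Standard Analysis} the result is wanted specifically for the canonical $\rho=\langle R_\varphi\rangle$. You instead pick $D\in\bigcap_n{^*\mathcal{D}_n}$ and then \emph{redefine} $\rho:=R_D$, which proves the statement only for a $\rho$ of your own manufacture. For a pregiven $\rho$ an unconstrained $D$ need not work: if $R_D\leq\rho^{\,n}$ for every standard $n$, the bound $\sup_{x\in{^*K}}|\partial^\alpha({^*T}\circledast D)(x)|\leq R_D^{-m}$ from Lemma~\ref{L: Schwartz Distributions} is not $\rho$-moderate, while if $R_D^{\,n}\geq\rho$ for every standard $n$, the error bounds $R_D^{\,p}$ from Lemmas~\ref{L: Cinfinity-Functions} and \ref{L: Pairing} are not $\rho$-negligible, so moderateness, the identity $\Sigma_{D,\Omega}\circ L_\Omega=\sigma_\Omega$ on $\mathcal{E}(\Omega)$, and pairing preservation (hence injectivity) all fail to follow. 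The repair is to build the given $\rho$ into the saturation family, e.g.\ intersect the internal sets $\{\psi\in{^*\mathcal{D}_n}: R_\psi=\rho\}$; each is nonempty because rescaling $\varphi_\lambda(x)=\lambda^{-d}\varphi(x/\lambda)$ with $0<\lambda\leq1$ keeps an element of $\mathcal{D}_n$ inside $\mathcal{D}_n$ while realizing every smaller radius of support, so the standard statement transfers and can be evaluated at the (infinitesimal) radius $\rho$. With $R_D=\rho$ the rest of your argument goes through as written. A minor further point: for the weak preservation of products of continuous functions you cite Lemma~\ref{L: Cinfinity-Functions}, which concerns $\mathcal{C}^\infty$ functions; for merely continuous $g$ one needs the elementary uniform-continuity estimate used in Theorem~\ref{T: Weak Preservation}(ii), again in transferred form.
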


\begin{remark} [Non-Canonical Embedding]\label{R: Non-Canonical Embedding} We should notice that
the embedding $\Sigma_{D,\Omega}$ is {\em non-canonical} because the existence of $D$ is proved in
\cite{OberTod98}  by saturation principle and thus $D$ cannot be
defined uniquely in the terms already used in the definition of ${^\rho\!\mathcal{E}(\Omega)}$. Actually,
$D$  cannot be determined uniquely by any properties expressed in the language of standard or
non-standard analysis; $D$ is chosen and fixed in \cite{OberTod98}  ``by hand''.
\end{remark}
	 The next two simple lemmas provide examples of the ability of non-standard analysis to reduce the
number of quantifiers.

\begin{lemma} Let $f\in{^*\mathcal{E}(\Omega)}$. Then the following are equivalent:
\begin{description}
\item[(a)]  $(\forall K\Subset\Omega)(\exists
m\in\mathbb{N})(\sup_{\xi\in{^*K}}|f(\xi)|\leq\rho^{-m})$.
\item[(b)] $(\forall \xi\in\mu(\Omega))(f(\xi)\in\mathcal{M}_\rho(^*\mathbb{C}))$.
\end{description}
\end{lemma}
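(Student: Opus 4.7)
The plan is to establish each implication separately, with the main work being a clever use of compactness plus the monad characterization of compact sets from Theorem~\ref{T: Usual Topology on Rd and Monads}(f).

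For (a) $\Rightarrow$ (b), I would take an arbitrary $\xi \in \mu(\Omega)$, write $\xi = x + dx$ with $x \in \Omega$ and $dx \approx 0$, and then pick a compact neighborhood $K \Subset \Omega$ whose interior contains $x$. Because the interior is open, Theorem~\ref{T: Usual Topology on Rd and Monads}(a) gives $\mu(x) \subseteq {^*(\mathrm{int}(K))} \subseteq {^*K}$, so $\xi \in {^*K}$. Applying (a) to this $K$ yields a standard $m$ with $|f(\xi)| \leq \rho^{-m}$, which is exactly $f(\xi) \in \mathcal{M}_\rho(^*\mathbb{C})$.

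For (b) $\Rightarrow$ (a), the key move is to replace the quantifier over $\xi$ by a single attained maximum. Fix $K \Subset \Omega$. Since $f$ is an internal $^*\mathcal{C}^\infty$ function and ${^*K}$ is internal, transfer of the standard statement that a continuous function attains its supremum on a compact set produces a point $\xi_0 \in {^*K}$ where $|f|$ achieves its maximum. Now I invoke Theorem~\ref{T: Usual Topology on Rd and Monads}(f): since $K$ is compact, ${^*K} \subseteq \mu(K) \subseteq \mu(\Omega)$, so in particular $\xi_0 \in \mu(\Omega)$. Assumption (b) then gives $f(\xi_0) \in \mathcal{M}_\rho(^*\mathbb{C})$, i.e.\ some standard $m \in \mathbb{N}$ with $|f(\xi_0)| \leq \rho^{-m}$, and by the extremality of $\xi_0$ this same $m$ works uniformly on ${^*K}$.

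The only step that requires any care is the appeal to transfer for the existence of $\xi_0$; one has to be sure the statement ``every $\mathcal{C}^\infty$ function attains $\sup|f|$ on a compact set'' is formulated as a proposition of $\mathcal{L}\mathcal{A}\mathcal{N}(\mathbb{R})$ with all quantifiers bounded by elements of $V(\mathbb{R})\setminus\mathbb{R}$, and then applied with $f$ and $K$ replaced by their internal counterparts. An alternative route, avoiding transfer, is to apply the internal definition principle to the set $\mathcal{A} = \{n \in {^*\mathbb{N}} : (\forall \xi \in {^*K})(|f(\xi)| \leq \rho^{-n})\}$, observe by (b) and ${^*K}\subseteq\mu(\Omega)$ that $\mathcal{A}$ contains every sufficiently large infinite natural number, and then apply the underflow principle (Theorem~\ref{T: Spilling Principles}(iv)) to conclude $\mathcal{A}$ contains a finite $m$. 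Either route completes the lemma; the transfer-based argument is shorter, so I would present that one.
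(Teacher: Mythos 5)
Your proof is correct. The direction (a) $\Rightarrow$ (b) is essentially the paper's own argument: both place the given $\xi\in\mu(\Omega)$ inside ${^*K}$ for a suitable compact neighbourhood $K$ of $\st(\xi)$ using the monad characterization of open sets, and then quote (a). For (b) $\Rightarrow$ (a), however, you take a genuinely different route. The paper argues by contradiction: it forms the internal set $\{m\in{^*\mathbb{N}} : \sup_{\xi\in{^*K}}|f(\xi)|>\rho^{-m}\}$ via the internal definition principle, notes it contains $\mathbb{N}$, overflows to an infinitely large $\nu$, and only then uses transfer to produce a point $\xi_0\in{^*K}$ with $|f(\xi_0)|>\rho^{-\nu}$, contradicting (b) since ${^*K}\subseteq\mu(\Omega)$. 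You instead argue directly: transfer the extreme value theorem once to obtain an internal maximizer $\xi_0\in{^*K}$ of $|f|$, locate $\xi_0$ in $\mu(\Omega)$ via Theorem~\ref{T: Usual Topology on Rd and Monads}(f), and read off a single standard exponent $m$ from (b). This avoids both the contradiction and the spilling principle, at the price of formalizing the extreme value statement with all quantifiers bounded (over $\mathcal{E}(\Omega)$ and the set of compact subsets of $\Omega$), a point you rightly flag and which is routine. Your fallback route is essentially the dual of the paper's proof (underflow applied to the complementary internal set instead of overflow plus transfer), and it also works; just note that what (b) actually yields, and what underflow requires, is stronger than your phrasing ``every sufficiently large infinite natural number'': since each value $f(\xi)$, $\xi\in{^*K}$, lies in $\mathcal{M}_\rho(^*\mathbb{C})$, one has $|f(\xi)|<\rho^{-\nu}$ for \emph{every} infinitely large $\nu\in{^*\mathbb{N}}$, so your set $\mathcal{A}$ contains all infinite hypernaturals and hence arbitrarily small infinitely large elements, which is exactly the hypothesis of Theorem~\ref{T: Spilling Principles}(iv).
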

\begin{proof}  (a) $\Rightarrow$ (b): Suppose that $\xi\in\mu(\Omega)$ and let $\st(\xi)=s$. Since
$s\in\Omega$ and
$\Omega$ is open, there exists an open relatively compact subset $\mathcal{O}$ of $\Omega$ which contains 
$s$ and such that $\overline{\mathcal{O}}\subset\Omega$. So, we have $\xi\in{^*K}$, where
$K=\overline{\mathcal{O}}$. Thus
$\sup_{\eta\in{^*K}}|f(\eta)|\leq\rho^{-m}$ for some $m\in\mathbb{N}$ (by assumption) implying
$f(\xi)\in\mathcal{M}_\rho(^*\mathbb{C})$ as required. 

	 (a) $\Leftarrow$ (b):  Let $K$ be a compact subset of $\Omega$ and suppose (on the
contrary) that 
 $(\forall m\in\mathbb{N})(\sup_{\xi\in{^*K}}|f(\xi)|>\rho^{-m})$. Next, we observe that the set
$\mathcal{A}=: \{m\in{^*\mathbb{N}} : \sup_{\xi\in{^*K}}|f(\xi)|>\rho^{-m}\}$ is internal by internal definition principle (Theorem~\ref{T: Internal Definition Principle}). Also, $\mathcal{A}$ 
contains $\mathbb{N}$ by assumption and thus $\mathcal{A}$ contains an infinitely large number $\nu\in{^*\mathbb{N}}$ by overflow of
$\mathcal{F}(^*\mathbb{R})$ (cf. Theorem~\ref{T: Spilling Principles} in this article or Capi\'{n}ski \& Cutland~\cite{CapinskiCutland95}, p.24). Thus we have
$\sup_{\xi\in{^*K}}|f(\xi)|>\rho^{-\nu}$. On the other hand, we have
$|f(\xi_0)|>\rho^{-\nu}$ for some $\xi_0\in{^*K}$ by transfer principle (cf. Theorem~\ref{T: Transfer Principle} in this article or Davis~\cite{mDavis}),
 contradicting (a), since ${^*K}\subset\mu(\Omega)$ by Theorem~\ref{T: Usual Topology on Rd and Monads}.
\end{proof}
\begin{lemma} Let $f\in{^*\mathcal{E}(\Omega)}$. Then the following are equivalent:
\begin{description}
\item[(a)]  $(\forall K\Subset\Omega)(\forall
n\in\mathbb{N})(\sup_{\xi\in{^*K}}|f(\xi)|<\rho^{n})$.
\item[(b)] $(\forall \xi\in\mu(\Omega))(f(\xi)\in\mathcal{N}_\rho(^*\mathbb{C}))$.
\end{description}
\end{lemma}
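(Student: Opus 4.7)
The plan is to mimic the structure of the immediately preceding lemma, adapting it from moderateness $\mathcal{M}_\rho$ to negligibility $\mathcal{N}_\rho$.

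For the direction (a) $\Rightarrow$ (b), I would start with an arbitrary $\xi \in \mu(\Omega)$ and set $s = \st(\xi) \in \Omega$. Since $\Omega$ is open I can pick an open, relatively compact neighborhood $\mathcal{O}$ of $s$ with $K \eqdef \overline{\mathcal{O}} \subset \Omega$. Then $\xi \in \mu(\mathcal{O}) \subseteq {}^*\mathcal{O} \subseteq {}^*K$ (cf.\ Theorem~\ref{T: Usual Topology on Rd and Monads}). Hypothesis (a) applied to this $K$ yields $|f(\xi)| \leq \sup_{\eta \in {}^*K}|f(\eta)| < \rho^n$ for every $n \in \mathbb{N}$, hence $f(\xi) \in \mathcal{N}_\rho({}^*\mathbb{C})$.

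For (b) $\Rightarrow$ (a), I would argue by contradiction, paralleling the earlier lemma. Assume there exist $K \Subset \Omega$ and $n \in \mathbb{N}$ such that $\sup_{\xi\in{}^*K}|f(\xi)| \geq \rho^n$. Since $|f|$ is an internal ${}^*$-continuous function on the ${}^*$-compact set ${}^*K$, the extreme value theorem transferred via Theorem~\ref{T: Transfer Principle} produces some $\xi_0 \in {}^*K$ at which the supremum is attained, so $|f(\xi_0)| \geq \rho^n$. But Theorem~\ref{T: Usual Topology on Rd and Monads}(f) gives ${}^*K \subseteq \mu(K) \subseteq \mu(\Omega)$, so (b) forces $f(\xi_0) \in \mathcal{N}_\rho({}^*\mathbb{C})$, in particular $|f(\xi_0)| < \rho^{n+1}$. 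Since $0 < \rho < 1$ in ${}^*\mathbb{R}$, we have $\rho^{n+1} < \rho^n$, yielding the contradiction $\rho^n \leq |f(\xi_0)| < \rho^{n+1} < \rho^n$.

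Alternatively, in the (b) $\Rightarrow$ (a) step one can avoid appealing to the attained supremum by using a spilling argument: consider the internal set $\mathcal{A} = \{m \in {}^*\mathbb{N} : \sup_{\xi\in{}^*K}|f(\xi)| \leq \rho^m\}$, which is internal by the internal definition principle (Theorem~\ref{T: Internal Definition Principle}). Hypothesis (b), together with the sup being attained by transfer, shows that $\mathcal{A}$ contains every $m \in \mathbb{N}$; then overflow of $\mathcal{F}({}^*\mathbb{R})$ (Theorem~\ref{T: Spilling Principles}) produces an infinite $\nu \in \mathcal{A}$, which yields $\sup_{\xi\in{}^*K}|f(\xi)| \leq \rho^\nu < \rho^n$ for every standard $n$, i.e.\ (a).

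I do not expect any real obstacle: the argument is entirely parallel to the preceding lemma, with the quantifier ``$\exists m$'' on the bound replaced by ``$\forall n$'' and the moderateness condition replaced by its negligibility analogue. The only minor subtlety is justifying that the supremum over ${}^*K$ is an attained internal non-standard real (via transfer of the extreme value theorem), and that ${}^*K \subseteq \mu(\Omega)$ so that hypothesis (b) can be invoked at the witness point.
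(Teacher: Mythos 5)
Your argument is correct and is essentially what the paper intends: its own ``proof'' merely says the argument is very similar to the preceding lemma and leaves it to the reader, and both of your versions (the extreme-value/transfer one and the overflow one) faithfully carry out that adaptation, including the key points that ${^*K}\subseteq\mu(K)\subseteq\mu(\Omega)$ and that the supremum over ${^*K}$ makes sense internally. As a minor simplification, neither attainment of the supremum nor overflow is actually needed in the direction (b) $\Rightarrow$ (a): for fixed $K\Subset\Omega$ and $n\in\mathbb{N}$, hypothesis (b) gives $|f(\xi)|<\rho^{n+1}$ for every $\xi\in{^*K}\subseteq\mu(\Omega)$, whence $\sup_{\xi\in{^*K}}|f(\xi)|\leq\rho^{n+1}<\rho^{n}$.
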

\begin{proof} The proof is very similar to the proof of the above lemma and we leave it to the reader.
\end{proof} 
\begin{theorem}[Isomorphic Algebras]\label{T: Isomorphic Algebras} Let $^*\mathcal{E}(\Omega)$ be the
non-standard extension of $\mathcal{E}(\Omega)$ (\#\ref{No: Extension Mapping},
Definition~\ref{D: Distributional Non-Standard Model}) in the distributional 
ultrapower non-standard  model constructed in Section~\ref{S: Distributional Non-Standard Model}. Let
$\rho=\left<R_\varphi\right>$ be the canonical infinitesimal  in
${^*\mathbb{R}}$ (\#\ref{No: Canonical Infinitesimal} in Definition~\ref{D: Distributional Non-Standard
Model}). Then:
\begin{description}
\item[(i)] If
$(f_\varphi)\in\mathcal{E}(\Omega)^{\mathcal{D}_0}$, then
$(f_\varphi)\in\mathcal{M}(\mathcal{E}(\Omega)^{\mathcal{D}_0})$ (Definition~\ref{D: Asymptotic
Functions}) \ifff 
 $\left<f_\varphi\right>\in\mathcal{M}_\rho(^*\mathcal{E}(\Omega))$.
\item[(ii)] The differential algebras $\widehat{\mathcal{E}(\Omega)^{\mathcal{D}_0}}$ and 
${^\rho\!\mathcal{E}(\Omega)}$ are isomorphic under
the mapping $\widehat{f_\varphi}\to \widehat{\left<f_\varphi\right>}$ from
$\widehat{\mathcal{E}(\Omega)^{\mathcal{D}_0}}$ to ${^\rho\!\mathcal{E}(\Omega)}$.
\end{description}
\end{theorem}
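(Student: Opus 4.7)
The plan is to mirror the proof of Theorem~\ref{T: Isomorphic Fields} closely, using the two preceding lemmas (which replace the pointwise conditions on monads by uniform bounds on $^*K$ for $K\Subset\Omega$) to reduce everything to the directing sets $\mathcal{D}_n$ and the ultrafilter $\mathcal{U}$. First, for part (i), I would translate the condition $\langle f_\varphi\rangle\in\mathcal{M}_\rho(^*\mathcal{E}(\Omega))$ via the first of the two lemmas preceding the theorem into the statement
\[
(\forall K\Subset\Omega)(\forall\alpha\in\mathbb{N}_0^d)(\exists m\in\mathbb{N})\Bigl(\sup_{\xi\in{}^*K}|\partial^\alpha\langle f_\varphi\rangle(\xi)|\leq\rho^{-m}\Bigr).
\]
For fixed $K$ and $\alpha$, the map $\varphi\mapsto B_\varphi:=\sup_{x\in K}|\partial^\alpha f_\varphi(x)|$ is a $\mathcal{D}_0$-net of real numbers, and transfer applied to the formula ``$\sup_{x\in K}|\partial^\alpha f(x)|$'' (a term in the standard language) gives $\sup_{\xi\in{}^*K}|\partial^\alpha\langle f_\varphi\rangle(\xi)|=\langle B_\varphi\rangle$. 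Since $\rho^{-m}=\langle R_\varphi^{-m}\rangle$, the inequality $\langle B_\varphi\rangle\leq\langle R_\varphi^{-m}\rangle$ in ${}^*\mathbb{R}$ is, by the ultrapower definition of the order, exactly $B_\varphi\leq R_\varphi^{-m}$ a.e., which is the definition of moderateness of $(f_\varphi)$. This gives (i).

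For (ii), I would first establish the analogue of (i) for negligible nets: using the second of the two preceding lemmas in the same way, $(f_\varphi)\in\mathcal{N}(\mathcal{E}(\Omega)^{\mathcal{D}_0})$ iff $\langle f_\varphi\rangle\in\mathcal{N}_\rho({}^*\mathcal{E}(\Omega))$. These two characterizations together make the map $\widehat{f_\varphi}\mapsto\widehat{\langle f_\varphi\rangle}$ well defined and injective, exactly as in the proof of Theorem~\ref{T: Isomorphic Fields}. To see it is a homomorphism of differential algebras, note that the ring operations and partial differentiation in both $\mathcal{M}(\mathcal{E}(\Omega)^{\mathcal{D}_0})$ and ${}^*\mathcal{E}(\Omega)$ are inherited pointwise from $\mathcal{E}(\Omega)$ (in the ultrapower construction $\partial^\alpha\langle f_\varphi\rangle=\langle \partial^\alpha f_\varphi\rangle$, $\langle f_\varphi\rangle+\langle g_\varphi\rangle=\langle f_\varphi+g_\varphi\rangle$, etc.), so the map commutes with everything.

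The remaining point, which I expect to be the main obstacle, is surjectivity: given an arbitrary $F\in\mathcal{M}_\rho({}^*\mathcal{E}(\Omega))$, I must produce $(f_\varphi)\in\mathcal{M}(\mathcal{E}(\Omega)^{\mathcal{D}_0})$ with $F=\langle f_\varphi\rangle$. By the construction of the distributional non-standard model (\#\ref{No: Internal} of Definition~\ref{D: Distributional Non-Standard Model}), every internal element of ${}^*V(\mathbb{R})$ arises from a tame net; since $F\in{}^*\mathcal{E}(\Omega)=\langle\mathcal{E}(\Omega)\rangle$, transitivity of the superstructure and the characterization $A_\varphi\in B_\varphi$ a.e.\ give $F=\langle f_\varphi\rangle$ where $f_\varphi\in\mathcal{E}(\Omega)$ for $\varphi$ in some set of $\mathcal{U}$; redefining $f_\varphi$ arbitrarily (say $f_\varphi=0$) off this set does not change the equivalence class and yields a genuine net in $\mathcal{E}(\Omega)^{\mathcal{D}_0}$. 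By part (i) this net is moderate, and its image is $F$. This completes the isomorphism.
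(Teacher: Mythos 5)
Your proposal is correct and follows essentially the same route the paper intends: its proof simply invokes the two preceding lemmas and declares the argument "almost identical" to that of Theorem~\ref{T: Isomorphic Fields}, which is exactly what you carry out (translation of the monad conditions into uniform bounds on $^*K$, reduction to a.e.\ statements via the ultrapower order, and the negligible-net analogue for well-definedness and injectivity). Your explicit treatment of surjectivity via the ultrapower description of $^*\mathcal{E}(\Omega)$ as classes $\left<f_\varphi\right>$ with $f_\varphi\in\mathcal{E}(\Omega)$ a.e.\ just fills in a detail the paper leaves to the reader.
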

\begin{proof}  In view of the previous two lemmas, the proof of this theorem is almost identical to the proof
of Theorem~\ref{T: Isomorphic Fields} and we leave it to the reader. 
\end{proof} 

\section{The Hahn-Banach Extension Principle for Asymptotic Functionals}\label{S: A Hahn-Banach Like
Theorem for Asymptotic Functionals}

 \quad In this section we show that a Hahn-Banach extension principle holds for continuous asymptotic
functionals, i.e. linear continuous functionals defined on vector spaces over the field
$^\rho\mathbb{C}$ taking values also in $^\rho\mathbb{C}$ (Corollary~\ref{C: The Case
K=rhoC}). This result is based on the spherical
completeness of $^\rho\mathbb{C}$ (Luxemburg~\cite{wLux}, p.195 or Theorem~\ref{T:
Luxemburg} in this article) and a result due to (Ingleton~\cite{wIngleton}). Here $^\rho\mathbb{C}$ is
Robinson's field  (Definition~\ref{D: A. Robinson's rho-Asymptotic Numbers}) within an arbitrary non-standard
model with individuals $\mathbb{R}$ and $\rho$ is an arbitrary positive infinitesimal in $^*\mathbb{R}$.  
Consequently, the results in this section hold as well for linear continuos functionals with values in
$\widehat{\mathbb{C}^{\mathcal{D}_0}}$ (Definition~\ref{D: Asymptotic Numbers}) since
$\widehat{\mathbb{C}^{\mathcal{D}_0}}$ is isomorphic to  a field of the form $^\rho\mathbb{C}$
(Theorem~\ref{T: Isomorphic Fields}).

	 The rings of Colombeau generalized numbers $\overline{\mathbb{R}}$ and $\overline{\mathbb{C}}$
(Colombeau~\cite{jfCol84a}, pp.136) are also spherically complete, and a result similar to
Theorem~\ref{T: Hahn-Banach} appears in E. Mayerhofer's thesis~\cite{eMayerhofer}, where
$\mathbb{K}$ (see below) is a field which is a (proper) subring of $\overline{\mathbb{C}}$.  Also E.
Mayerhofer raised the question whether or not it is possible to generalize his result to the whole rings
$\overline{\mathbb{R}}$ and $\overline{\mathbb{C}}$ (cf. Conjecture 3.11
in Mayerhofer~\cite{eMayerhofer}). Later H. Vernaeve~\cite{hVernaeve} proved that a such
generalization is impossible. Thus Corollary~\ref{C: The Case K=rhoC} at the end of this section does not
have a  counterpart in  Colombeau theory. We look upon this fact as one more piece of evidence
supporting the point (advocated for a long time by the first author of this article) that  Robinson's field
$^\rho\mathbb{C}$ along with the algebra of asymptotic functions $^\rho\mathcal{E}(\Omega)$ are
better alternatives to  the ring of Colombeau's generalized scalars
$\overline{\mathbb{C}}$ and Colombeau's algebra of generalized functions $\mathcal{G}(\Omega)$ for the
purpose of non-linear theory of generalized functions and functional analysis in general.

 	The reader might observe some similarity between the field 
$^\rho\mathbb{R}$ (and $^\rho\mathbb{C}$ as well) and the fields of the $p$-adic numbers
$\mathbb{Q}_p$ (Ingleton~\cite{wIngleton}). This similarity is due to the fact
that $^\rho\mathbb{R}$,  $^\rho\mathbb{C}$ and 
$\mathbb{Q}_p$ are all ultra-metric spaces. For a discussion on this topic we
refer to (Luxemburg~\cite{wLux}). We should mention, however, that the fields
$^\rho\mathbb{R}$,  $^\rho\mathbb{C}$ and 
$\mathbb{Q}_p$ are quite different from each other. For example, each
$^\rho\mathbb{R}$ (just like 
$^*\mathbb{R}$) is a real closed, and thus, a totally ordered field. Also each
$^\rho\mathbb{C}$ (just like  $^*\mathbb{C}$) is a algebraically closed field. In contrast, the fields
$\mathbb{Q}_p$ are neither algebraically closed, nor real closed fields. In fact  $\mathbb{Q}_p$ are not even
real fields, that is to say that $\mathbb{Q}_p$ are non-orderable. Recall that a field
$\mathbb{K}$ is {\em orderable} \ifff 
$\mathbb{K}$ is  {\em real} in the sense that equations of the form
$x_1^2+x_2^2+\dots + x_n^2=0$ admit only trivial solutions  $x_1=x_2=\dots= x_n=0$ in $\mathbb{K}$
(Van Der Waerden~\cite{VanDerWaerden}, Chapter 11). Neither of the fields $\mathbb{Q}_p$  has this
property (Ribenboim~\cite{pRib}, pp.144-145). 

		 We start with some preliminaries: 

\begin{enumerate}
\item Let $\mathbb{K}$ be a subfield of $^\rho\mathbb{C}$. Let $V$ be a  vector space over $\mathbb{K}$
and let
$||\cdot||_v: V\to\mathbb{R}$ be a ultra-norm on $V$. The latter means that for every 
$x, y\in V$ and $c\in\mathbb{K}$, we have: {\bf (a)} $||x||_v\geq 0$ and $||x||_v=0$ occurs only if $x=0$;
{\bf (b)} $||c x||_v=
|c|_v\, ||x||_v$, where $|c|_v$ is defined in \#\ref{No: Ultra-Norm} in Definition~\ref{D: A. Robinson's
rho-Asymptotic Numbers}; {\bf (c)}
$ ||x+y||_v\leq\max\{||x||_v, ||y||_v\}$ (ultra-norm inequality). We denote by
$(V,
\mathbb{K}, ||\cdot ||_v)$ the corresponding {\bf ultra-normed vector space} over $\mathbb{K}$. Notice, in
particular, that if 
$V$ is an inner vector space over $\mathbb{K}$, then the formula
$||x||_v=\sqrt{|(x, x)|_v}$ defines a ultra-norm on $V$. Also, if $\mathbb{K}$ is an algebraically closed (or real
closed) field, then the formula $||x||_v=|\sqrt{(x, x)}|_v$ also produces a ultra-norm on $V$.

\item Let $V^*$ be the \textbf{algebraic dual} of $V$, i.e. the vector space over $\mathbb{K}$ of all
linear functionals $T: V\to {\mathbb{K}}$. We shall use the same notation, $||\cdot||_v$,  for the 
non-archimedean norm $||\cdot||_v: V^*\to\mathbb{R}\cup\{\infty\}$ inherited from $V$ by duality, i.e.
$||F||_v =\sup_{\overset{x\in V}{||x||_v=1}}\; |T(x)|_v$.
\item  $T\in V^*$ is called \textbf{continuous} if $||T||_v\in\mathbb{R}$ (i.e. if $||T||_v\not=\infty$). We
denote by
$V^\prime$ the vector space over $\mathbb{K}$ of all continuous functionals in $V^*$. 
Thus  $|T(x)|_v\leq ||T||_v\; ||x||_v\in\mathbb{R}$ holds for all $T\in V^\prime$ and all $x\in V$.
\end{enumerate}
	
	  Here is our Hahn-Banach extension principle. 
\begin{theorem}[Hahn-Banach]\label{T: Hahn-Banach}
Let
$\mathbb{K}$ be a subfield of
$^\rho\mathbb{C}$ which is spherically complete under the ultrametric on $^\rho\mathbb{C}$. Let 
$(V, \mathbb{K}, ||\cdot ||_v)$ be an ultra-normed vector
space over $\mathbb{K}$. Let
$U$ be a $\mathbb{K}$-linear subspace of
$V$. Then every functional 
$T\in U^\prime$ can be extended (non-uniquely) to a functional $M\in V^\prime$ such that
$||T||_v=||M||_v$.
\end{theorem}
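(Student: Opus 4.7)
The plan is to mimic Ingleton's $p$-adic Hahn-Banach argument, exploiting the fact that in an ultra-metric setting the extension problem reduces to a question about intersections of closed balls, which is exactly what spherical completeness handles. As in the classical (archimedean) Hahn-Banach theorem, I would first invoke Zorn's lemma on the poset of norm-preserving extensions of $T$ to linear subspaces $W$ with $U \subseteq W \subseteq V$, ordered by extension. The unions-of-chains step is routine (the norm is preserved because each restriction of the bound works), so the entire content lies in showing that a maximal element must have domain all of $V$, i.e.\ in the \emph{one-dimensional extension step}.

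So fix $x_0 \in V \setminus U$ and let me extend $T$ to $U \oplus \mathbb{K} x_0$. Any linear extension has the form $M(u + c x_0) = T(u) + c \alpha$ for some $\alpha \in \mathbb{K}$, and the constraint $\|M\|_v \le \|T\|_v$ becomes, after using homogeneity and replacing $u/c$ by $-w$, the requirement
\[
|\alpha - T(w)|_v \le \|T\|_v \cdot \|x_0 - w\|_v \quad \text{for every } w \in U.
\]
Equivalently, $\alpha$ must lie in the intersection of the family of closed balls
\[
B_w \;=\; \bigl\{\,z \in \mathbb{K} : |z - T(w)|_v \le \|T\|_v \cdot \|x_0 - w\|_v\,\bigr\}, \qquad w \in U,
\]
inside the ultra-metric space $\mathbb{K}$.

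Next I would verify the finite intersection property for $\{B_w\}_{w \in U}$. For any $w_1, w_2 \in U$, the continuity of $T$ on $U$ gives
\[
|T(w_1) - T(w_2)|_v = |T(w_1 - w_2)|_v \le \|T\|_v \cdot \|w_1 - w_2\|_v,
\]
and the ultra-norm inequality, applied to $w_1 - w_2 = (w_1 - x_0) - (w_2 - x_0)$, yields
\[
\|w_1 - w_2\|_v \le \max\{\|x_0 - w_1\|_v,\, \|x_0 - w_2\|_v\}.
\]
Hence the distance between the centers of $B_{w_1}$ and $B_{w_2}$ is at most the maximum of their two radii, which by the ultra-metric property is exactly the condition for the two closed balls to intersect (one in fact contains the other). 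A standard ultra-metric fact then promotes pairwise intersection of closed balls to FIP: in any finite subfamily $B_{w_1}, \dots, B_{w_n}$, the balls are totally ordered by inclusion, so the intersection equals the smallest one and is nonempty.

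Now spherical completeness of $\mathbb{K}$ is tailor-made for exactly this situation: the full family $\{B_w\}_{w \in U}$, having FIP, has nonempty intersection. Any $\alpha$ in this intersection produces the desired one-dimensional extension with the norm inequality preserved, completing the Zorn argument and the theorem. The only real subtlety I anticipate is being careful that the norm $\|T\|_v$, which a priori is an element of $\mathbb{R}$, is used as a legitimate scalar to define the radii $\|T\|_v \cdot \|x_0 - w\|_v \in \mathbb{R}$ for balls in $\mathbb{K}$ (whose ultra-norm takes values in $\mathbb{R}$), and that spherical completeness as formulated in Theorem~\ref{T: Luxemburg} indeed applies to balls with arbitrary real radii; the proof in Luxemburg~\cite{wLux} does cover this case, so no difficulty arises.
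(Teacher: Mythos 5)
Your proposal is correct and follows essentially the same route as the paper's own proof: reduce to the one-dimensional extension step, observe that an admissible value $\alpha=M(x_0)$ is exactly a point in the intersection of the closed balls centered at $T(w)$ with radii $\|T\|_v\,\|x_0-w\|_v$, verify the finite intersection property via the ultra-norm inequality (pairwise intersection forces nesting), invoke spherical completeness of $\mathbb{K}$, and finish with Zorn's lemma. The only cosmetic difference is that you set up the Zorn argument first and the paper does it last, which changes nothing of substance.
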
 
\begin{proof} The above theorem is a particular case of A. W. Ingleton's result in \cite{wIngleton}. A
similar independent proof follows: If $U=V$ there is nothing to prove. Suppose
$x_0\in V\setminus U$ and let
$U+\mathbb{K}\, x_0=\{x+c x_0 : x\in U, c\in\mathbb{K}\}$. Our first goal is to extend $T$ to a
functional
$S$ on $U+\mathbb{K}\, x_0$. We let
$S(x+c x_0)= T(x)+c S(x_0)$. To complete this
definition we need to prescribe a value $S(x_0)=y_0$ which preserves the $v$-norm of
$T$. For any $x\in U$ we define $R(x)=:||T||_v\; ||x-x_0||_v$ and the closed ball $\mathcal{B}_x=
\{y\in{\mathbb{K}} : |y-T(x)|_v\leq R(x)\}$.
The family $\{\mathcal{B}_x\}_{x\in U}$ has the finite intersection property. Indeed, suppose that
$x_1, x_2\in U$ and $x_1\not= x_2$. By involving the ultra-norm
inequality, we obtain: $ |T(x_1)-T(x_2)|_v\leq ||T||_v\; ||x_1-x_2||_v\leq ||T||_v\; \max\{ || x_1-x_0||_v, \, ||
x_0-x_2||_v\}=\max\{R(x_1),\, R(x_2)\}$. If $R(x_1)\leq R(x_2)$, then 
$T(x_1)\in\mathcal{B}_{x_2}$. If  $R(x_1)\geq R(x_2)$, then 
$T(x_2)\in\mathcal{B}_{x_1}$.  Since $T(x_1)$ and 
$T(x_2)$ are the centers of the balls $\mathcal{B}_{x_1}$ and $\mathcal{B}_{x_2}$,
respectively, it follows that either $T(x_1)\in\mathcal{B}_{x_1}\cap\mathcal{B}_{x_2}$, or
$T(x)\in\mathcal{B}_{x_1}\cap\mathcal{B}_{x_2}$. In either case we have
$\mathcal{B}_{x_1}\cap\mathcal{B}_{x_2}\not=\varnothing$, as required. Notice that the latter
implies either $\mathcal{B}_{x_1}\subset\mathcal{B}_{x_2}$, or
$\mathcal{B}_{x_1}\supset\mathcal{B}_{x_2}$ due to  the ultra-norm inequality, hence the argument can be
repeated for any finite number of elements in $U$. Thus there exists
$y_0\in\bigcap_{x\in U}\mathcal{B}_x$ since
$\mathbb{K}$ is spherically complete by assumption. We let
$S(x_0)=y_0$ and the definition of
$S$ is complete. Clearly $S$ is an extension of $T$. To
show the preservation of the norm of $T$, observe that $||T||_v\leq||S||_v$ trivially. To show
$||T||_v\geq||S||_v$, suppose that $x-c x_0\in U+\mathbb{K}\, x_0$.  Notice that 
$|y_0-T(x)|_v\leq R_x$ for all $x\in U$, by the choice of $y_0$. 
With this in mind, we estimate
$|S(x-cx_0)|_v=|c|_v\,|S(x/c-x_0)|_v=|c|_v\, |S(x/c)-S(x_0)|_v=|c|_v\, |T(x/c)-y_0|_v\leq|c|_v\, R(x/c)=
|c|_v\, ||T||_v\, ||x/c-x_0||_v=||T||_v\, ||x-cx_0||_v$. Thus $||T||_v=||S||_v$, as required. The rest of the proof is a
typical application of Zorn's lemma. Let
$\mathcal{L}_T$ denote the set of all extensions of $T$ which preserves the $v$-norm of $T$. Notice that
$\mathcal{L}_T\not=\varnothing$ since $S\in\mathcal{L}_T$. If 
$T_1, T_2\in\mathcal{L}_T$ and  if $T_2$ is an extension of $T_1$, we shall write $T_1\prec T_2$ and also
$T_1\vee T_2=T_2$. Let $\mathcal{L}$ be a subset of $\mathcal{L}_T$ which is totally ordered
under $\prec$. We observe that $\bigvee_{L\in\mathcal{L}} L\in\mathcal{L}_T$. Thus,
by Zorn's lemma, $\mathcal{L}_T$ has maximal elements; let $M$ be a such maximal element of
$\mathcal{L}_T$. The functional $M$ is an extension of $T$ and $||T||_v = ||M||_v$, by the definition of
$\mathcal{L}_T$.  The domain of $M$ should be $V$. Suppose (on the contrary) that the domain ${\rm
dom}(M)$ of $M$ is a proper subset of $V$. Then we can, as before, choose $z_0\in V\setminus \dom(M)$
and extend $M$ to $\dom(M)+\mathbb{K}z_0$, contradicting the maximality of $M$. 
\end{proof} 

\begin{example}[Power Series] Let  $\mathbb{C}\left<x\right>$ be the Levi-Civita field consisting of all
formal series of the form $\sum_{n=0}^\infty a_n x^{r_n}$, where $a_n\in\mathbb{C}$ and  $(r_n)$ is a
strictly increasing unbounded sequence in
$\mathbb{R}$ (Levi-Civita~\cite{tLC}). The field $\mathbb{C}\left<x\right>$ is isomorphic to the {\em field
of algebraic functions in one variable} in the sense that  $\mathbb{C}\left<x\right>$ is an algebraic
closure of the field of rational functions $\mathbb{C}(x)$. The field
$\mathbb{C}\left<x\right>$ is spherically complete  (Luxemburg~\cite{wLux}) and it can be
embedded in $^\rho\mathbb{C}$ by the mapping $\sum_{n=0}^\infty a_n
x^{r_n}\to\sum_{n=0}^\infty a_n
\rho^{r_n}$ (cf. Robinson~\cite{aRob73} or Lightstone \& Robinson~\cite{LiRob}). The above
Hahn-Banach extension principle holds for its image
$\mathbb{K}=\mathbb{C}\left<\rho\right>$.  For more examples of spherically complete algebraically
closed and real closed subfields $\mathbb{K}$ of 
$^\rho\mathbb{C}$, we refer to  (Todorov \& Wolf~\cite{TodWolf}).
\end{example}

 The next result does not have a counterpart in  Colombeau theory (Vernaeve~\cite{hVernaeve}) since
$\overline{\mathbb{R}}$ and $\overline{\mathbb{C}}$  are rings with zero  divisors.

\begin{corollary}[The Case $\mathbb{K}={^\rho\mathbb{C}}$]\label{C: The Case K=rhoC} Let 
$(V,\,  {^\rho\mathbb{C}},\,  ||\cdot ||_v)$ be a ultra-normed vector space over the field $^\rho\mathbb{C}$.
Let $U$ be a $^\rho\mathbb{C}$-linear subspace of\, $V$. Then every functional 
$T\in U^\prime$ can be extended (non-uniquely) to a functional $M\in V^\prime$ such that $||T||_v=||M||_v$. A
similar result holds about any ultra-normed vector space $(V,\,  {^\rho\mathbb{R}},\,  ||\cdot ||_v)$ over the
field $^\rho\mathbb{R}$.
\end{corollary}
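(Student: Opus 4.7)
The plan is to observe that Corollary~\ref{C: The Case K=rhoC} is essentially an instance of Theorem~\ref{T: Hahn-Banach} once we check the hypotheses for the two special choices $\mathbb{K}={^\rho\mathbb{C}}$ and $\mathbb{K}={^\rho\mathbb{R}}$. Each of these is trivially a subfield of $^\rho\mathbb{C}$ (the ambient field in the statement of the main Hahn--Banach theorem, and for the real case a subfield via the canonical decomposition $^\rho\mathbb{C}={^\rho\mathbb{R}}(i)$ from Theorem~\ref{T: Field Properties}). So the only real content to verify is spherical completeness of each of these fields with respect to the ultrametric $d_v$ induced by $|\cdot|_v$ of Definition~\ref{D: A. Robinson's rho-Asymptotic Numbers}, \#\ref{No: Ultra-Norm}.

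First I would invoke Luxemburg's theorem (Theorem~\ref{T: Luxemburg}) which states precisely that $^\rho\mathbb{R}$ is spherically complete and, as a consequence, so is $^\rho\mathbb{C}$. Thus the hypothesis of Theorem~\ref{T: Hahn-Banach} that ``$\mathbb{K}$ is a subfield of $^\rho\mathbb{C}$ which is spherically complete under the ultrametric on $^\rho\mathbb{C}$'' is fulfilled in both cases. Applying Theorem~\ref{T: Hahn-Banach} with $\mathbb{K}={^\rho\mathbb{C}}$ to the ultra-normed space $(V,\,{^\rho\mathbb{C}},\,||\cdot||_v)$ produces the desired extension $M\in V^\prime$ of $T\in U^\prime$ with $||T||_v=||M||_v$, giving the first assertion of the corollary.

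For the second assertion, I would apply Theorem~\ref{T: Hahn-Banach} again, this time with $\mathbb{K}={^\rho\mathbb{R}}$, to any given ultra-normed vector space $(V,\,{^\rho\mathbb{R}},\,||\cdot||_v)$ and any $^\rho\mathbb{R}$-linear subspace $U\subseteq V$. Since $^\rho\mathbb{R}$ is itself spherically complete by Theorem~\ref{T: Luxemburg}, the hypotheses are once more satisfied and the conclusion follows verbatim.

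There is essentially no obstacle in this proof: the hard analytic work was done inside Theorem~\ref{T: Hahn-Banach} (the transfinite construction via Zorn's lemma using the spherical completeness of $\mathbb{K}$) and inside Theorem~\ref{T: Luxemburg}. The only step that one might worry about is whether applying the general theorem literally ``costs'' anything in the specialization --- for instance, whether the ultrametric on $\mathbb{K}={^\rho\mathbb{R}}$ viewed as a subfield of $^\rho\mathbb{C}$ agrees with its intrinsic ultrametric --- but this is immediate from the definition of $|\cdot|_v$ in Definition~\ref{D: A. Robinson's rho-Asymptotic Numbers}, which makes no use of complex structure. Hence the corollary follows without any additional work beyond quoting the two results.
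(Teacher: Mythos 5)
Your proposal is correct and follows the paper's own argument: the corollary is obtained by applying Theorem~\ref{T: Hahn-Banach} with $\mathbb{K}={^\rho\mathbb{C}}$ and $\mathbb{K}={^\rho\mathbb{R}}$, whose spherical completeness is supplied by Luxemburg's result (Theorem~\ref{T: Luxemburg}). The extra check you mention, that the ultrametric on ${^\rho\mathbb{R}}$ as a subfield of ${^\rho\mathbb{C}}$ is the intrinsic one, is a harmless (and correct) addition not spelled out in the paper.
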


\begin{proof}  Since both $^\rho\mathbb{C}$ and $^\rho\mathbb{R}$ are spherically complete fields 
 (cf. Luxemburg [22], p. 195 or Theorem 7.6 in this article), we can apply the above theorem for 
$\mathbb{K}={^\rho\mathbb{C}}$ and
$\mathbb{K}={^\rho\mathbb{R}}$, respectively.  
\end{proof} 

{\bf Acknowledgment:} The authors thank Michael Oberguggenberger for the useful discussion on
preliminary versions of this article. The first author is grateful to the colleagues at the 
University of Vienna and especially to Michael Kunzinger for the hospitality and financial
support in the Spring of 2006, where the work on this article began.

\end{document}